\numberwithin{equation}{section}
\theoremstyle{definition}
\newtheorem{Definition}{Definition}[section]
\newtheorem{Example}[Definition]{Example}
\newtheorem{Remark}[Definition]{Remark}
\theoremstyle{plain}
\newtheorem{Theorem}[Definition]{Theorem}
\newtheorem{Proposition}[Definition]{Proposition}
\newtheorem{Lemma}[Definition]{Lemma}
\newcommand{\bDiamond}{\mathbin{\Diamond}}
\newcommand\bigDiamond{\mathop{\mathpalette\bigDi@mond\relax}}
\newcommand\bigDi@mond[2]{%
	\vcenter{\hbox{\m@th
			\scalebox{\ifx#1\displaystyle 2\else1.2\fi}{$#1\Diamond$}%
	}}%
}
\newcommand{\catname}[1]{\mathbf{#1}}
\newcommand{\al}{\alpha}
\newcommand{\be}{\beta}
\newcommand{\la}{\lambda}
\newcommand{\si}{\sigma}
\newcommand{\Z}{\mathbb{Z}}
\newcommand{\R}{\mathbb{R}}
\newcommand{\C}{\mathbb{C}}
\newcommand{\Fgl}{\mathfrak{gl}}
\newcommand{\Fsl}{\mathfrak{sl}}
\newcommand{\Fosp}{\mathfrak{osp}}
\newcommand{\Fg}{\mathfrak{g}}
\newcommand{\Fh}{\mathfrak{h}}
\newcommand{\Fk}{\mathfrak{k}}
\newcommand{\Fl}{\mathfrak{l}}
\newcommand{\Fn}{\mathfrak{n}}
\newcommand{\Fp}{\mathfrak{p}}
\newcommand{\FG}{\mathfrak{G}}
\newcommand{\FH}{\mathfrak{H}}
\newcommand{\FN}{\mathfrak{N}}
\newcommand{\CA}{\mathcal{A}}
\newcommand{\CC}{\mathcal{C}}
\newcommand{\CL}{\mathcal{L}}
\newcommand{\op}{\operatorname}
\DeclareMathOperator{\End}{End}
\DeclareMathOperator{\Id}{Id}
\DeclareMathOperator{\U}{U}
\renewcommand{\hat}{\widehat}
\renewcommand{\tilde}{\widetilde}
\renewcommand{\mod}[1]{\;\text{(mod $#1$)}}
\newcommand{\antiZ}{\reflectbox{$Z$}}
\newcommand{\ghostZ}{\mathghost} 
\def\ifemptyarg#1{%
	\if\relax\detokenize{#1}\relax 
	\expandafter\@firstoftwo
	\else
	\expandafter\@secondoftwo
	\fi}
\def\ifemptyarg#1{%
		\if\relax\detokenize{#1}\relax
	\expandafter\@firstoftwo
	\else
	\expandafter\@secondoftwo
	\fi}
\title{Ghost center and representations of the diagonal reduction algebra of $\Fosp(1|2)$}
\author{Jonas T. Hartwig \and Dwight Anderson Williams II}
\date{March 30, 2022}
\address{Department of Mathematics, Iowa State University, Ames, IA-50011, USA}
\email{jth@iastate.edu}
\urladdr{http://jthartwig.net}
\address{MathDwight, The Bronx, New York, NY-10462, USA}
\email{dwight@mathdwight.com}
\urladdr{https://mathdwight.com}
\begin{document}
\maketitle
\begin{abstract}
Reduction algebras are known by many names in the literature, including step algebras, Mickelsson algebras, Zhelobenko algebras, and transvector algebras, to name a few.
These algebras, realized by raising and lowering operators, allow for the calculation of Clebsch-Gordan coefficients, branching rules, and intertwining operators; and have connections to extremal equations and dynamical R-matrices in integrable face models.

In this paper we continue the study of the diagonal reduction superalgebra $A$ of the orthosymplectic Lie superalgebra $\Fosp(1|2)$. We construct a Harish-Chandra homomorphism, Verma modules, and study the Shapovalov form on each Verma module. Using these results, we prove that the ghost center (center plus anti-center) of $A$ is generated by two central elements and one anti-central element  (analogous to the Scasimir due to Le\'{s}niewski for $\Fosp(1|2)$). As another application, we classify all finite-dimensional irreducible representations of $A$. Lastly, we calculate an infinite-dimensional tensor product decomposition explicitly.         
\end{abstract}


\section{Introduction}

The structure and representations of reduction algebras \cite{zhelobenkoRepresentationsReductiveLie1994} are useful for solving problems in representation theory of Lie algebras.
Specifically, they are designed as a tool for the construction of explicit intertwining operators between representations related to branching rules. 
But reduction algebras, and the closely connected notion of extremal projectors \cite{asherovaProjectionOperatorsSimple1973}, have also been shown to have a natural interplay with solutions to the Arnaudon-Buffenoir-Ragoucy-Roche (ABRR) equation \cite{khoroshkinExtremalProjectorDynamical2004}. The ABRR equation in turn can be used to create solutions to the dynamical Yang-Baxter equation, which expresses the integrability of face models in statistical mechanics \cite{Felder:1994be,felder1995conformal}.

Another application of these algebras we wish to highlight is that they can be represented by linear operators on spaces of solutions to many systems of equations which admit the superposition principle, including (relativistic) wave equations like the Dirac equation and Maxwell's equations, or other equations governing physical phenomena. 
Zhelobenko \cite{zhelobenkoHypersymmetriesExtremalEquations1997} provides a guide  to the general construction of a representation of a reduction algebra where the representation space appears as the solution space to a system of equations:

Begin with a system of equations $\epsilon v = 0$ of which solutions $v$ live in the solution space $V^{\epsilon}$ with $\epsilon \subset \CA$ denoting a subset of operators in an associative algebra $\CA$. That is, $e v = 0$ for every $e \in \epsilon$. Take for example $\epsilon$ to be the singelton containing the Laplacian $\Delta$ and the $m$th Weyl algebra $A_{m}$ as $\CA$ so that $V^{\epsilon}$ is the space of harmonic polynomials within the irreducible $A_{m}$-module $V = \R[x_{1}, x_{2}, \ldots x_{m}]$. The left ideal $I_{\epsilon} \subset \CA$ generated by $\epsilon$ acts in $V^{\epsilon}$ by annihilation. Furthermore, if we let $N(I_{\epsilon})$ be the largest subalgebra of $\CA$ in which  $I_{\epsilon}$ is a two-sided ideal (equivalently, $N(I_{\epsilon}) = \{n \in \CA \mid \epsilon n \subset I_{\epsilon}  \}$ is the normalizer of $I_{\epsilon}$ in $\CA$), then the space of solutions is stable under the action of the quotient algebra $N(I_{\epsilon})/I_{\epsilon}$. Indeed, $\epsilon(n+I_{\epsilon})v \subset I_{\epsilon}v = 0$, for all $n$ in $N(I_{\epsilon})$. The steps above form what Zhelobenko calls the \emph{translator algebra} $N(I_{\epsilon})/I_{\epsilon}$ or the Mickelsson algebra \cite{mickelssonStepAlgebrasSemisimple1973,vandenhomberghNoteMickelssonStep1975,van1976harisch} when $\epsilon$ is the span of the positive root vectors in a semi-simple Lie algebra $\Fg \xhookrightarrow{} U(\Fg)  \xhookrightarrow{}\CA$, equating $V^{\epsilon}$ with the space $V^{+}$ of singular (also known as primitive) vectors. These are not entirely separate considerations. Continuing the example of $\epsilon = \{\Delta\}$, one can embed $U(\Fsl(2))$ into a Weyl algebra using an $\Fsl(2)$-triple of linearly transformed versions of the Laplacian, the Euler operator, and the squared norm in $\R^{m}$. 

One defines the reduction algebra to be a localized version of the Mickelsson algebra \cite{zhelobenkoRepresentationsReductiveLie1994}. This is done to facilitate the utilization of the extremal projector \cite{asherovaProjectionOperatorsSimple1973}. The process of passing from $V$ to $V^\epsilon$ is categorical in nature in that, under cases we consider, the space of solutions $V^{\epsilon}$ is an irreducible module of the reduction algebra whenever $V$ is an irreducible $\CA$-module. In the non-localized setting of enveloping algebras this fact was first conjectured by Mickelsson \cite{mickelssonStepAlgebrasSemisimple1973} and proved by van den Hombergh \cite{vandenhomberghNoteMickelssonStep1975}. Returning to $\epsilon = \{\Delta\}$, we conclude that the space of harmonic polynomials is an irreducible module (generated by $f \equiv 1$) over the reduction algebra described above by $\Fsl(2)  \xhookrightarrow{} U(\Fsl(2))  \xhookrightarrow{} A_{m}$.

Excitingly, the study of reduction \emph{super}algebras also achieves results in the representation theory of Lie \emph{super}algebras \cite{matsumotoRepresentationsCentrallyExtended2014}. For example, in this paper we illustrate how one can use reduction algebras to decompose certain (infinite-dimensional) tensor product representations of $\Fosp(1|2)$. One can also recover tables of Clebsch-Gordan coefficients of the Lie superalgebra $\Fosp(1|2)$ \cite{scheunertIrreducibleRepresentationsOsp1977,bergeronGeneratingFunctionsMathfrakosp2016} in this way. As an application of extremal projectors, \cite{Tolstoy:2004tp} gives an overview of reduction superalgebras that have been described for some low corank reductive pairs of Lie superalgebras; however, keeping in mind the opening paragraphs and the connections that go beyond the intrigue of branching rules, representations of a given reduction (super)algebra intersect a broad interest and should be considered independently of their origin.
Yet it seems that only recently the structure and representation theory of reduction superalgebras have been investigated in the literature in a manner comparable to the general linear algebra case found in \cite{khoroshkinMickelssonAlgebrasZhelobenko2008, khoroshkinStructureConstantsDiagonal2011}.

In \cite{hartwigDiagonalReductionAlgebra2022a}, we determined generators and relations of the diagonal reduction superalgebra $A$ of $\Fosp(1|2)=\Fosp(1|2, \C)$ related to the diagonal embedding of $\Fosp(1|2)$ into its Cartesian square. The present paper continues an investigation of this orthosymplectic reduction superalgebra, addressing two fundamental goals: 
    \begin{enumerate}[{\hspace{.25in}\rm {Goal} 1:}]
        \item Describe the (ghost) center of $A$. \label{Goal1}
        \item Classify finite-dimensional irreducible  $A$-representations. \label{Goal2}
    \end{enumerate}
     
     From their examination of a classical diagonal reduction algebra \cite{khoroshkinDiagonalReductionAlgebra2017}, Khoroshkin and Ogievetsky provide two distinct families of central elements of the $\Fgl(n)$-type diagonal reduction algebra, and they leave to conjecture a full characterization of the center in general. In this paper we present a full characterization of the ghost center (which includes the center) of the diagonal reduction algebra of $\Fosp(1|2)$. Furthermore, we classify all finite-dimensional irreducible $A$-representations up to isomorphism. Along the way, we construct analogues of Verma modules for $A$ and study their respective Shapovalov forms, including the computation of their determinants.
     We define these structures over a PID of ``dynamical scalars'', $R=\C[H][(H-n)^{-1}\mid n\in\Z]$, which is a localized enveloping algebra of the $1$-dimensional Cartan subalgebra of $\Fosp(1|2)$. This is a parallel approach to so called \emph{dynamical representations} defined by Etingof and Varchenko \cite{etingofDynamicalWeylGroups2002}. 

\subsection{Ghost center}

The ghost center of a Lie superalgebra $\Fg$ was introduced by Gorelik in \cite{gorelikGhostCentreLie2000} as the direct sum of invariants under the standard and twisted adjoint actions \cite{arnaudonCasimirGhost1997}, respectively, within the universal enveloping algebra $U(\Fg)$ of $\Fg$. Equivalently, the ghost center is the center plus \emph{anti-center} of $U(\Fg)$. 
The even part of the anti-center of an associative superalgebra $B$ is defined as the set of even elements that commute with even elements of $B$ and anti-commute with odd elements of $B$. The odd part of the anti-center of $B$ comprises odd elements of $B$ that commute with all of $B$.
The ghost center carries a similar significance in super representation theory of Lie superalgebras, especially when studying $B(0,n$) \cite{kacLieSuperalgebras1977}, as the center holds in the representation theory of Lie algebras. 
Since the reduction algebra $A$ is an associative superalgebra, and a subquotient of an extension of a universal enveloping algebra of a Lie superalgebra, it is natural to pursue Goal \ref{Goal1} as a route towards Goal \ref{Goal2}. 

\subsection{Main results} 
 Let $\Z_{2}$ stand for the two-element field $\Z/2\Z$. The standard ground field will be the field $\C$ of complex numbers. A summary of the main results and the section containing each follows: 

In Section \ref{sec:GhostCenter} we deliver positive news on the conclusion of Goal \ref{Goal1}. The section includes a proof that the ghost center of $A$ is a subalgebra of $\C[x, y]$. Formally:
\begin{Theorem}[{Ghost center of $A$}] \label{thm:MainTheorem1}
The ghost center of the diagonal reduction algebra of $\Fosp(1|2)$ is isomorphic to the the subalgebra of $\C[x,y]$ generated by $x^2+y^2$, $2xy$, and $x^2-y^2$. An isomorphism $\phi$ is explicitly given by
\begin{align*}
\phi\big(C^{(1)}\big) = 2xy,\qquad
\phi\big(C^{(2)}\big) = x^2+y^2,\qquad
\phi\big(Q^{(2)}\big) = x^2-y^2,
\end{align*}
where $C^{(1)}$ and $C^{(2)}$ (defined in \eqref{eq:CCnegdiam}-\eqref{eq:CCposdiam}) are, up to scaling, the respective images in $A$ of $C\otimes 1 - 1\otimes C$ and $C\otimes 1+1\otimes C$, $C$ being the Casimir of $\Fosp(1|2)$; and, $Q^{(2)}$ (defined in \eqref{eq:Qdiam}) is, up to scaling, the image in $A$ of $Q\otimes Q$, where $Q$ is the Scasimir of $\Fosp(1|2)$. Moreover if we equip $\C[x,y]$ with a $\Z_2\times\Z_2$-grading given by 
\[\C[x,y]_{(\bar 0,b)}=\{f(x,y)\in\C[x,y]\mid f(x,y)=(-1)^bf(y,x)\},\quad \C[x,y]_{(\bar 1,b)}=0,\quad b\in\Z_2,\]
$\phi$ is a graded isomorphism, with respect to the $\Z_2\times\Z_2$-grading on the ghost center (see Section \ref{sec:return-ghost-center}).
\end{Theorem}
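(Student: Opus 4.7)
The plan is to realize the ghost center of $A$ inside a two-variable polynomial ring via a Harish-Chandra-type homomorphism coming from the action on the Verma modules constructed earlier in the paper, and then to pin down its image exactly. First I would verify by direct computation in $A$ that $C^{(1)}$ and $C^{(2)}$ lie in the center and that $Q^{(2)}$ lies in the even part of the anti-center; since these elements originate (up to scaling) from $C\otimes 1\pm 1\otimes C$ and $Q\otimes Q$, where $C$ and $Q$ are the Casimir and Scasimir of $\Fosp(1|2)$, this reduces to the known (anti)centrality in $U(\Fosp(1|2))^{\otimes 2}$ after checking compatibility of the images with super-commutation against the diagonal embedding. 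Together with the polynomial identity $(x^2+y^2)^2-(x^2-y^2)^2=(2xy)^2$ in $\C[x,y]$ and its counterpart $\bigl(C^{(2)}\bigr)^2-\bigl(Q^{(2)}\bigr)^2=\bigl(C^{(1)}\bigr)^2$ in $A$, this makes $\phi$ a well-defined graded homomorphism onto the stated subalgebra.

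For injectivity I would apply the Harish-Chandra homomorphism built from the Verma modules $M(\lambda)$ of the preceding sections. A central element acts on $M(\lambda)$ by a scalar $\chi_\lambda\in R$, and an even anti-central element acts by $\pm\chi_\lambda^{+}$ according to parity; assembling these yields a map from the ghost center into a polynomial ring in two variables $x,y$ corresponding to eigenvalues (after extracting suitable square roots) of the two copies of the Casimir. The crucial input---supplied by the Shapovalov form whose determinant is computed earlier in the paper---is that a ghost-central element whose characters vanish identically must itself be zero, which makes the Harish-Chandra map injective on the ghost center. Computing directly the action of $C^{(1)}$, $C^{(2)}$, $Q^{(2)}$ on a highest-weight vector then identifies their images with $2xy$, $x^2+y^2$, $x^2-y^2$, respectively.

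Surjectivity uses the $\Z_2\times\Z_2$-grading: the first factor is the usual super-grading, which eliminates the odd part of $\C[x,y]$; the second factor comes from the $\Z_2$-symmetry $C\otimes 1\leftrightarrow 1\otimes C$ of the diagonal embedding and distinguishes central (symmetric, $b=\bar 0$) from even anti-central (antisymmetric, $b=\bar 1$) images. A weight-by-weight comparison, using the PBW-type basis of $A$ from \cite{hartwigDiagonalReductionAlgebra2022a} and the symmetric/antisymmetric decomposition of $\C[x,y]$, then matches the graded components of the Harish-Chandra image of the ghost center with the full subalgebra generated by $2xy$, $x^2+y^2$, $x^2-y^2$, completing the theorem.

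The hardest step is injectivity of the Harish-Chandra map in the super setting: one must carefully track even-versus-odd parities on Verma modules, verify that the Shapovalov form is nondegenerate at sufficiently many $\lambda$ to separate characters of central and anti-central elements simultaneously, and rule out any ghost-central element that acts trivially on every Verma but is nonzero in $A$. Once this is in place, well-definedness, injectivity, and graded surjectivity proceed along the lines above.
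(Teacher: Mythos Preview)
Your overall architecture is right: define a Harish-Chandra map, prove it is injective on the ghost center using Verma modules, and then compute the images of $C^{(1)}$, $C^{(2)}$, $Q^{(2)}$. But the decisive step---showing that the Harish-Chandra image of the ghost center is \emph{contained} in the subalgebra generated by $x^2+y^2$, $2xy$, $x^2-y^2$---is where your proposal has a genuine gap. You propose to obtain this upper bound from the tensor-swap symmetry $C\otimes 1\leftrightarrow 1\otimes C$, interpreted as $x\leftrightarrow y$ on the target. This does not work. First, the swap automorphism of $\Fosp(1|2)\times\Fosp(1|2)$ fixes the diagonal and negates $\tilde h$, so on the Harish-Chandra target $R[h]$ (with $x=H-1$, $y=h$) it acts as $(x,y)\mapsto(x,-y)$, not $(x,y)\mapsto(y,x)$; under this involution $C^{(1)}$ (which is central) is anti-invariant, so it cannot be the grading that separates center from anti-center. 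Second, even a single $\Z_2$ symmetry is not enough: the subring of $\C[x,y]$ fixed by $x\leftrightarrow y$ is all of $\C[x+y,xy]$, strictly larger than $\C[x^2+y^2,2xy]$. You need invariance under a group of order four (including $(x,y)\mapsto(-x,-y)$), and your argument supplies no source for the second generator.

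The paper obtains this missing constraint not from any external automorphism but from the internal representation theory: using the technical lemma on $\hat F_n(H,\hat h)$, one locates singular vectors $x_{-\alpha}^n v_\lambda$ in Verma modules for special $\lambda$ and compares two computations of $z.x_{-\alpha}^n v_\lambda$ (once via ghost-centrality, once via the Harish-Chandra projection at the new singular vector). This yields a functional equation $z_0(H+n,\hat\lambda)=(-1)^{\{z\}}z_0(H,\hat\lambda)$ for infinitely many odd $n$ and both signs of $\hat\lambda$, which is then reinterpreted as (relative) invariance under an order-four group $G\subset\Aut\,\C(H,h)$ whose invariant ring is exactly $\C[(H-1)^2+h^2,(H-1)h]$. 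That is the step your outline is missing; the ``weight-by-weight comparison'' you describe cannot substitute for it. (Minor point: injectivity in the paper also rests on the $F_n$ lemma directly, not on the Shapovalov determinant.)
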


In Section \ref{sec:FDreps} we show that finite-dimensional irreducible $A$-representations arise as quotients of Verma modules defined in Section \ref{ssec:Verma}. Theorem \ref{thm:MainTheorem2}, proved in Lemma \ref{lem:action-of-ghost-center} and Theorem \ref{thm:L-lambda-mu}, completes Goal \ref{Goal2}.
\begin{Theorem}[{Finite-dimensional irreducible $A$-representations}]\label{thm:MainTheorem2}
\text{}
\begin{enumerate}[{\rm (i)}]
    \item For each $n\in 2\Z_{\ge 0}+1$ and each pair $(\la,\mu)\in\C\times(\C\setminus\Z)$ satisfying \begin{equation}\label{eq:lambda-mu-n}
    \la^2=(\mu+n)^2,
    \end{equation}
    there is an irreducible representation $L(\la,\mu)$ of dimension $n$.
    \item Each finite-dimensional irreducible $A$-representation $V$ is odd-dimensional and isomorphic to $L(\la,\mu)$ for a unique pair $(\la,\mu)\in\C\times(\C\setminus\Z)$ satisfying \eqref{eq:lambda-mu-n} where $n=\dim V$.
    \item The action of the ghost center on $L(\la,\mu)$ is given by
    \begin{equation}
        C^{(1)} \mapsto 2\la\mu,\quad C^{(2)}\mapsto \mu^2+\lambda^2,\quad     Q^{(2)}\mapsto (\mu^2-\lambda^2)(-1)^{|\cdot|},
    \end{equation}
    where $(-1)^{|\cdot|}\in\End_\C\big(L(\la,\mu)\big)$ denotes the parity sign function defined on homogeneous vectors by $v\mapsto (-1)^{|v|}$. 
\end{enumerate}
\end{Theorem}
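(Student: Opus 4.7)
The plan is to apply highest-weight theory for $A$ over the dynamical PID $R=\C[H][(H-n)^{-1}\mid n\in\Z]$ developed in the preceding sections. For each pair $(\lambda,\mu)\in\C\times(\C\setminus\Z)$, one builds a Verma module $M(\lambda,\mu)$ whose generating highest-weight vector $v_0$ satisfies $H v_0=\mu v_0$ together with the character values $C^{(1)} v_0=2\lambda\mu\, v_0$ and $C^{(2)} v_0=(\mu^2+\lambda^2)v_0$ dictated by Theorem A. The unique maximal proper submodule is the radical of the Shapovalov form on $M(\lambda,\mu)$, and the irreducible quotient is the desired $L(\lambda,\mu)$.

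For part (i), I would compute the Shapovalov form in each weight component using PBW-like bases for $M(\lambda,\mu)$ coming from the defining relations of $A$, and identify its determinant. Combined with the ghost-center analysis, this determinant should factor, up to invertible scalars in $R$, as a product $\prod_k\bigl(\lambda^2-(\mu+k)^2\bigr)$ over odd positive integers $k$; the parity here reflects the super-root structure of $\Fosp(1|2)$, where a singular vector can only occur at an odd level. When $\lambda^2=(\mu+n)^2$ with $n\in 2\Z_{\ge 0}+1$, the form develops a one-dimensional kernel at level $n$ generated by a singular vector, and quotienting yields an irreducible module of dimension exactly $n$. The constraint $\mu\notin\Z$ ensures all denominators in $R$ remain invertible on $M(\lambda,\mu)$.

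For part (ii), given any finite-dimensional irreducible $A$-representation $V$, the raising generators must act locally nilpotently, so there exists a nonzero highest-weight vector $v_0\in V$ of definite $H$-eigenvalue $\mu$. Since $V$ is already an $A$-module and $A$ incorporates the localization at $H-n$, the value $\mu$ must avoid $\Z$. Reading off $\lambda$ from $C^{(1)} v_0=2\lambda\mu\, v_0$ produces a surjection $M(\lambda,\mu)\twoheadrightarrow V$ which necessarily factors through $L(\lambda,\mu)$, so $V\cong L(\lambda,\mu)$; finite-dimensionality then forces the Shapovalov form to have a nontrivial kernel, hence $\lambda^2=(\mu+n)^2$ for the unique $n\in 2\Z_{\ge 0}+1$ with $\dim V=n$.

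The hard part I anticipate is establishing that every finite-dimensional irreducible has \emph{odd} dimension. My approach is to exploit the anti-central element $Q^{(2)}$ from Theorem A: on an irreducible $\Z_2$-graded $A$-module, a super-version of Schur's lemma forces $Q^{(2)}$ to act as a scalar multiple of the parity sign $(-1)^{|\cdot|}$. Combined with the $H$-weight decomposition, in which adjacent weight spaces carry opposite $\Z_2$-parity, this compatibility forces the total dimension to be odd. Once (i) and (ii) are in place, part (iii) follows directly from the Harish-Chandra-type isomorphism $\phi$ of Theorem A: evaluating $\phi(C^{(1)})=2xy$, $\phi(C^{(2)})=x^2+y^2$, and $\phi(Q^{(2)})=x^2-y^2$ on a highest-weight vector under the specialization $(x,y)\mapsto(\lambda,\mu)$ yields the scalars $2\lambda\mu$, $\mu^2+\lambda^2$, and $\mu^2-\lambda^2$ respectively, while the extra factor $(-1)^{|\cdot|}$ on $Q^{(2)}$ is the signature of its anti-central (rather than central) character.
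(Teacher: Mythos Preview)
Your overall architecture---Verma modules, Shapovalov form, irreducible quotients, ghost-center scalars---matches the paper's approach, and your determination of $\lambda$ from the central character of $C^{(1)}=2\hat h$ is a clean way to pin down the parameters (the paper instead argues that $h$ preserves the highest-weight line directly). Part (iii) follows exactly as you say, from the Harish-Chandra values computed in Lemma~\ref{lem:phi-of-C1C2Q2}.

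There is, however, a genuine gap in your argument for odd dimension. Knowing that $Q^{(2)}$ acts as $q\cdot(-1)^{|\cdot|}$ and that consecutive weight spaces alternate parity does \emph{not} by itself force $\dim V$ to be odd: if $q=0$ (equivalently $\lambda^2=\mu^2$) the anti-central action is trivial, and even when $q\neq 0$ there is no obvious obstruction to an even number of one-dimensional weight spaces. The paper does not argue via $Q^{(2)}$ at all. Instead, the oddness is extracted from the explicit Shapovalov factors in Lemma~\ref{lem:tech}: one computes
\[
x_\alpha x_{-\alpha}^n\equiv F_n(H,\hat h)\,x_{-\alpha}^{n-1}\pmod{A x_\alpha+A x_{2\alpha}},
\]
and finds that for \emph{even} $n$ the factor $F_n$ is a nonzero rational multiple of $\hat h^2$, hence never vanishes once $\lambda\neq 0$. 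Thus a singular vector can only appear at an odd level, which is precisely the structural fact you asserted heuristically but did not prove.

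This also exposes a case you omitted: you must rule out $\lambda=0$ separately. When $\lambda=0$ the vectors $x_{-\alpha}^k v_0$ no longer span the Verma module (relation~\eqref{eq:-alphaD-alpha} degenerates), and the paper uses part~(i) of Lemma~\ref{lem:tech}, namely $\hat x_{2\alpha}\hat x_{-2\alpha}^n\equiv -nH^2(H-n+1)\hat x_{-2\alpha}^{n-1}$ modulo $A\hat x_\alpha+A\hat x_{2\alpha}+A\hat h$, to show that $\hat x_{-2\alpha}^n\bar v_\lambda\neq 0$ for all $n$ when $\mu\notin\Z$, so $L(0,\mu)$ is infinite-dimensional. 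Your Shapovalov factorization $\prod_{k\ \mathrm{odd}}(\lambda^2-(\mu+k)^2)$ is therefore only valid for $\lambda\neq 0$, and establishing it (together with the $\lambda=0$ analysis) is exactly the content of Lemma~\ref{lem:tech}, which is the main technical work behind the theorem.
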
 

In Section \ref{sec:TensorProducts}, we appeal to Mickelsson's application of reduction algebras to representation theory though in the super setting. Namely, we demonstrate that the representation theory of the reduction superalgebra $A$ (Goal \ref{Goal2}) yields the following theorem on tensor product representations of $\Fosp(1|2)$.

\begin{Theorem}[Polynomial tensor product decomposition]\label{thm:MainTheorem3} Fix the root vector basis $\{x_{-2\al}, x_{-\al}, h, x_{\al}, x_{2\al}\}$ of $\Fosp(1|2)$ with positive root $\alpha \equiv -1 \in (\C h)^{\ast}$; consequently, $\Fosp(1|2) = \Fn_{-} \oplus \C h \oplus \Fn_{+}$, for $\Fn_{\pm} = \C x_{\pm2\al} \oplus \C_{\pm\al}$. 
Let $\ell$ be a non-negative integer. Let $V(-\ell)$ be the unique irreducible representation of $\Fosp(1|2)$ of dimension $2\ell+1$ with highest weight vector $v_\ell$. Let $\C[x]=V(\tfrac{1}{2})$ be the polynomial representation of $\Fosp(1|2)$ with highest weight vector $1$.
Then we have the following explicit decomposition into irreducible $\Fosp(1|2)$-representations:
\begin{equation}\label{eq:polytensordecomposition}
 \C[x]\otimes V(-\ell) = \bigoplus_{j=0}^{2\ell} U(\Fn_-) \big( \tilde x_{-\al} - \frac{1}{H-1}X_{-\al}\tilde h-\frac{1}{H-1}X_{-\al}^2\tilde x_\al-\frac{2}{(H-2)(H-1)}X_{-\al}^3\tilde x_{2\al}\big)^j (1\otimes v_\ell),
\end{equation}
where $X_{-\al}=x_{-\al}\otimes 1 + 1\otimes x_{\al}$, and $\tilde x_{\beta}=x_{\beta}\otimes 1 - 1\otimes x_{\beta}$, for $\be\in\{\pm\al,\pm 2\al\}$.
\end{Theorem}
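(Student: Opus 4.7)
The strategy is to leverage the reduction algebra machinery developed throughout the paper. For any two $\Fosp(1|2)$-modules $V_1,V_2$, the space $(V_1\otimes V_2)^{\Fn_+^{\text{diag}}}$ of singular vectors carries a natural action of the diagonal reduction algebra $A$, and each generator of $A$ lifts to an explicit operator on $V_1\otimes V_2$ of the form ``PBW monomial in $\Fn_-^{\text{diag}}$ with coefficients in the dynamical scalar ring $R=\C[H][(H-n)^{-1}\mid n\in\Z]$.'' The expression inside the parentheses in \eqref{eq:polytensordecomposition} is precisely such a lift on $\C[x]\otimes V(-\ell)$ of (a scalar multiple of) a lowering generator of $A$ with leading term $\tilde x_{-\al}$.

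First, I would set
\[E:=\tilde x_{-\al}-\tfrac{1}{H-1}X_{-\al}\tilde h-\tfrac{1}{H-1}X_{-\al}^2\tilde x_\al-\tfrac{2}{(H-2)(H-1)}X_{-\al}^3\tilde x_{2\al}\]
and verify directly that $yEw$ lies in $\Fn_+^{\text{diag}}\cdot\big(\C[x]\otimes V(-\ell)\big)$ for every $y\in\Fn_+^{\text{diag}}$ and every singular $w$. This reduces to a finite computation using the coproduct action of $x_\al$ and $x_{2\al}$: the rational coefficients appearing are precisely those needed to cancel the obstruction terms, mirroring the extremal projector construction. Since $1\otimes v_\ell$ is singular (both $1\in V(\tfrac{1}{2})$ and $v_\ell\in V(-\ell)$ are highest-weight), every $w_j:=E^j(1\otimes v_\ell)$ is again singular.

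Next I would analyze the $A$-submodule $W:=\Span_\C\{w_j\mid j\ge 0\}$ via Theorem \ref{thm:MainTheorem2}. The ghost center acts on $1\otimes v_\ell$ by scalars $(\la,\mu)$ computable from the Casimir and Scasimir eigenvalues on $V(\tfrac{1}{2})$ and $V(-\ell)$, and these scalars will satisfy \eqref{eq:lambda-mu-n} with $n=2\ell+1$. Hence $W\cong L(\la,\mu)$ has dimension exactly $2\ell+1$, forcing $w_{2\ell+1}=0$ and showing that $\{w_0,\ldots,w_{2\ell}\}$ spans the full singular vector space. Each $U(\Fn_-)w_j$ is then a Verma-type irreducible $\Fosp(1|2)$-submodule (irreducible because its highest weight is generic), and comparing formal characters of both sides of \eqref{eq:polytensordecomposition} forces the sum to be direct.

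The principal obstacle is the first step: establishing that $E$ indeed represents the image of a generator of $A$, so that it preserves singular vectors. This combines a formal identity at the level of $A$, which leverages the presentation from \cite{hartwigDiagonalReductionAlgebra2022a}, with the verification that $H-1$ and $H-2$ remain invertible on every weight space traversed by $w_0,\ldots,w_{2\ell}$; the latter follows from $1\otimes v_\ell$ having non-integral $H$-eigenvalue, which is a consequence of the highest weight $\tfrac{1}{2}$ of $V(\tfrac{1}{2})=\C[x]$. A secondary subtlety is confirming the irreducibility of each $U(\Fn_-)w_j$, which hinges on the genericity of its highest weight and ensures no further Shapovalov-type singular vectors arise inside $U(\Fn_-)w_j$.
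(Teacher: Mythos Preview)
Your approach is essentially the paper's: identify the operator $E$ as the action of the reduction-algebra generator $\bar x_{-\al}$ on $V^+$, then invoke the classification Theorem~\ref{thm:MainTheorem2} to pin down $\dim V^+=2\ell+1$ and hence the basis $\{E^j(1\otimes v_\ell)\}_{j=0}^{2\ell}$. The paper streamlines your first step by quoting Proposition~\ref{prp:tildestuff} (the extremal-projector formula $P\tilde x_{-\al}+I$) rather than verifying the normalizer property of $E$ by hand, but that is only a difference of bookkeeping.

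There is, however, one genuine gap in your argument. From the fact that the ghost center acts on $1\otimes v_\ell$ by the scalars $(\la,\mu)$ you cannot conclude $W\cong L(\la,\mu)$: Theorem~\ref{thm:MainTheorem2} classifies \emph{irreducible} finite-dimensional $A$-modules, and you have not shown that the cyclic module $W=A\cdot(1\otimes v_\ell)$ is irreducible, nor even that it is finite-dimensional. A priori $W$ is only a highest-weight quotient of a Verma module and could sit strictly above $L(\la,\mu)$. The paper closes this gap by citing (just before the proof) that $V^+$ is an irreducible $A$-module whenever the tensor factors are irreducible highest-weight $\Fosp(1|2)$-modules---this is the van den Hombergh/Mickelsson theorem referenced in the introduction, recorded as Proposition~2.2 of \cite{hartwigDiagonalReductionAlgebra2022a}. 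Once $V^+$ is known to be irreducible, $W=V^+$ automatically, and Theorem~\ref{thm:MainTheorem2}(iii) applies. You should insert this citation (or supply an independent argument for irreducibility) at the point where you pass from ghost-center eigenvalues to $W\cong L(\la,\mu)$.

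A minor phrasing issue: ``$yEw\in\Fn_+^{\text{diag}}\cdot V$'' is not what you want; you want $\FN_+E\subset U\FN_+$ as operators (i.e.\ $E$ normalizes the left ideal), which then gives $yEw\in U\FN_+\cdot w=0$ for singular $w$.
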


We begin with background and a technical lemma in Section \ref{sec:Preliminaries} and conclude with its proof in Appendix \ref{appendix:A}. 

\section*{Acknowledgments}

The first author was supported by Simons Foundation Collaboration Grant \#637600. The second author was supported by his wife, Diamond Emelda Williams.
 
\section{Preliminaries}\label{sec:Preliminaries}
\subsection{The diagonal reduction algebra of \texorpdfstring{$\Fosp(1|2)$}{osp(1|2)}}\label{sec:DRAosp}

In \cite{hartwigDiagonalReductionAlgebra2022a}, the diagonal reduction algebra $Z(\FG, \Fg; D)$ of the Lie superalgebra $\Fosp(1|2)$ is initially given as a quotient algebra isomoprhic to the superalgebra $A$, whose definitions we state below. 
Let $\FG$ be the Lie superalgebra $\Fosp(1|2)\times\Fosp(1|2)$ in which $\Fg$ is the reductive image of the Lie superalgebra $\Fosp(1|2)$ under the diagonal embedding $\delta$. We let $\Fp$ be the reductive complement of $\Fg$. The multiplicative set $D$ is the monoid generated by integer shifts of our choice Cartan element $H \in \Fg$; throughout, the Cartan subalgebra of $\Fg$ is $\FH = \C H$. 
There is a basis $\{x_{-2\al}, x_{-\al}, h, x_{\al}, x_{2\al}\}$ of $\Fosp(1|2)$ with
the given relations under the supercommutator $[\cdot,\cdot]$ (and with the usage of $\pm = - \mp$ as a dependent parallel within any single equation):
\begin{gather*}
    [h,x_{\pm k\alpha}] = \mp k x_{k\alpha}, \qquad [x_{-k\alpha}, x_{k\alpha}] = h, \enskip k\in\{1,2\},\\  
    [x_{\pm\alpha}, x_{\pm\alpha}] =  \mp2x_{\pm2\alpha}, \qquad
    [x_{\pm\alpha}, x_{\mp2\alpha}] = x_{\mp\alpha}, \qquad
    [x_{\pm2\alpha}, x_{\pm\alpha}] = 0.
\end{gather*}
The Lie superalgebra $\Fosp(1|2)$ also has a triangular decomposition
\[\Fosp(1|2) = \Fn_{-} \oplus \Fh \oplus \Fn_{+}, \quad \Fh = \C h, \quad \Fn_{\pm} = \C x_{\pm 2\al} \oplus \C x_{\pm \al}.\]

Continuing, set $R=D^{-1}U(\FH)=\C[H][(H-n)^{-1}\mid n\in\Z]$ to be the ring of dynamical scalars.
We let $X_{\be} = (x_{\be},x_{\be}) \in \Fg$ and identify it with $x_{\be} \otimes 1 + 1 \otimes x_{\be} \in U(\FG)$ as an element of  $R \otimes_{U(\FH)} U(\FG)$; likewise, $\tilde x_{\be} = (x_{\be},- x_{\be}) \in \Fp$ is identified with $x_{\be} \otimes 1 - 1 \otimes x_{\be}$, for all root vectors $x_{\be}$, roots $\be \in \{\pm 2\alpha,\pm\alpha\}$. We also put $H$ for $(h,h)$ and $\tilde h$ for $(h,-h)$ and for their natural identifications, $h \otimes 1 + 1 \otimes h$ and $h \otimes 1 - 1 \otimes h$, respectively. Names for the spaces 
\begin{align*}\FN_{\pm} &= \C X_{\pm 2 \alpha} \oplus \C X_{\pm \alpha}\\ 
\tilde\Fh &= \C \tilde{h}\\ 
\tilde\Fn_{\pm} &= \C \tilde x_{\pm 2 \alpha} \oplus \C \tilde x_{\pm \alpha}
\end{align*} will be useful below. 
Underlying the theory is the $\Fg$-module decomposition
\[\FG = \Fg \oplus \Fp = (\FN_{-} \oplus \FH \oplus \FN_{+}) \oplus  (\tilde\Fn_{-} \oplus \tilde\Fh \oplus \tilde\Fn_{+})\] with $\Fg \cong \Fp$, $X \leftrightarrow \tilde{x}$, as $\Fg$-modules.

The associative algebra $U \supset U(\FG)$ along with the left ideal $I$ generated by $\FN_{+}$ and its normalizer $N_{U}(I)$ are used to define $Z$:

{\centering
$\displaystyle
\begin{aligned}
    U &= R \otimes_{U(\FH)} U(\FG),\\ 
    I &= U \FN_{+},\\ 
    Z &= Z(\FG, \Fg; D) = N_{U}(I)/I
\end{aligned}$
\par}
\vspace{1em}
\vspace{1em}
The algebra $Z$ is called the \emph{diagonal reduction algebra of $\Fosp(1|2)$}; furthermore, the canonical projection of super vector spaces $U \to U/\textup{\Romanbar{II}}$, where ``double I'' is the subspace $\textup{\Romanbar{II}} =   U \FN_{+} + \FN_{-} U$,  induces an isomorphism of $Z$ with the algebra \[A=(U/\textup{\Romanbar{II}},\bDiamond),\] where $\bDiamond$ is an associative product on the double coset space $U/\textup{\Romanbar{II}}$ defined through the extremal projector \cite{tolstoyExtremalProjectionsReductive1985, tolstoyExtremalProjectorsContragredient2011,hartwigDiagonalReductionAlgebra2022a}.
Generators of the reduction algebra $A$ (as an $R$-ring) are decorated like so: $\bar{x}_\beta=\tilde x_\beta+\textup{\Romanbar{II}}$, $\bar h=\tilde h+\textup{\Romanbar{II}}$, to distinguish them from elements of $\Fosp(1|2)$.
Lastly, we will need a non-localized version of $I$ which we denote\\

{\centering
$\displaystyle
\begin{aligned}
    \check{I} = U(\FG)\FN_{+}.
\end{aligned}$
\par}
\vspace{1em}
For a more thorough account of superified spaces, their maps, and other non-classical notions, readers are pointed to the texts \cite{chengDualitiesRepresentationsLie2012, mussonLieSuperalgebrasEnveloping2012} or Section 2-b of \cite{brundanHeckeCliffordSuperalgebrasCrystals2002}.

\subsection{Presentation and PBW basis for \texorpdfstring{$Z$}{Z}}
In \cite{hartwigDiagonalReductionAlgebra2022a} the following two theorems were proved and are key to proving the main results of the subsequent sections. 

\begin{Theorem}\label{thm:presentation}
	$A$ is generated as an $R$-ring by $\{ \bar x_{-2\al}, \bar x_{-\al}, \bar h, \bar x_\al, \bar x_{2\al}\}$ subject to the following relations:
	\begin{subequations}\label{eq:thm-rels}
		\begin{align}
            \bar x_{k\al} \bDiamond f(H) &= f(H+k)\bDiamond \bar x_{k\al}, \qquad \forall k\in\{\pm 1, \pm 2\},\;\forall f(H)\in R,\label{eq:xf}\\
			  \bar h \bDiamond f(H) &= f(H) \bDiamond  \bar h,\qquad\forall f(H)\in R,\\
			  \bar x_{2\al}  \bDiamond  \bar x_{\al}
			&=\big(1-\frac{2}{H+1}\big) \bar x_\al \bDiamond   \bar x_{2\al}
			\label{eq:2alphaDalpha}\\
        \bar x_\al  \bDiamond  \bar x_\al &= \frac{2}{H}   \bar h  \bDiamond  \bar x_{2\al} \label{eq:alphaDalpha}\\
        \bar x_{-\al} \bDiamond   \bar x_{-\al} &= -\frac{2}{H-2}   \bar x_{-2\al} \bDiamond   \bar h \label{eq:-alphaD-alpha}\\
			  \bar x_{2\al}  \bDiamond  \bar h &=
			\Big(1-\frac{2}{H+1}\Big)  \bar h  \bDiamond  \bar x_{2\al}
			\label{eq:2alphaDh} \\
			  \bar x_{2\al} \bDiamond   \bar x_{-\al} &=
			\Big(1-\frac{2}{H(H-1)}\Big) 
			  \bar x_{-\al} \bDiamond   \bar x_{2\al} 
			+\frac{2}{H+1}   \bar h  \bDiamond  \bar x_\al
			\label{eq:2alphaD-alpha}\\
			  \bar x_{2\al} \bDiamond   \bar x_{-2\al}&=
			\Big(1+2\frac{H^3 + H^2 - 6H + 4}{(H-2)(H-1)H(H+1)(H+2)}\Big) 
			  \bar x_{-2\al}  \bDiamond  \bar x_{2\al}
			\nonumber\\
			&\quad-\frac{H^2-H-1}{(H-1)H(H+1)}
  \bar x_{-\al}  \bDiamond  \bar x_\al 
			+\frac{1}{H+1}  \bar h  \bDiamond  \bar h 
			+\frac{-H^2}{H+1} 
			\label{eq:2alphaD-2alpha}\\
			  \bar x_\al \bDiamond    \bar h &=
			\big(1-\frac{1}{H}\big)  \bar h  \bDiamond  \bar x_\al  
			\label{eq:alphaDh} \\
			  \bar x_\al  \bDiamond   \bar x_{-\al} &=
            \Big(-1+\frac{-1}{H-1}\Big) 
			  \bar x_{-\al} \bDiamond   \bar x_\al \nonumber \\
			&\quad + \frac{4H}{(H-1)(H-2)}  \bar x_{-2\al} \bDiamond \bar x_{2\al} -\frac{1}{H}   \bar h \bDiamond \bar h 
			+H
			\label{eq:alphaD-alpha} \\
			  \bar x_\al  \bDiamond   \bar x_{-2\al} & = 
      \Big(1-\frac{2}{(H-1)(H-2)}\Big)  \bar x_{-2\al} \bDiamond  \bar x_\al 
			-\frac{2}{H} 
			  \bar x_{-\al} \bDiamond   \bar h
			\label{eq:alphaD-2alpha}\\
			  \bar h  \bDiamond   \bar x_{-\al} &=
			\big(1-\frac{1}{H-1}\big) \bar x_{-\al}  \bDiamond  \bar h 
			\label{eq:hD-alpha}\\
			  \bar h  \bDiamond   \bar x_{-2\al} &=
			\Big(1-\frac{2}{H-1}\Big) \bar x_{-2\al} \bDiamond   \bar h
			\label{eq:hD-2alpha}\\
			  \bar x_{-\al}  \bDiamond   \bar x_{-2\al} &=\big(1-\frac{2}{H-2}\big) \bar x_{-2\al} \bDiamond    \bar x_{-\al}
			\label{eq:-alphaD-2alpha}
		\end{align}
	\end{subequations}
\end{Theorem}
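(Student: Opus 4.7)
My plan is to prove the theorem in two stages: first, verify that each listed relation holds in $A$; second, show the listed relations form a complete presentation of $A$ as an $R$-ring.

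For stage one, I would use the description of the diamond product via the extremal projector: under the isomorphism $Z\cong A$, one has $\bar u\bDiamond \bar v=\overline{P(uv)}$, where $P$ is Tolstoy's extremal projector for the diagonal copy $\Fg\cong\Fosp(1|2)$ acting on $U$ and the overline denotes the class in $U/\textup{\Romanbar{II}}$. Relation \eqref{eq:xf} and its companion for $\bar h$ are immediate from the weight decomposition of $\FG$, since an element of weight $k\al$ conjugates any $f(H)\in R$ to $f(H+k)$. For the remaining relations, I would fix a pair of generators $\tilde x_\be,\tilde x_\ga$, compute $\tilde x_\be\tilde x_\ga$ inside $U(\FG)\cong U(\Fosp(1|2))^{\otimes 2}$ by sorting tensor factors via the $\FG$-commutation rules, apply sufficiently many terms of the series $P=\sum_n P_n$ to land the result in $N_U(I)$, and then reduce modulo $\textup{\Romanbar{II}}=U\FN_+ + \FN_- U$ by pushing raising operators to the right and lowering operators to the left. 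Only finitely many terms of $P$ contribute because the weight of $\tilde x_\be\tilde x_\ga$ is bounded, so $\FN_+^{k}\tilde x_\be\tilde x_\ga\subset I$ for large $k$.

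For stage two, let $A'$ denote the abstract $R$-ring defined by the presentation in the theorem. Stage one produces a well-defined $R$-ring homomorphism $A'\to A$ sending each abstract generator to its hatted counterpart. To prove bijectivity, I would use Bergman's diamond lemma on the rewriting system suggested by the relations, with ordering that orients each relation from the left-hand side (products with a ``positive'' generator on the outside) to the right-hand side. The normal forms then become ordered PBW-type monomials $\bar x_{-2\al}^{a_1}\bar x_{-\al}^{\ep_1}\bar h^{a_2}\bar x_{\al}^{\ep_2}\bar x_{2\al}^{a_3}$ with $a_i\in\Z_{\ge 0}$ and $\ep_i\in\{0,1\}$ (the restriction on $\ep_i$ coming from \eqref{eq:alphaDalpha} and \eqref{eq:-alphaD-alpha}). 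Since $A$ is already known to be free as a left $R$-module on precisely this set of monomials (inherited from the $\Fg$-module isomorphism $U(\FG)/\check I\cong S(\Fp)$ localized at $D$), any surjection of $R$-modules sending our normal-form spanning set of $A'$ onto this basis of $A$ must be an isomorphism.

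The main obstacle will be relation \eqref{eq:2alphaD-2alpha}. Its denominator $(H-2)(H-1)H(H+1)(H+2)$ reveals that contributions from $P$ up to fourth order in the root vectors enter, and the parity signs generated by the odd generators $\tilde x_{\pm\al}$ in the superbracket must be tracked carefully to avoid a sign error in either the coefficient of $\bar x_{-\al}\bDiamond \bar x_{\al}$ or the constant term $-H^2/(H+1)$. A closely related technical point is the diamond-lemma verification: all critical overlaps among the quadratic relations \eqref{eq:2alphaDalpha}--\eqref{eq:-alphaD-2alpha}, obtained by adjoining a third generator on either end, must reduce to a common normal form. This confluence check is in principle a finite computation but subsumes the most intricate bookkeeping of the proof.
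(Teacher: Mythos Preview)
The paper does not prove Theorem~\ref{thm:presentation}. It is quoted verbatim from the authors' earlier paper \cite{hartwigDiagonalReductionAlgebra2022a}; the surrounding text in Section~\ref{sec:Preliminaries} says explicitly ``In \cite{hartwigDiagonalReductionAlgebra2022a} the following two theorems were proved and are key to proving the main results of the subsequent sections.'' So there is no proof in this paper to compare your proposal against.

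That said, your outline is a sensible reconstruction of how such a presentation theorem is typically established, and it matches the flavor of the machinery the paper invokes (extremal projector for computing $\bDiamond$-products, PBW basis from Theorem~\ref{thm:PBW}). Two remarks on your sketch as written. First, in stage two you appeal to a freeness statement for $A$ (``$A$ is already known to be free as a left $R$-module on precisely this set of monomials''), but that is exactly Theorem~\ref{thm:PBW}, which in this paper is also imported from \cite{hartwigDiagonalReductionAlgebra2022a}; if you intend a self-contained argument you would need to supply that independently, or else your diamond-lemma confluence check must actually be carried out rather than bypassed. Second, your description of the diamond product as $\bar u\bDiamond\bar v=\overline{P(uv)}$ is slightly off: the projector acts between the factors, not on the ordinary product in $U$; one has $\bar u\bDiamond\bar v=\overline{uPv}$ modulo $\textup{\Romanbar{II}}$ for suitable representatives, which is what makes the computation of each quadratic relation terminate after finitely many terms of $P$.
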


\begin{Theorem} \label{thm:PBW}
	$A$ is a free left (and right) $R$-module on the following set of monomials:
	\begin{equation}\label{PBWforAsetofmonomials}
		\{\bar x_{-2\al}^{\bDiamond p}\bDiamond\bar x_{-\al}^{\bDiamond q}\bDiamond\bar h^{\bDiamond r}\bDiamond\bar x_\al^{\bDiamond s}\bDiamond \bar x_{2\al}^{\bDiamond t}
		\mid p,q,r,s,t\in\Z_{\ge 0},\, q,s\le 1\}.
	\end{equation}
\end{Theorem}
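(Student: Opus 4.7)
Proof proposal. The plan is to deduce the $R$-PBW decomposition of $A$ from classical PBW for the enveloping superalgebra $U(\FG)$, via the explicit realization $A=U/\textup{\Romanbar{II}}$. I would fix a homogeneous ordered basis of $\FG=\Fosp(1|2)\times\Fosp(1|2)$ in which the four $\FN_-$-generators come first, followed by the five generators of $\Fp=\tilde\Fn_-\oplus\tilde\Fh\oplus\tilde\Fn_+$, and finally the four $\FN_+$-generators (with $H$ absorbed into $R$). By PBW for Lie superalgebras, together with base change over $U(\FH)$, $U$ is free as an $R$-module on the corresponding PBW monomials, with the odd generators $X_{\pm\al}$, $\tilde x_{\pm\al}$ each appearing in exponent $0$ or $1$.

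Next, I would show that $\textup{\Romanbar{II}}$ has an $R$-basis consisting of exactly those PBW monomials touched by at least one $\FN_\pm$-generator. The left ideal $U\FN_+$ is $R$-spanned by PBW monomials of strictly positive exponent in $X_\al$ or $X_{2\al}$: expanding $u\cdot n_+$ in PBW form leaves a nonzero $\FN_+$-factor on the far right, since straightening an $\FN_+$-generator past any other PBW factor produces only PBW monomials that again terminate in an $\FN_+$-generator. Dually, $\FN_-U$ is $R$-spanned by PBW monomials with strictly positive exponent in $X_{-\al}$ or $X_{-2\al}$. Summing, $\textup{\Romanbar{II}}=U\FN_++\FN_-U$ has the asserted basis, so $A\cong U/\textup{\Romanbar{II}}$ is free over $R$ on the projections
\[\pi\big(\tilde x_{-2\al}^{\,p}\,\tilde x_{-\al}^{\,q}\,\tilde h^{\,r}\,\tilde x_\al^{\,s}\,\tilde x_{2\al}^{\,t}\big),\qquad p,r,t\in\Z_{\ge 0},\ q,s\in\{0,1\}.\]

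Finally, I would match this pure basis against the $\bDiamond$-monomials in \eqref{PBWforAsetofmonomials}. Because $\bDiamond$ is induced from the ordinary product of $U$ through the extremal projector for $\Fg$, which has the form $P=1+\sum(\text{negative-root products on the left})\cdot(\text{positive-root products on the right})$, one obtains a triangularity identity
\[\bar x_{-2\al}^{\bDiamond p}\bDiamond\bar x_{-\al}^{\bDiamond q}\bDiamond\bar h^{\bDiamond r}\bDiamond\bar x_\al^{\bDiamond s}\bDiamond\bar x_{2\al}^{\bDiamond t}=\pi\big(\tilde x_{-2\al}^{\,p}\tilde x_{-\al}^{\,q}\tilde h^{\,r}\tilde x_\al^{\,s}\tilde x_{2\al}^{\,t}\big)+(\text{strictly lower PBW degree}).\]
The change-of-basis matrix over $R$ is therefore upper-unitriangular, hence invertible, so the $\bDiamond$-monomials also form a free $R$-basis of $A$. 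Right $R$-freeness is then a formal consequence of \eqref{eq:xf}, which exchanges left and right multiplication by any $f(H)$ at the cost of an integer shift in $H$.

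The main obstacle is the triangularity claim itself: one must carefully track how the extremal-projector corrections interact with the PBW filtration under projection onto $A$. A cleaner alternative is to apply Bergman's Diamond Lemma directly to the presentation of Theorem \ref{thm:presentation}, equipped with the degree-lexicographic order $\bar x_{-2\al}<\bar x_{-\al}<\bar h<\bar x_\al<\bar x_{2\al}$ and the two odd-square reductions \eqref{eq:alphaDalpha}, \eqref{eq:-alphaD-alpha}; checking that the finitely many overlap ambiguities—most delicately, the triples mixing opposite roots such as $\bar x_\al\bDiamond\bar x_\al\bDiamond\bar x_{-\al}$ and $\bar x_{2\al}\bDiamond\bar x_\al\bDiamond\bar x_{-\al}$—all resolve yields the PBW basis for the abstractly presented algebra, whereupon the canonical surjection onto $A$ is forced by the rank count against the pure basis above to be an isomorphism.
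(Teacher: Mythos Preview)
The paper does not contain its own proof of Theorem~\ref{thm:PBW}; the result is quoted from the authors' earlier paper \cite{hartwigDiagonalReductionAlgebra2022a}, so there is no in-paper argument to compare against. Your outline is essentially the standard route (PBW for $U(\FG)$ over $R$, identify the $R$-basis of $U/\textup{\Romanbar{II}}$ with the ``pure'' monomials in the $\tilde x_\beta$'s, then transfer to the $\bDiamond$-monomials), and the first two steps are fine as written.

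The gap is in the triangularity step. Your displayed identity
\[
\bar x_{-2\al}^{\bDiamond p}\bDiamond\cdots\bDiamond\bar x_{2\al}^{\bDiamond t}
=\pi\big(\tilde x_{-2\al}^{\,p}\cdots\tilde x_{2\al}^{\,t}\big)+(\text{strictly lower PBW degree})
\]
is false if ``PBW degree'' means total degree $p+q+r+s+t$. Already for a single $\bDiamond$ one computes, using $P=1+\varphi_1(H)X_{-\al}X_\al+\cdots$ and $[X_\al,\tilde x_\al]=-2\tilde x_{2\al}$, $[X_{-\al},\tilde x_{-\al}]=2\tilde x_{-2\al}$,
\[
\bar x_{-\al}\bDiamond\bar x_\al=\pi(\tilde x_{-\al}\tilde x_\al)-4\varphi_1(H)\,\pi(\tilde x_{-2\al}\tilde x_{2\al}),
\]
and the correction has the \emph{same} total degree~$2$. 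Similarly $\bar h\bDiamond\bar h=\pi(\tilde h^{2})+\varphi_1(H)\,\pi(\tilde x_{-\al}\tilde x_\al)+\cdots$. So the transition matrix is not unitriangular for the total-degree filtration, and your ``upper-unitriangular, hence invertible'' conclusion is unjustified as stated.

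The argument can be repaired by refining the order. In the examples above the correction always strictly increases the quantity $2p+q+s+2t$ (the ``total absolute root height''), and this is what actually happens in general for PBW-ordered $\bDiamond$-monomials: each surviving $P$-correction replaces some factor to the left of the insertion by its $\ad X_{-\al}$-image (root $\to$ root$-\al$, already nonpositive so $|$root$|$ increases) and some factor to the right by its $\ad X_\al$-image (root $\to$ root$+\al$, already nonnegative so $|$root$|$ increases). Ordering the pure monomials within each fixed weight and total degree by $2p+q+s+2t$ gives a genuinely unitriangular transition matrix over $R$, and then your conclusion goes through. You should state and verify this refined filtration rather than the total-degree one. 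Your Diamond-Lemma alternative is also a legitimate route, and in practice is how the presentation and PBW basis were established in \cite{hartwigDiagonalReductionAlgebra2022a}.
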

 The $\Z_{2}$-grading on $A$ is the induced grading from the construction beginning with $\Fosp(1|2)$. From Theorem \ref{thm:PBW}, specifically, $A = A_{\overline{0}} \oplus A_{\overline{1}}$ with even part comprising sums of monomials having  $q = s$ and odd part comprising sums of monomials having $q \neq s$.

\subsection{Normalized generators of \texorpdfstring{$A$}{A} in the Mickelsson algebra}

The Mickelsson algebra \cite{mickelssonStepAlgebrasSemisimple1973} is a non-localized version of the reduction algebra. For a reductive pair of Lie superalgebras $\Fl\subseteq \Fk$, fixing a triangular decomposition of $\Fl$ with $\Fl_{+}$ a summand, it is defined as $N_{U(\Fk)}(U(\Fk)\Fl_{+})/\U(\Fk)\Fl_{+}$.
The Mickelsson algebra may be regarded as a subalgebra of the corresponding reduction algebra. Generally, the Mickelsson algebra is not finitely generated as a $\C$-algebra \cite{herlemontDifferentialCalculusMathbfh2018}. One purpose of localization is to define the corresponding reduction algebra as a finitely-generated $R$-ring. 

In this subsection we provide elements of the Mickelsson algebra which correspond to the generators of $A$. This means we are in the setting of the Mickelsson algebra for the reductive pair $\Fg \subset \FG$ when $\FG=\Fosp(1|2)\times\Fosp(1|2)$ and $\Fg$ is the image of $\Fosp(1|2)$ under the diagonal embedding. The following argument \cite[see][Section 2.1]{khoroshkinContravariantFormReduction2018} shows that the Mickelsson algebra can be considered a subalgebra  of the reduction algebra. From the containment $\check{I} \subset I$ through the embedding $U(\FG) \subset U$ (see Section \ref{sec:DRAosp}):
Each element $u$ of $N_{U(\FG)}(\check{I})$ satisfies $\FN_{+} u\subset \check{I} \subset I$, yielding $I u = U\FN_{+} u \subset UI \subset I$.
Thus there is an inclusion $N_{U(\FG)}(\check{I})\to N_U(I)$.
Moreover, $\check{I} \subset I$ implies that $\check{I}$ is contained in the kernel of $N_{U(\FG)}(\check{I})\to N_U(I)/I$. In fact, by the PBW Theorem for $\FG$ with respect to an ordered basis of the form $(H,\ldots,X_{\al},X_{2\al})$, the ideal $\check{I}$ is equal to the kernel of $N_{U(\FG)}(\check{I})\to N_U(I)/I$.
Thus there is an injective homomorphism of associative 
superalgebras
\[
N_{U(\FG)}(\check{I})/\check{I} \to N_U(I)/I \cong A.
\]

Define the following elements of the reduction algebra $A$:
\begin{equation}\label{eq:hat-variables}
\left\{
\begin{aligned}
\hat x_{2\al} &= \bar x_{2\al}\\
\hat x_\al &= (H-1)\bar x_{\al}\\
\hat h &= (H-1)\bar h\\
\hat x_{-\al} &= (H-1)(H-2)\bar x_{-\al}\\
\hat x_{-2\al} &= (H-1)(H-2)\bar x_{-2\al}
\end{aligned}
\right.
\end{equation}
From the explicit formulas \cite[Section 3]{hartwigDiagonalReductionAlgebra2022a} for the elements $\bar x_\beta$, $\bar h$, multiplying by $(H-1)$ and $(H-1)(H-2)$ in this way has the effect of canceling denominators. Thus $\hat x_\beta$ and $\hat h$ all belong to the image of the Mickelsson algebra in the algebra $A$, in a reasonable sense. We cite Zhelobenko \cite[Eq. (4.16)]{zhelobenko1985gelfand} for this method  of producing elements of the Mickelsson algebra from the reduction algebra. Here we record the relations for these normalized generators (of $A$), which we use in the upcoming lemma and throughout the following sections.

\begin{Theorem}\label{thm:nonlocalgen}
The Mickelsson algebra elements described above obey the following relations: 
\begin{subequations} 
\begin{align}
\label{eq:relH}
\hat x_{k\al} \bDiamond H &= (H+k) \bDiamond  \hat x_{k\al},\quad k\in\{\pm 1,\pm 2\},\\
\label{eq:relh}
\hat h &\quad\text{ is central,}\\
\label{eq:rel11}
\hat x_\al \bDiamond  \hat x_\al &= 2\hat h \bDiamond \hat x_{2\al}\\
\label{eq:rel-1-1}
\hat x_{-\al} \bDiamond \hat x_{-\al} &= -2\hat h \bDiamond \hat x_{-2\al},\\
\label{eq:rel12}
\hat x_\be \bDiamond \hat x_{2\be} &= \hat x_{2\be} \bDiamond \hat x_\be, \quad \be\in\{\al,-\al\},\\
\label{eq:rel2-1}
(H-1)^2\hat x_{2\al} \bDiamond \hat x_{-\al} &= (H+1)^2\hat x_{-\al} \bDiamond \hat x_{2\al} + 2H\hat h \bDiamond \hat x_\al,\\
\label{eq:rel1-1}
(H-2)^2\hat x_{\al} \bDiamond \hat x_{-\al} &= -H^2 \hat x_{-\al} \bDiamond \hat x_{\al} +4H^2 \hat x_{-2\al} \bDiamond \hat x_{2\al} \nonumber \\&\quad -(H-2)^2\hat h\bDiamond\hat h + H^2(H-1)^2(H-2)^2,\\
\label{eq:rel1-2}
(H-2)^2\hat x_\al \bDiamond \hat x_{-2\al} &= H^2 \hat x_{-2\al} \bDiamond \hat x_\al - 2(H-1)\hat x_{-\al} \bDiamond \hat h,\\
\label{eq:rel2-2}
(H-1)^2(H-2)^2\hat x_{2\al} \bDiamond \hat x_{-2\al} &= H^2(H-1)^2\hat x_{-2\al} \bDiamond \hat x_{2\al}+(-H^2+H+1)\hat x_{-\al} \bDiamond \hat x_\al \nonumber \\&\quad +H(H-2)^2\hat h\bDiamond \hat h-H^3(H-1)^2(H-2)^2 
\end{align}
\end{subequations}
\end{Theorem}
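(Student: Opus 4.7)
The plan is to derive each identity in the theorem by substituting the definitions \eqref{eq:hat-variables} into the corresponding identity of Theorem \ref{thm:presentation}, using the commutation rule \eqref{eq:xf} to move rational functions of $H$ past the generators, and finally clearing denominators. Since each $\hat x_\beta$ differs from $\bar x_\beta$ by multiplication on the left by a polynomial $p_\beta(H)\in\{1,\,H-1,\,(H-1)(H-2)\}$, and $R$ is commutative, the shift rule \eqref{eq:relH} is inherited directly from \eqref{eq:xf}; and \eqref{eq:relh} follows from centrality of $\bar h$ together with $(H-1)\in R$.

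For each of the remaining relations I would apply a uniform four-step procedure: (i) rewrite each $\bar{}$ generator in terms of the corresponding $\hat{}$ generator via \eqref{eq:hat-variables}; (ii) substitute into the relevant relation of Theorem \ref{thm:presentation}; (iii) commute all scalar factors of $R$ to the leftmost position, incurring $f(H)\mapsto f(H+k)$ whenever the factor passes across a $\hat x_{k\al}$ (and no shift across $\hat h$, by \eqref{eq:relh}); (iv) multiply both sides by a least common denominator in $\C[H]$, a product of shifted powers of $(H-1)$ and $(H-2)$. For example, \eqref{eq:rel11} follows from \eqref{eq:alphaDalpha} because $\bar x_\al\bDiamond\bar x_\al = \frac{1}{(H-1)H}\hat x_\al\bDiamond\hat x_\al$ while $\frac{2}{H}\bar h\bDiamond\bar x_{2\al} = \frac{2}{H(H-1)}\hat h\bDiamond\hat x_{2\al}$, and multiplying through by $H(H-1)$ clears the denominators. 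The low-degree relations \eqref{eq:rel-1-1}, \eqref{eq:rel12}, \eqref{eq:rel2-1}, and \eqref{eq:rel1-2} reduce in precisely this manner, each after clearing at most three factors of $(H-1)$ or $(H-2)$.

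The main obstacle is expected to arise in the two heaviest relations, \eqref{eq:rel1-1} (from \eqref{eq:alphaD-alpha}) and \eqref{eq:rel2-2} (from \eqref{eq:2alphaD-2alpha}), each of which carries four PBW monomials on its right hand side with coefficients of rather high denominator degree. The delicate check is that, after the shifts in step (iii) and multiplication by $H(H-1)^2(H-2)^2$ for \eqref{eq:rel1-1} and by $H(H+1)(H-1)^2(H-2)^2$ for \eqref{eq:rel2-2}, every resulting rational coefficient collapses to the asserted polynomial in $H$. Because $A$ is a free $R$-module on the PBW basis of Theorem \ref{thm:PBW}, it suffices to verify these identities monomial by monomial, and the normalization \eqref{eq:hat-variables} is precisely tuned (in the style of \cite[Eq. (4.16)]{zhelobenko1985gelfand}) to guarantee polynomial, rather than merely rational, right hand sides at the Mickelsson level. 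Finally, since each $\hat x_\beta$ and $\hat h$ already lies in the image of the Mickelsson algebra $N_{U(\FG)}(\check{I})/\check{I}\to A$ (as discussed just before the theorem), the derived identities are automatically identities in the Mickelsson algebra itself.
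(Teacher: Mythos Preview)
Your approach is exactly the paper's own: its entire proof reads ``The calculations follow from substitution: Use \eqref{eq:hat-variables} and Theorem \ref{thm:presentation}.'' Your four-step outline (substitute, shift via \eqref{eq:xf}, clear denominators, compare PBW coefficients) is the right mechanical unpacking of that one line.

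One small slip to fix: you write that \eqref{eq:relh} ``follows from centrality of $\bar h$ together with $(H-1)\in R$,'' but $\bar h$ is \emph{not} central in $A$. Relations \eqref{eq:2alphaDh}, \eqref{eq:alphaDh}, \eqref{eq:hD-alpha}, \eqref{eq:hD-2alpha} show that $\bar h$ fails to commute with each $\bar x_{k\al}$ by a nontrivial factor in $R$. The point is precisely that the rescaling $\hat h=(H-1)\bar h$ absorbs those factors: for instance, from \eqref{eq:alphaDh} one has $\bar h\bDiamond\bar x_\al=\tfrac{H}{H-1}\bar x_\al\bDiamond\bar h$, hence $\hat h\bDiamond\bar x_\al=(H-1)\tfrac{H}{H-1}\bar x_\al\bDiamond\bar h=\bar x_\al\bDiamond(H-1)\bar h=\bar x_\al\bDiamond\hat h$, and similarly for the other $k$. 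So centrality of $\hat h$ is itself one of the substitution checks, not an input. (The paper notes this too: see the sentence after \eqref{eq:hath} and Lemma \ref{centralElementsA}.)
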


\begin{proof}
The calculations follow from substitution: Use \eqref{eq:hat-variables} and  Theorem \ref{thm:presentation}.
\end{proof}

\subsection{A technical lemma}
We will use $\hat h$ defined in \eqref{eq:hat-variables} by
\begin{equation}\label{eq:hath}
\hat h=(H-1)h.
\end{equation}
It was proved in \cite{hartwigDiagonalReductionAlgebra2022a} (and in more detail below; see Lemma \ref{centralElementsA}) that $\hat h$ is a central element of $A$. We will frequently use $\hat h$ as a generator in place of $h$. Note that $R[\hat h]=R[\bar h]$, although expressing a given element in either $R$-ring will result in different coefficients (for example, \eqref{eq:hath}).

\begin{Lemma}\label{lem:tech}
\text{}
\begin{enumerate}[{\rm (i)}]
\item \label{lem:techi} For any non-negative integer $n$,
\begin{equation}\label{eq:Fn-evenformula}\hat x_{2\al}\hat x_{-2\al}^n \equiv -nH^2(H-n+1) \hat x_{-2\al}^{n-1} \quad\mod{A\hat x_{\al}+A\hat x_{2\al}+A\hat h}.
\end{equation}
\item For any non-negative integer $n$,
\begin{equation}\label{eq:Fn-congruencebar}
\bar x_\al \bar x_{-\al}^n \equiv
 F_n(H, \hat h) \bar x_{-\al}^{n-1} \mod{A\bar x_\al+A\bar x_{2\al}},
\end{equation}
where
\begin{equation}
 H(H-1)^2 
 F_n(H,\hat h)=
\begin{cases}
\left(H^2(H-n)^{-2}-1\right) \hat h^2,& \text{$n$ even,}\\
 H^2(H-n)^2-\hat h^2,&\text{$n$ odd}.
\end{cases}
\end{equation}
\item For any non-negative integer $n$,
\begin{equation}\label{eq:Fn-congruencehat}
\hat x_\al \hat x_{-\al}^n \equiv
\hat F_n(H, \hat h) \hat x_{-\al}^{n-1} \mod{A\hat x_\al+A\hat x_{2\al}},
\end{equation}
where
\begin{equation} \label{eq:Fn-formula}
\hat F_n(H,\hat h)=
\begin{cases}
\left(H^2(H-n)^{-2}-1\right) \hat h^2,& \text{$n$ even,}\\
 H^2(H-n)^2-\hat h^2,&\text{$n$ odd}.
\end{cases}
\end{equation}
\end{enumerate}
\end{Lemma}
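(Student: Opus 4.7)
The plan is to prove all three parts by induction on $n$, using the commutation relations from Theorem \ref{thm:nonlocalgen}, the centrality of $\hat h$, and the fact that each modulus is a left ideal. The base cases $n=0,1$ fall out directly from the relations. Parts (ii) and (iii) are linked via the normalization $\hat x_\al=(H-1)\bar x_\al$, $\hat x_{-\al}=(H-1)(H-2)\bar x_{-\al}$ from \eqref{eq:hat-variables}: since $A\hat x_\al=A\bar x_\al$, $A\hat x_{2\al}=A\bar x_{2\al}$, and a short telescoping computation with $\hat x_\al f(H)=f(H+1)\hat x_\al$ gives $\hat F_n=H(H-1)^2F_n$, part (ii) follows from part (iii) by a direct change of variables. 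So I would focus on (i) and (iii).

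For part (i), put $I_1=A\hat x_\al+A\hat x_{2\al}+A\hat h$, and write $\hat x_{2\al}\hat x_{-2\al}^n=(\hat x_{2\al}\hat x_{-2\al})\hat x_{-2\al}^{n-1}$. Substitute the exact identity from \eqref{eq:rel2-2}: on the right-hand side the $\hat x_{-\al}\hat x_\al$ term lies in $A\hat x_\al$ and the $\hat h^2$ term lies in $A\hat h$, so after multiplying on the right by $\hat x_{-2\al}^{n-1}$ and reducing mod $I_1$ only the $\hat x_{-2\al}\hat x_{2\al}$ piece and the scalar $-H^3$ survive. Applying the inductive hypothesis to $\hat x_{2\al}\hat x_{-2\al}^{n-1}$, together with the left-ideal property $\hat x_{-2\al}\cdot I_1\subseteq I_1$ and the commutation rule $\hat x_{-2\al}\cdot f(H)=f(H-2)\hat x_{-2\al}$, produces a two-term recursion whose closed form is $-nH^2(H-n+1)$. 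A short preliminary induction via \eqref{eq:rel1-2} shows $\hat x_\al\hat x_{-2\al}^k\in I_1$ for every $k\ge 0$, which legitimizes discarding the $\hat x_{-\al}\hat x_\al\hat x_{-2\al}^{n-1}$ contribution in the first place.

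For part (iii), put $I_2=A\hat x_\al+A\hat x_{2\al}$. Unlike $I_1$ this does \emph{not} contain $A\hat h$, so $\hat h^2$ contributions persist and generate the $\hat h$-dependence in $\hat F_n$. The analogue of \eqref{eq:rel2-2} is now \eqref{eq:rel1-1}. Write $\hat x_\al\hat x_{-\al}^{n+1}=(\hat x_\al\hat x_{-\al})\hat x_{-\al}^n$, substitute \eqref{eq:rel1-1}, and reduce each resulting piece mod $I_2$: the $\hat x_{-\al}\hat x_\al\hat x_{-\al}^n$ piece via the inductive hypothesis on (iii) (combined with the left-ideal property); the $\hat x_{-2\al}\hat x_{2\al}\hat x_{-\al}^n$ piece via an auxiliary formula for $\hat x_{2\al}\hat x_{-\al}^n\mod{I_2}$, obtained by first reducing $\hat x_{-\al}^n$ through the exact relation $\hat x_{-\al}^2=-2\hat h\hat x_{-2\al}$ of \eqref{eq:rel-1-1} into a pure power of $\hat x_{-2\al}$ (possibly with a trailing $\hat x_{-\al}$) and then handling $\hat x_{2\al}\hat x_{-2\al}^m$ mod $I_2$ as in the proof of part (i) but retaining the $\hat h^2$ terms; and the $H^2(H-1)^2-\hat h^2$ piece directly. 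The parity dichotomy in \eqref{eq:Fn-formula} arises precisely because $\hat x_{-\al}^n$ reduces through \eqref{eq:rel-1-1} into qualitatively different monomial shapes depending on whether $n$ is even or odd.

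The main obstacle I anticipate is the bookkeeping: one has to carefully track the rational coefficients in $H$ as they commute past $\hat x_{-\al}$ and $\hat x_{-2\al}$ via $f(H)\hat x_{-k\al}=\hat x_{-k\al}f(H+k)$, and verify that the closed form in \eqref{eq:Fn-formula} satisfies a two-step recursion linking $n$ to $n+2$ (since the parity of $\hat x_{-\al}^n$ flips with $n$). This polynomial verification is exactly the sort of detail worth deferring to Appendix \ref{appendix:A}, as the paper does.
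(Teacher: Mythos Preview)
Your treatment of part (i) and of the link between (ii) and (iii) is correct and matches the paper (the paper goes (ii)$\Rightarrow$(iii) while you go (iii)$\Rightarrow$(ii); either direction works since the normalizing factors in \eqref{eq:hat-variables} are units, and your telescoping computation $\hat F_n=H(H-1)^2F_n$ is right).

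For the main induction, however, there is a genuine gap. You claim that computing $\hat x_{2\al}\hat x_{-2\al}^m$ modulo $I_2=A\hat x_\al+A\hat x_{2\al}$ proceeds ``as in the proof of part (i) but retaining the $\hat h^2$ terms.'' This is not true: expanding via \eqref{eq:rel2-2} and right-multiplying by $\hat x_{-2\al}^{m-1}$ produces the cross term $\hat x_{-\al}\hat x_\al\hat x_{-2\al}^{m-1}$, and your preliminary fact ``$\hat x_\al\hat x_{-2\al}^k\in I_1$'' fails with $I_1$ replaced by $I_2$. Indeed, by \eqref{eq:rel1-2} one has $\hat x_\al\hat x_{-2\al}\equiv -\tfrac{2(H-1)}{(H-2)^2}\hat h\,\hat x_{-\al}\pmod{I_2}$, which is \emph{not} in $I_2$. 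So modulo $I_2$ the $\hat x_{-\al}\hat x_\al$ term does not drop out; it contributes an additional $\hat h^2\hat x_{-2\al}^{m-1}$ piece (after one more application of $\hat x_{-\al}^2=-2\hat h\hat x_{-2\al}$). Your strategy is salvageable, but only after introducing and solving a second auxiliary recursion for $\hat x_\al\hat x_{-2\al}^k\pmod{I_2}$ and feeding its output into the $\hat x_{2\al}\hat x_{-2\al}^m$ recursion.

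The paper avoids this complication by working in the bar variables and running the conversion in the \emph{opposite} direction: rather than turning $\hat x_{-\al}^n$ into $\hat x_{-2\al}$-powers, it uses $x_\al^2=\tfrac{2\hat h}{H(H-1)}x_{2\al}$ to rewrite the troublesome $x_{2\al}x_{-\al}^n$ as $x_\al(x_\al x_{-\al}^n)$ and apply the inductive hypothesis twice, yielding a single nonlinear recursion \eqref{eq:66} of the form $F_{n+1}\sim aF_n+bF_nF_{n-1}/\hat h^2+c$. The paper then bounds $\deg_{\hat h}F_n\le 2$ and solves for the three coefficient functions $c_n^0,c_n^1,c_n^2$ separately (Lemmas~\ref{lem:c1}--\ref{lem:c0}). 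This coefficient-by-coefficient approach is what produces the closed forms cleanly; your route would eventually reach the same answer but through a more tangled pair of coupled recursions.
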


\begin{proof}
(i) Consider the relations given in Theorem \ref{thm:nonlocalgen}, in particular, \eqref{eq:rel2-2}. A proof by induction on $n$ gives the result.

(ii) The proof for this congruence is more involved; see Appendix \ref{appendix:A} for a complete proof.

(iii) Part (ii) and substitution for the normalized generators defined in \eqref{eq:hat-variables} imply the result.
\end{proof}
 
 With the tool of Lemma \ref{lem:tech} in play, we now proceed in attending to Goals \ref{Goal1} and \ref{Goal2} throughout Sections \ref{sec:GhostCenter} and \ref{sec:FDreps}.

\section{The ghost center of the diagonal reduction algebra of \texorpdfstring{$\Fosp(1|2)$}{osp(1|2)}}\label{sec:GhostCenter}

It is well-documented \cite[as a seminal example]{kacLieSuperalgebras1977} that Casimir operators play a key role in determining both central structure and representation theory of the universal enveloping algebra of a Lie superalgebra \cite{pinczonEnvelopingAlgebraLie1990}. Moreover, \cite{arnaudonCasimirGhost1997} explains the existence of the so-called Scasimir introduced in \cite{lesniewskiRemarkCasimirElements1995}. We show that the diagonal reduction algebra $A$ of $\Fosp(1|2)$ also has Casimir and Scasimir elements. 

\subsection{Casimir and Scasimir}
To describe the (S)Casimir elements of $A$, we consider the image in $A$ of three elements of the universal enveloping algebra $U(\FG)$ while making use of the natural isomorphism $U(\FG) \cong U(\Fosp(1|2)) \otimes U(\Fosp(1|2))$.

In \cite{lesniewskiRemarkCasimirElements1995}, a quadratic Casimir $C \in U(\Fosp(1|2))$ is written in $\Fosp(1|2)$ bosonic generators $L_\pm = -x_{\mp 2\al}, L_{3} = \frac{1}{2}h$ and fermionic generators $G_\pm = -\frac{\sqrt{-1}}{2} x_{\mp\al}$ such that (see \cite{hartwigDiagonalReductionAlgebra2022a} for conversion between sets of generators)
\begin{align}C &=   
 \frac{1}{2}(L_{+}L_{-} + L_{-}L_{+}) + L_{3}^{2} + G_{+}G_{-}-G_{-}G_{+} + \frac{1}{16}\notag\\ 
 &= \frac{1}{2}(x_{-2\alpha}x_{2\alpha} + x_{2\alpha}x_{-2\alpha}) + \frac{1}{4}h^{2} + \frac{-1}{4}(x_{-\alpha}x_{\alpha} - x_{\alpha}x_{-\alpha}) + \frac{1}{16}\notag\\
 &= x_{-2\alpha}x_{2\alpha} + \frac{-1}{2}x_{-\alpha}x_{\alpha} + \frac{1}{4}(h^{2}-h) + \frac{1}{16}.
\end{align}

Now consider two special elements \[\CC_{\pm}= C \otimes 1 \pm 1 \otimes C + I \in N_{U}(I)/I,\]
as in \cite[Section 6.5]{khoroshkinStructureConstantsDiagonal2011}. Note that membership holds
since $C \, \otimes \,1$ and $1 \, \otimes \,C$ are in $N_{U}(I)$ as elements of the center of $U(\Fosp(1|2)) \otimes U(\Fosp(1|2))$.

The following lemma expresses the projection of $\CC_{\pm}$ in $A$ in the PBW basis \eqref{PBWforAsetofmonomials}.
\begin{Lemma}\label{centralElementsA}
The elements $\bar\CC_{\pm} = C\otimes 1\pm 1\otimes C+ \emph{\Romanbar{II}}$ are central in $A$, with 
\begin{align}
    C^{(1)} = 8\bar{\CC}_{-} & = 2(H-1)\bDiamond\bar{h} \label{eq:CCnegdiam}\\
    C^{(2)} = 8\bar{\CC}_{+} & = \left(4 + \frac{4}{H-2} \right)\bar{x}_{-2\alpha} \bDiamond \bar{x}_{2\alpha} - \left(2 - \frac{1}{H-1}\right)\bar{x}_{-\alpha} \bDiamond \bar{x}_{\alpha} + \bar{h} \bDiamond \bar{h} + (H-1)^{2} \label{eq:CCposdiam}
\end{align}
\end{Lemma}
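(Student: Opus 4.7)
The plan has two stages: first I would establish centrality abstractly, then compute the PBW expansions by projecting $C\otimes 1\pm 1\otimes C$ to $U/\textrm{II}$. For centrality, since $C$ is central in $U(\Fosp(1|2))$, both $C\otimes 1$ and $1\otimes C$ are central in $U(\FG)$, hence $C\otimes 1\pm 1\otimes C\in Z(U)\subseteq N_U(I)$ projects to central elements of $Z=N_U(I)/I$. Transport along the algebra isomorphism $Z\cong A$ induced by the projection $U\to U/\textrm{II}$ yields centrality of $\bar\CC_\pm$ in $A$.

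Next, for $C^{(1)}$, I would expand $C = x_{-2\al}x_{2\al}-\tfrac{1}{2}x_{-\al}x_\al+\tfrac{1}{4}(h^2-h)+\tfrac{1}{16}$, substitute $x_\beta\otimes 1 = \tfrac{1}{2}(X_\beta+\tilde x_\beta)$, $1\otimes x_\beta = \tfrac{1}{2}(X_\beta-\tilde x_\beta)$, and analogously for $h$, and collect. The antisymmetric combination $C\otimes 1-1\otimes C$ becomes
\[
\tfrac{1}{2}(X_{-2\al}\tilde x_{2\al}+\tilde x_{-2\al}X_{2\al})-\tfrac{1}{4}(X_{-\al}\tilde x_\al+\tilde x_{-\al}X_\al)+\tfrac{1}{4}(H-1)\tilde h,
\]
using $(h^2-h)\otimes 1-1\otimes(h^2-h)=(H-1)\tilde h$ and noting that the $\tfrac{1}{16}$ constants cancel. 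All the cross terms lie in $\FN_-U+U\FN_+=\textrm{II}$ and so project to zero, leaving $\bar\CC_-=\tfrac{1}{4}(H-1)\bar h$, whence $C^{(1)}=8\bar\CC_-=2(H-1)\bDiamond\bar h$.

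For $C^{(2)}$, the parallel symmetric computation gives
\[
C\otimes 1+1\otimes C\equiv\tfrac{1}{2}\tilde x_{-2\al}\tilde x_{2\al}-\tfrac{1}{4}\tilde x_{-\al}\tilde x_\al+\tfrac{1}{8}(H^2+\tilde h^2)-\tfrac{1}{4}H+\tfrac{1}{8}\pmod{\textrm{II}},
\]
since $X_{-\beta}X_\beta\in\textrm{II}$ kills the double-$X$ terms. The Cartan part directly supplies $\bar h\bDiamond\bar h$ and the polynomial-in-$H$ pieces of the stated formula; after multiplication by $8$ the Cartan contribution is $\bar h\bDiamond\bar h + H^2 - 2H + 1 = \bar h\bDiamond\bar h+(H-1)^2$. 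The remaining root-quadratic pieces $\tilde x_{-k\al}\tilde x_{k\al}$ must be re-expressed in the diamond PBW basis (by PBW each is an $R$-linear combination of $\bar x_{-k\al}\bDiamond\bar x_{k\al}$ together with $\bar h\bDiamond\bar h$, powers of $H$, and constants), using the change of basis extracted from the explicit extremal-projector formulas for $\bar x_\beta$ in \cite{hartwigDiagonalReductionAlgebra2022a}; this produces the rational coefficients $\tfrac{4(H-1)}{H-2}$ and $-\tfrac{2H-3}{H-1}$ on $\bar x_{-2\al}\bDiamond\bar x_{2\al}$ and $\bar x_{-\al}\bDiamond\bar x_\al$, and the accompanying scalar corrections absorb into the Cartan part.

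The hardest part will be this final conversion from the naive ordinary product on $U$ modulo $\textrm{II}$ to the diamond product on $A$, since the projection $U\to U/\textrm{II}$ is merely one of super vector spaces and not of algebras. An alternative approach would be to take the claimed formula for $C^{(2)}$ as an ansatz, verify centrality directly by commuting it against the generators of $A$ using the relations in Theorem \ref{thm:presentation}, and then identify it with $8\bar\CC_+$ by matching the top-degree symbol in a suitable PBW filtration against the reduced expression displayed above.
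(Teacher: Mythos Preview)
Your proposal is correct and follows essentially the same approach as the paper: centrality by construction (images of central elements of $U(\FG)$ under $N_U(I)/I\cong A$), then reduction of $C\otimes 1\pm 1\otimes C$ modulo $\textup{\Romanbar{II}}$ via the substitutions $x_\beta\otimes 1=\tfrac12(X_\beta+\tilde x_\beta)$ etc., arriving at the same intermediate expressions \eqref{eq:CCnegtilde}--\eqref{eq:CCpostilde}. The paper compresses your final ``hardest part''---converting the $\tilde x_{-k\al}\tilde x_{k\al}$ terms to $\bDiamond$-PBW form---into a single citation of \cite[Lemma 3.2]{hartwigDiagonalReductionAlgebra2022a}, which is exactly the change-of-basis you describe; your alternative ansatz-verification route is not used.
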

\begin{proof}
Recalling $A \stackrel{\sim}{\leftarrow} N_{U}(I)/I$, explicitly, $n + \textup{\Romanbar{II}} \mapsfrom n + I$, for any normalizing element $n$, then the centrality of $\bar{\CC}_{\pm}$ follows from construction. 
Now taking computations modulo $\textup{\Romanbar{II}}$, we have
\begin{align*} 
C \otimes 1 \phantom{~} & \overset{\textup{\Romanbar{II}}}{\equiv} \phantom{~} \frac{1}{4}\left(X_{-2\alpha} + \tilde{x}_{-2\alpha} \right)\left(X_{2\alpha} + \tilde{x}_{2\alpha} \right)-\frac{1}{8}\left(X_{-\alpha} + \tilde{x}_{-\alpha}\right)\left(X_{\alpha} + \tilde{x}_{\alpha}\right) \notag\\
& \enskip + \frac{1}{16}\left(H + \tilde{h}\right)\left(H + \tilde{h}\right) - \frac{1}{8}\left(H + \tilde{h}\right) + \frac{1}{16} \notag\\
\phantom{~} &\overset{\textup{\Romanbar{II}}}{\equiv} \phantom{~}
\frac{1}{4}\left(X_{-2\alpha}X_{2\alpha} + X_{-2\alpha}\tilde{x}_{2\alpha} + \tilde{x}_{-2\alpha}X_{2\alpha} + \tilde{x}_{-2\alpha}\tilde{x}_{2\alpha}\right) -\frac{1}{8}\left(X_{-\alpha}X_{\alpha} + X_{-\alpha}\tilde{x}_{\alpha} + \tilde{x}_{-\alpha}X_{\alpha} + \tilde{x}_{-\alpha}\tilde{x}_{\alpha}\right)\notag \\
& \enskip + \frac{1}{16}\left(H^{2} + 2H\tilde{h} + \tilde{h}^{2}\right) - \frac{1}{8}\left(H + \tilde{h}\right) + \frac{1}{16} \notag\\
\phantom{~} &\overset{\textup{\Romanbar{II}}}{\equiv} \phantom{~}
\frac{1}{4} \tilde{x}_{-2\alpha}\tilde{x}_{2\alpha} -\frac{1}{8} \tilde{x}_{-\alpha}\tilde{x}_{\alpha}
+ \frac{1}{16}\left(H^{2} + 2H\tilde{h} + \tilde{h}^{2}\right) - \frac{1}{8}\left(H + \tilde{h}\right) + \frac{1}{16}
\end{align*}
and
\begin{align*} 
1 \otimes C \phantom{~} &\overset{\textup{\Romanbar{II}}}{\equiv} \phantom{~} \frac{1}{4}\left(X_{-2\alpha} - \tilde{x}_{-2\alpha} \right)\left(X_{2\alpha} - \tilde{x}_{2\alpha} \right)-\frac{1}{8}\left(X_{-\alpha} - \tilde{x}_{-\alpha} \right)\left(X_{\alpha} - \tilde{x}_{\alpha} \right)\\ \notag 
& \enskip + \frac{1}{16}\left(H - \tilde{h}\right)\left(H - \tilde{h}\right) - \frac{1}{8}\left(H - \tilde{h}\right) + \frac{1}{16} \notag\\
\phantom{~} &\overset{\textup{\Romanbar{II}}}{\equiv} \phantom{~}
\frac{1}{4}\left(X_{-2\alpha}X_{2\alpha} - X_{-2\alpha}\tilde{x}_{2\alpha} - \tilde{x}_{-2\alpha}X_{2\alpha} + \tilde{x}_{-2\alpha}\tilde{x}_{2\alpha}\right) -\frac{1}{8}\left(X_{-\alpha}X_{\alpha} - X_{-\alpha}\tilde{x}_{\alpha} - \tilde{x}_{-\alpha}X_{\alpha} + \tilde{x}_{-\alpha}\tilde{x}_{\alpha}\right)\notag \\
& \enskip + \frac{1}{16}\left(H^{2} - 2H\tilde{h} + \tilde{h}^{2}\right) - \frac{1}{8}\left(H - \tilde{h}\right) + \frac{1}{16} \notag\\
\phantom{~} &\overset{\textup{\Romanbar{II}}}{\equiv} \phantom{~}
\frac{1}{4} \tilde{x}_{-2\alpha}\tilde{x}_{2\alpha} -\frac{1}{8} \tilde{x}_{-\alpha}\tilde{x}_{\alpha}
+ \frac{1}{16}\left(H^{2} - 2H\tilde{h} + \tilde{h}^{2}\right) - \frac{1}{8}\left(H - \tilde{h}\right) + \frac{1}{16}.
\end{align*}
Therefore,
\begin{equation}\label{eq:CCnegtilde}
\bar{\CC}_{-} = \frac{1}{4}(H-1)\tilde{h} + \textup{\Romanbar{II}},\end{equation} and
\begin{equation}\label{eq:CCpostilde}
\bar{\CC}_{+} = \frac{1}{2} \tilde{x}_{-2\alpha}\tilde{x}_{2\alpha} -\frac{1}{4} \tilde{x}_{-\alpha}\tilde{x}_{\alpha}
+ \frac{1}{8}\left(H^{2} + \tilde{h}^{2}\right) - \frac{1}{4}H + \frac{1}{8} + \textup{\Romanbar{II}}.
\end{equation}
Applying \cite[Lemma 3.2]{hartwigDiagonalReductionAlgebra2022a}
and multiplying by $8$ yields \eqref{eq:CCnegtilde} $\rightarrow$ \eqref{eq:CCnegdiam} and \eqref{eq:CCpostilde} $\rightarrow$ \eqref{eq:CCposdiam}.
\end{proof}

From here on, let
\begin{equation}
Q^{(2)}  = 16\bar{\CL} =  4\frac{H-2}{H-1} \bar x_{-2\al} \bDiamond \bar x_{2\al} - \left(2(H-2) + \frac{1}{H-1}\right) \bar x_{-\al} \bDiamond \bar x_{\al} - \bar{h}\bDiamond\bar{h} + (H-1)^{2}. \label{eq:Qdiam}
\end{equation}
It has already been shown \cite{hartwigDiagonalReductionAlgebra2022a} that 
$Q^{(2)}$ (in \emph{loc. cit.} denoted $16\bar\CL$)
is the Scasimir analogue in the diagonal reduction algebra $A$ for $\Fosp(1|2)$. In other words, $Q^{(2)}$ is an even anti-central element whose square is a Casimir operator. Moreover, $Q^{(2)}$
is unique up to a nonzero complex scalar multiple.

We will revisit \eqref{eq:CCnegdiam}, \eqref{eq:CCposdiam}, and \eqref{eq:Qdiam}, after defining a Harish-Chandra homomorphism, to provide generators of the ghost center described below. 

\begin{Remark}
The next couple of sections will feature unadorned elements of $A$ except for the normalized generators of the Mickelsson algebra from Theorem \ref{thm:nonlocalgen}. In particular, the generators of $A$ in Theorem \ref{thm:presentation} will be written as $x_{-2\al},\, x_{-\al},\, h,\, x_\al,\, x_{2\al}$. Likewise, we will write products in $A$ as $xy$ instead of $x \bDiamond y$ from here on. Basically, we drop bars and rock hats like the old Jay-Z. \cite[2nd line of 2nd verse]{jay-zfeat.aliciakeysEmpireStateMind2009}
\end{Remark}

\subsection{Harish-Chandra homomorphism}
Let $B$ be the $R$-subring of $A$ generated by $\{x_\al,x_{2\al},h\}$.
By the PBW theorem (Theorem \ref{thm:PBW}), $B$ has a left $R$-basis given by monomials $x_{\al}^px_{2\al}^qh^r$ where $p,q,r\in\Z_{\ge 0}, p\le 1$.

Let $A^H$ denote the centralizer of $H$ in $A$. By the PBW theorem (Theorem \ref{thm:PBW}), $A^H$ is a free left $R$-module with basis given by monomials
\begin{equation}\label{eq:centralizer-monomials}
x_{-2\al}^px_{-\al}^q h^r x_\al^q x_{2\al}^p,\qquad q\in\{0,1\}, p,r\in\Z_{\ge 0}.
\end{equation}
Note that this implies that $A^H$ is contained in the even subalgebra $A_{\bar 0}$ of $A$.
Consider the left ideal $J$ of $A^H$ defined by
\begin{equation}
J=(Ax_\al+Ax_{2\al})\cap A^H.
\end{equation}
An arbitrary element of $J$ is a linear combination of monomials
\eqref{eq:centralizer-monomials} with $\min\{p,q\}>0$. This shows that
\begin{equation}
J=(x_{-\al}A+x_{-2\al}A)\cap A^H.
\end{equation}
Consequently $J$ is a two-sided ideal of the centralizer $A^H$ and, moreover
\begin{equation}\label{eq:HcentralizerDecomp}
A^H = R[h]\oplus J.
\end{equation}

\begin{Definition}
The \emph{Harish-Chandra homomorphism} \begin{equation}\label{eq:HC}
\varphi:A^H\to R[h]
\end{equation}
is the projection onto $R[h]$ with kernel $J$.
\end{Definition}
Since $J$ is a two-sided ideal, $\varphi$ is a $\C$-algebra homomorhpism along with being a map of $R$-rings.
\begin{Example}
$\varphi(C^{(1)}) = 2(H-1)h$ and $\varphi(Q^{(2)})=(H-1)^2-h^2$ by \eqref{eq:CCnegdiam} and \eqref{eq:Qdiam}, respectively.
\end{Example}

\subsection{Verma modules}\label{ssec:Verma}
\begin{Definition}\label{def:parityreversal}
We use $V\Pi$ to denote the \emph{right parity reversing functor} \cite[Section 4.1]{landweberRepresentationRingsLie2006} on a super module V. 
For an $(A,R)$-bimodule $V$, there is a natural action on $V\Pi$: We can view $V\Pi$ as the $(A,R)$-bimodule $V\otimes_R R^{0|1}$ under $a.(v\otimes r) = (av) \otimes r$. Duly noted, $R$ here is considered a purely even ground ring/algebra. The modules $V$ and $V\Pi$ appear below in the special case of Verma modules.
\end{Definition}
For $\la\in R$, let $R_\la$ be the purely even space $R$ equipped with the $(B,R)$-bimodule structure
$x_\al.r = x_{2\al}.r = 0$, $h.r = \la r$, $f.r=fr$ for all $r\in R$, $f\in R$. Here we define the Verma module $M(\la,\overline{0})$ to be the induced module
$M(\la,\overline{0}) = A\otimes_B R_\la$. At the risk of protraction, we state the action of $A$ on decomposable tensors of  $M(\la,\overline{0})$ by $a.(x\otimes r) = ax \otimes r$, for $a, x \in A$, $r \in R_{\lambda}$. 
Now we appeal to the PBW theorem and Theorem \ref{thm:presentation} and the diagonal action of $H$ and $h$ on the highest weight vector (see Section \ref{sec:TensorProducts} for a complementary discussion, including how generators of $N_{U}(I)/I$ describe the action of $h$ on $A$-modules): The right $B$-module $A$ is free on $\big\{x_{-2\alpha}^{p}, x_{-\alpha}^{q} \mid p \in \Z_{\geq 0}, q \in \{0, 1\}  \big\}$; moreover, $ax \otimes r$ can be expressed as $a' \otimes 1_{R}$, where $a'$ is in $A_{-}$,  the $R$-subring of $A$ generated by $\{x_{-2\alpha}, x_{-\alpha} \}$.

Let $v_\la = 1_A\otimes 1_R \in M(\la,\overline{0})$ be the (even) highest weight vector and $v_{\lambda,p,q} = x_{-2\alpha}^{p} x_{-\alpha}^{q}.v_{\lambda}$ so that 
\begin{equation}\label{eq:Mlambda-basis}
M(\lambda,\overline{0}) = \bigoplus_{\substack{p\in\Z_{\geq 0} \\ q \in \{0,1\}}} R v_{\lambda,p,q} ,\qquad   |v_{\lambda,p,q}| = \bar{q} \in \Z_{2}.
\end{equation}
When $\lambda\in R^\times$, the central element $\hat h$ acts on $M(\la,\overline{0})$ by multiplication of an invertible element of $R$.
Thus, substituting $h=\frac{1}{H-1}\hat h$ into \eqref{eq:-alphaD-alpha},
we may solve for $x_{-2\alpha}$ in terms of $x_{-\alpha}$ and $\hat h^{-1}$ which yields
\begin{equation}\label{eq:Mlambdanot0-basis}
M(\lambda,\overline{0}) = \underset{q \in \Z_{\geq 0}}{\bigoplus} R
v_{\lambda,0,q} ,\qquad   |v_{\lambda,0,q}| = \bar{q} \in \Z_{2}.
\end{equation}
By Definition \ref{def:parityreversal} we have 
$M(\la,\overline{0})\Pi= (A\otimes_B R_\la) \otimes_R R^{0|1}=A\otimes_B (\Pi R_\la)$. In this module, which we call $M(\la,\overline{1})$, the highest weight vector $v_{\lambda}$ will be odd.
Note that $M(\la,\overline{0}) \cong M(\la,\overline{1})$ as $A$-modules \cite{gorelikMinimalPrimitiveSpectrum2000} (if we allow odd intertwining operators). Therefore, from now on we choose to focus on 
\[
M(\lambda) = M(\la,\overline{0})
\]
as ``the'' universal highest weight $A$-module of weight $\lambda$.

We stress that the right tensor factor of $M(\lambda)$, namely $R_\la$, is naturally a $(B,R)$-bimodule and thus the Verma module is an $(A,R)$-bimodule. We will use the right $R$-module structure on $M(\la)$ below.

\subsection{Return of the ghost center}\label{sec:return-ghost-center}
We suggest to keep in mind that $A$ is both a $\C$-superalgebra and a finitely-generated $R$-superring.
\begin{Definition} 
Gorelik \cite{gorelikGhostCentreLie2000} defined the ghost center of the enveloping algebra of a Lie superalgebra. The same definition (focusing on the product instead of adjoints) works for associative superalgebras and superrings.
The \emph{center} of an associative superalgebra $B$ is $\{a\in B\mid [a,b]=0,\;\forall b\in B\}$ where $[\cdot,\cdot]$ denotes the super commutator.
The even part $\antiZ_{\bar{0}}(B)$ of the \emph{anti-center} of $B$ consists of the even elements of $B$ which anti-commute with odd elements and commute with even elements inside $B$. The odd part $\antiZ_{\bar{1}}(B)$ of the anti-center of $B$ consists of the odd elements of $B$ which commute with all of $B$.
The \emph{ghost center} is the sum $Z \oplus \antiZ$ of the center and the anti-center. We let $\ghostZ(B)$ denote the ghost center of $B$.
\end{Definition}
Note that the ghost center is a $(\Z_{2})\times(\Z_{2})$-graded algebra with $\ghostZ_{(i,\bar 0)}=Z_i$ and $\ghostZ_{(i,\bar 1)}=\antiZ_i$. If $z\in\ghostZ_{(i,j)}$, then $i$ is the usual parity of $z$ denoted $|z|$, while $j$ is a new parity we call \emph{ghost parity} and is denoted by $\{z\}$. Thus $\{z\}=\bar{0}$ if $z$ is central, and $\{z\}=\bar{1}$ if $z$ is anti-central. For a given associative superalgebra $B$ under consideration as an associative (non-super) algebra, the center is the sum of elements whose standard parity equals their ghost parity. That is, $Z_{\catname{Alg}}(B) = \ghostZ(B)_{(\bar{0}, \bar{0})} \oplus \ghostZ(B)_{(\bar{1},\bar{1})}$, where $Z_{\catname{Alg}}$ identifies the center of an object in the category of $\C$-algebras.
The ghost center of $A$ is not an $R$-subring: While $\hat h$ is in the ghost center (being central), $H\hat h$ is not, since $x_{\alpha}H\hat h - H\hat{h}x_{\alpha} = (H+1)\hat{h}x_{\alpha} - H\hat{h}x_{\alpha} =\hat h x_{\alpha}$ in $A$.

We observe that any element of the ghost center of our diagonal reduction algebra $A$ has to commute with the even element $H$ and thus belongs to the centralizer $A^H$. In particular, by \eqref{eq:centralizer-monomials}, the ghost center is a purely even subalgebra of $A$. That is, $\ghostZ(A)=\ghostZ(A)_{(\bar 0,\bar 0)}\oplus\ghostZ(A)_{(\bar 0,\bar 1)}$.

Now for a moment consider only the associative algebra structure of $A$. The two preceding paragraphs give a description of $Z_{\catname{Alg}}(A)$ analogous to \cite[Remark 3.5.1]{gorelikGhostCentreLie2000}: $Z_{\catname{Alg}}(A)= \ghostZ(A)_{\bar{0},\bar{0}}$.

The introduction of the ghost center of $A$ aligns with our strategy to fulfill both Goals \ref{Goal1} and \ref{Goal2}.
\begin{Lemma}\label{lem:AmoduleAction}
\text{}
\begin{enumerate}[{\rm (i)}]
\item If $v$ is any vector in an $A$-module such that $x_\al .v= x_{2\al} .v = 0$, then
\begin{equation}\label{eq:z-on-singular}
a.v = \varphi(a).v,\quad \forall a\in A^H. 
\end{equation}
\item Let $z$ be either a central or anti-central element of $A$. Let $\la\in R$ be any dynamical scalar. Put $z_0(H,\hat h)=\varphi(z)\in R[\hat h]$.

Then we have
\begin{equation}\label{eq:z-vs-HC}
z.u = (-1)^{\{z\} |u|}u.z_0(H,\hat\la) \quad \forall u\in M(\la).
\end{equation}
where $\hat\la=(H-1)\la$.
\end{enumerate}
\end{Lemma}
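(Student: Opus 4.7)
The plan is to prove part (i) directly from the defining decomposition of $\varphi$, then bootstrap part (ii) by first verifying it on the highest weight vector $v_\la$ and propagating the identity across $M(\la)$ via the (anti-)commutation of $z$ with $a\in A$.

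For part (i), the decomposition \eqref{eq:HcentralizerDecomp} writes $a = \varphi(a) + j$ with $j \in J \subseteq A x_\al + A x_{2\al}$. Thus $j = b_1 x_\al + b_2 x_{2\al}$ for some $b_1,b_2 \in A$, and the annihilation hypothesis on $v$ yields $j.v = b_1.(x_\al.v)+b_2.(x_{2\al}.v) = 0$, whence $a.v = \varphi(a).v$.

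For part (ii), the crux is to translate the left action of $z$ on $v_\la$ into right $R$-multiplication by $z_0(H,\hat\la)$. Since the ghost center is contained in $A^H$ (as $H$ is even, every ghost-central element must commute with $H$) and $x_\al.v_\la = x_{2\al}.v_\la = 0$ by construction, part (i) gives $z.v_\la = \varphi(z).v_\la = z_0(H,\hat h).v_\la$. The bimodule structure on $M(\la) = A \otimes_B R_\la$ satisfies $r.v_\la = v_\la \cdot r$ for every $r\in R$, and $h.v_\la = v_\la \cdot \la$; hence $\hat h.v_\la = (H-1)h.v_\la = v_\la \cdot \hat\la$. Iterating and using the commutativity of the left $A$-action and right $R$-action on $M(\la)$, I get $\hat h^n.v_\la = v_\la \cdot \hat\la^n$. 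Writing $z_0(H,\hat h) = \sum_n f_n(H)\hat h^n$ with $f_n \in R$, which is legitimate because $\hat h$ is central by \eqref{eq:relh}, so $R[\hat h]$ is a commutative polynomial subring of $A$, and applying the same commutativity, I obtain $z_0(H,\hat h).v_\la = v_\la \cdot z_0(H,\hat\la)$, confirming \eqref{eq:z-vs-HC} for $u = v_\la$ since $|v_\la| = \bar 0$.

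To extend to arbitrary $u \in M(\la)$, I note that $M(\la)$ is generated as a right $R$-module by vectors of the form $a.v_\la$ with $a \in A$ homogeneous, and both sides of \eqref{eq:z-vs-HC} are right $R$-linear in $u$ (since $z_0(H,\hat\la) \in R$ and the bimodule actions commute). Reducing to $u = a.v_\la$, for which $|u| = |a|$, the purely even nature of the ghost center lets me express the (anti-)commutation of $z$ with $a$ uniformly as $za = (-1)^{\{z\}|a|} az$, covering both the central case ($\{z\} = \bar 0$, $za = az$) and the anti-central case ($\{z\} = \bar 1$, $za = (-1)^{|a|}az$). Then
\begin{equation*}
z.u = (za).v_\la = (-1)^{\{z\}|a|} a.(z.v_\la) = (-1)^{\{z\}|a|} a.\bigl(v_\la \cdot z_0(H,\hat\la)\bigr) = (-1)^{\{z\}|u|} u \cdot z_0(H,\hat\la),
\end{equation*}
as claimed. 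The main obstacle is bookkeeping: juggling the dynamical-bimodule framework (left $A$-action versus right $R$-action) and the two parities, and recognizing that the transfer of $\hat h$ from left-acting in $A$ to right-multiplying as $\hat\la$ in $R$ is unambiguously controlled by the centrality of $\hat h$ together with the dynamical action $r.v_\la = v_\la\cdot r$ for $r \in R$.
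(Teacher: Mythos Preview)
Your proof is correct and follows essentially the same approach as the paper's: both prove part~(i) directly from the decomposition $A^H=R[h]\oplus J$, and for part~(ii) both first apply (i) to $v_\la$, convert $\hat h$ to $\hat\la$ via the bimodule action, and then propagate to arbitrary $u$ by moving $z$ past a homogeneous element with the sign $(-1)^{\{z\}|u|}$. The only cosmetic difference is that the paper reduces to the specific PBW monomials $u=x_{-2\al}^p x_{-\al}^q.v_\la$, whereas you work with an arbitrary homogeneous $a\in A$; your version is slightly more explicit about the bimodule bookkeeping, but the substance is identical.
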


\begin{proof}
(i) Noting \eqref{eq:HcentralizerDecomp}, $a \in R[h] \oplus J$. Assumptions on $v$ imply $J.v = 0$. Thus the result now follows from the definition of the Harish-Chandra homomorphism $\varphi$. 

(ii) It suffices to show the identity when $u=x_{-2\al}^px_{-\al}^q .v_\la$ for some $p\in\Z_{\ge 0}, q\in\{0,1\}$. Then we have
\begin{align*}
z.u &= (-1)^{\{z\} |u|} x_{-2\al}^px_{-\al}^q z.v_\la\\
&\overset{(i)}{=} 
(-1)^{\{z\} |u|} x_{-2\al}^px_{-\al}^q z_0(H,\hat h) .v_\la\\
&=
(-1)^{\{z\} |u|} x_{-2\al}^px_{-\al}^q z_0(H,\hat\la) .v_\la \qquad\text{since $\hat h .v_\la=\hat\la .v_\la$,}\\
&=(-1)^{\{z\} |u|} u. z_0(H,\hat \la).
\end{align*}
\end{proof}

\subsection{Injectivity of the restriction of the Harish-Chandra homomorphism to the ghost center}

\begin{Proposition}
The restriction of the Harish-Chandra homomorphism $\varphi:A^H\to R[\hat h]$ to the ghost center of $A$ is injective.
\end{Proposition}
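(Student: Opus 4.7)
The plan is to combine Lemma \ref{lem:AmoduleAction}(ii) with an induction that extracts each PBW coefficient of $z$ from its action on Verma modules. Suppose $z \in \ghostZ(A)$ satisfies $\varphi(z) = 0$. By Lemma \ref{lem:AmoduleAction}(ii), since $z_0 = \varphi(z) = 0$, the element $z$ acts as the zero operator on every Verma module $M(\la)$, $\la \in R$. The goal is to leverage this vanishing on all $M(\la)$ (for varying $\la$) to force $z = 0$ in $A$.

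Expand $z$ in the PBW basis \eqref{eq:centralizer-monomials} of $A^H$:
\[
z = \sum_{\substack{p,r \geq 0 \\ q \in \{0,1\}}} c_{p,q,r}(H)\, x_{-2\al}^p x_{-\al}^q h^r x_\al^q x_{2\al}^p,
\]
where $c_{0,0,r}(H) = 0$ for all $r$ by hypothesis. For each $n \geq 1$, there is a unique admissible pair $(p_n, q_n)$ with $2p_n + q_n = n$ and $q_n \in \{0,1\}$ (determined by the parity of $n$). I would argue by induction on $n$ that $c_{p_n, q_n, r}(H) = 0$ for every $r$.

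For the inductive step at $n$, consider $w_n := x_{-2\al}^{p_n} x_{-\al}^{q_n} . v_\la \in M(\la)$, the $R$-generator of the rank-one weight-$(\la+n)$ subspace of $M(\la)$ (via \eqref{eq:Mlambda-basis}). Analyze $z . w_n = 0$ summand-by-summand. A summand of $z$ indexed by $(p', q', r')$ with $2p' + q' > n$ contributes zero, because $x_\al^{q'} x_{2\al}^{p'} . w_n$ would sit below the lowest $h$-weight $\la$ of $M(\la)$ and hence vanishes. A summand with $2p' + q' < n$ vanishes by the inductive hypothesis. Only the $(p_n, q_n)$-summand survives, yielding
\[
0 = z . w_n = w_n \cdot \Phi_n(H, \la) \sum_r c_{p_n, q_n, r}(H + n)\, \la^r,
\]
where $\Phi_n(H, \la) \in R[\la]$ is the ``Shapovalov-type'' coefficient defined by $x_\al^{q_n} x_{2\al}^{p_n} x_{-2\al}^{p_n} x_{-\al}^{q_n} . v_\la = v_\la \cdot \Phi_n(H, \la)$. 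Since $w_n$ is a free $R$-generator of its weight subspace, and assuming the non-vanishing $\Phi_n \neq 0$ in $R[\la]$, letting $\la$ vary over $R$ (a commutative domain, so an infinite set suffices) forces each $c_{p_n, q_n, r}(H + n) = 0$, hence $c_{p_n, q_n, r}(H) = 0$, completing the induction.

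The main obstacle is establishing that $\Phi_n(H, \la) \neq 0$ in $R[\la]$ for every $n \geq 1$. Direct application of the relations in Theorem \ref{thm:presentation} handles the small cases: for $(p_1, q_1) = (0, 1)$ one computes $\Phi_1(H, \la) = (H^2 - \la^2)/H$, and for $(p_2, q_2) = (1, 0)$ one computes $\Phi_2(H, \la) = (\la^2 - H^2)/(H+1)$, both manifestly nonzero. For general $n$, $\Phi_n$ can be computed iteratively using the reduction formulas in Lemma \ref{lem:tech}(i) and (iii), which have nonzero leading scalar factors at each stage, yielding an explicit expression for $\Phi_n$ as a nonzero product of polynomial factors in $H$ and $\la$.
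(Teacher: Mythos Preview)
Your overall approach is the same as the paper's: act with $z$ on weight vectors of Verma modules, observe that only the summand of matching degree survives (you phrase this as induction on $n$, the paper as minimality of the index $N$ --- these are equivalent), and conclude that the surviving coefficient vanishes provided the corresponding ``Shapovalov coefficient'' $\Phi_n$ is nonzero.

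The point where your sketch is incomplete is the non-vanishing of $\Phi_n$. Lemma~\ref{lem:tech}(i) and (iii) do not directly handle the mixed product $x_\al^{q_n} x_{2\al}^{p_n} x_{-2\al}^{p_n} x_{-\al}^{q_n}$: part~(i) reduces $\hat x_{2\al}\hat x_{-2\al}^m$ only modulo $A\hat x_\al+A\hat x_{2\al}+A\hat h$ (so it determines $\Phi_n$ only after specializing $\hat h\mapsto 0$, i.e.\ at $\la=0$), and part~(iii) treats $\hat x_\al\hat x_{-\al}^m$, not products with $x_{\pm 2\al}$ sandwiched inside. The paper resolves this by multiplying through by $\hat h^{2p}$: since $\hat h$ is central and relations \eqref{eq:alphaDalpha}, \eqref{eq:-alphaD-alpha} give $\hat h\, x_{\pm 2\al} = (\text{unit in }R)\, x_{\pm\al}^2$, this converts $x_\al^q x_{2\al}^p\cdot x_{-2\al}^p x_{-\al}^q$ into $g_N(H)\, x_\al^N x_{-\al}^N$, after which Lemma~\ref{lem:tech}(ii) applies iteratively and produces an explicit polynomial in $\hat h$ of degree $2N$, hence nonzero for some $\la$. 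Your argument can be completed along the same lines (or, alternatively, by specializing to $\la=0$, where $h$ acts by zero on all of $M(0)$ so the cross-terms in \eqref{eq:2alphaD-alpha} and \eqref{eq:alphaD-2alpha} drop out and Lemma~\ref{lem:tech}(i) suffices), but the ``iterative reduction via Lemma~\ref{lem:tech}(i),(iii)'' you describe does not go through without this extra conversion step.
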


\begin{proof}
Let $z$ be an element of the ghost center such that $\varphi(z)=0$. Suppose $z\neq 0$ and write
\[z=\sum_{n=N}^\infty C_n(H,\hat h) x_{-2\al}^px_{-\al}^q x_\al^q x_{2\al}^p,\]
where $C_n(H,\hat h)\in R[\hat h]$ (at most finitely many nonzero) and $C_N\neq 0$, $N>0$, and we write $n=2p+q$, $p\in\Z_{\ge 0}$, $q\in\{0,1\}$. Consider an arbitrary Verma module $M(\la)$, $\la\in R$. Let $n=N=2p+q$ and consider the vector
\[v_N=x_{-2\al}^px_{-\al}^q\otimes 1\in M(\la).\]
Then we have
\[
z.v_N = C_N(H,\hat h)x_{-2\al}^px_{-\al}^q x_\al^qx_{2\al}^p .(x_{-2\al}^px_{-\al}^q\otimes 1),
\]
since $x_{2\al}^rx_\al^s.v_N=0$ if $2r+s>2p+q$ for root gradation reasons.
By \eqref{eq:alphaDalpha} and \eqref{eq:-alphaD-alpha}, $\frac{2}{H(H-1)}\hat h x_{2\al} = x_\al^2$ and $\frac{-2}{(H-2)(H-3)}\hat h x_{-2\al} = x_{-\al}^2$, and therefore,
there is a rational function $g_N(H)$ such that
\begin{align}
\hat h^{2p} z.v_N &= C_N(H,\hat h)g_N(H) x_{-2\al}^px_{-\al}^qx_\al^Nx_{-\al}^N \otimes 1\nonumber \\
&\hspace{-1em}\overset{Lem.\ref{lem:tech}}{=} g_N(H)x_{-2\al}^px_{-\al}^qC_N(H+N,\hat h)F_N(H+n-1,\hat h)F_{N-1}(H+n-2,\hat h)\cdots F_1(H,\hat h)\otimes 1\nonumber \\
&=g_N(H)x_{-2\al}^px_{-\al}^q\otimes 1. C_N(H+N,\hat\la)F_N(H-n-1,\hat \la)\cdots F_1(H,\hat\la),\label{eq:122}
\end{align}
where $\hat\la = (H-1)\la$.
By Lemma \ref{lem:tech}, the product
\[F_N(H_n-1,\hat h)F_{N-1}(H+n-2,\hat h)\cdots F_1(H,\hat h)\]
is an element of $R[\hat h]$ of $\hat h$-degree $2N$. Thus the right hand side of \eqref{eq:122} is nonzero for at least one $\la$. On the other hand, by \eqref{eq:z-vs-HC} 
\[
\hat h^{2p} z.v_N = 0
\]
because $z_0=\varphi(z)=0$. This is a contradiction. Thus $\varphi(z)=0$ implies $z=0$.
\end{proof}

\subsection{Generators of the ghost center}

The following important lemma shows that elements of the image of the Harish-Chandra map satisfy a functional equation. This will reformulated in the subsequent lemma as (relative) invariance under an action of the group $(\Z_{2})^2$.

\begin{Lemma}
Let $z$ be a homogeneous element of the ghost center of ghost parity $\{z\}$, and let $z_0(H,\hat h)=\varphi(z)$ be the image of $z$ in $R[\hat h]$ under the Harish-Chandra homomorphism \eqref{eq:HC}.
Then for every odd integer $n\ge 1$ and every $\epsilon\in\{\pm\}$ the following functional equation holds:
\begin{equation}\label{eq:Zsymmetry}
z_0\big(H+n, \epsilon(H+n-1)(H-1)\big)=(-1)^{\{z\}}z_0\big(H,\epsilon(H+n-1)(H-1)\big).
\end{equation}
\end{Lemma}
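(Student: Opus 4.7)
The plan is to use Verma modules and singular vectors: for each odd $n\ge 1$ and each $\epsilon\in\{\pm\}$, I will exhibit a nonzero singular vector $w$ at level $n$ in the Verma module $M(\la)$ for $\la = \epsilon(H+n-1)$, then extract the functional equation by equating two formulas for $z.w$.

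Take $w = \hat x_{-\al}^n v_\la$, which by the PBW theorem is a nonzero $R$-scalar multiple of a basis element at level $n$. To verify $\hat x_\al.w = 0$, apply Lemma \ref{lem:tech}(iii): since $v_\la$ is annihilated by $\hat x_\al$ and $\hat x_{2\al}$, the ``mod'' terms in that congruence kill $v_\la$, leaving $\hat x_\al.w = \hat F_n(H,\hat h)\hat x_{-\al}^{n-1}v_\la$. For odd $n$, $\hat F_n = H^2(H-n)^2 - \hat h^2$; commuting $H^2(H-n)^2$ through $\hat x_{-\al}^{n-1}$ using \eqref{eq:xf} turns it into $(H+n-1)^2(H-1)^2$, while the central element $\hat h$ passes unchanged and is replaced by its eigenvalue $\hat\la = (H-1)\la$ on $v_\la$. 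The resulting scalar $(H+n-1)^2(H-1)^2 - \hat\la^2$ vanishes identically when $\hat\la = \epsilon(H+n-1)(H-1)$, so $x_\al.w = 0$. To also get $x_{2\al}.w = 0$, use the Mickelsson-algebra relation $\hat x_\al^2 = 2\hat h \hat x_{2\al}$ from \eqref{eq:rel11}: applying both sides to $w$ yields $0 = 2\hat h\hat x_{2\al}.w$, and since $\la \in R^\times$ the central element $\hat h$ acts by a unit of $R$ on each weight space of $M(\la)$, forcing $\hat x_{2\al}.w = 0$. Hence $w$ is singular.

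Now compute $z.w$ in two ways. By Lemma \ref{lem:AmoduleAction}(ii), $z.w = (-1)^{\{z\}|w|}w.z_0(H,\hat\la) = (-1)^{\{z\}}w.z_0(H,\hat\la)$, using $|w| = \bar n = \bar 1$. By Lemma \ref{lem:AmoduleAction}(i) applied to the singular $w$ with $a = z \in A^H$, we get $z.w = \varphi(z).w = z_0(H,\hat h).w$. Writing $z_0(H,\hat h) = \sum_i g_i(H)\hat h^i$, the identity $f(H)\hat x_{-\al}^n = \hat x_{-\al}^n f(H+n)$ from \eqref{eq:xf} together with the eigenvalue $\hat h.v_\la = \hat\la v_\la$ (and centrality of $\hat h$) collapse this to $w.z_0(H+n,\hat\la)$. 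Equating and cancelling the nonzero $w$ in the free right $R$-module $M(\la)$ yields $z_0(H+n,\hat\la) = (-1)^{\{z\}}z_0(H,\hat\la)$, i.e., the claimed identity with $\hat\la = \epsilon(H+n-1)(H-1)$.

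The main obstacle is the singular-vector construction. Lemma \ref{lem:tech}(iii) supplies $x_\al.w = 0$ directly, but $x_{2\al}.w = 0$ is not contained in that lemma; the decisive step is the bootstrap via \eqref{eq:rel11} and the invertibility of $\hat h$ on $M(\la)$, which in turn is where the assumption $\la = \epsilon(H+n-1) \in R^\times$ is essential.
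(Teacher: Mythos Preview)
Your proof is correct and follows essentially the same route as the paper: construct a singular vector $w=\hat x_{-\al}^n v_\la$ in $M(\la)$ for $\la=\epsilon(H+n-1)$ using Lemma~\ref{lem:tech} and the relation $\hat x_\al^2=2\hat h\hat x_{2\al}$, then compare the two expressions for $z.w$ coming from Lemma~\ref{lem:AmoduleAction}(i) and (ii). The only cosmetic difference is that the paper works with the bar generators $\bar x_{-\al}$ and relation \eqref{eq:alphaDalpha} rather than the hat generators and \eqref{eq:rel11}, which amounts to the same computation up to units in $R$.
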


\begin{proof}
Let $n$ be an odd positive integer and consider the Verma module $M(\la)$ where $\la=\epsilon(H+n-1)$. Put $\hat\la=\la(H-1)= \epsilon(H+n-1)(H-1)$.
Then, by the explicit formula for $F_n(H,\hat h)$ given in \eqref{eq:Fn-formula}, we have
\[
F_n(H+n-1,\hat\lambda)=0.
\]
With the previous equation and setting $v_\la^{(n)} = x_{-\al}^n.v_\la$, 
we will show $x_{\alpha}.v_{\lambda}^{(n)} = 0 = x_{2\alpha}.v_{\lambda}^{(n)}$. That is to say, $v_\la^{(n)}$ is a singular or primitive vector (a vector annihilated by $Ax_\al+Ax_{2\al}$). 

In $M(\la)$ we have
\begin{align*}
x_\al.v_\la^{(n)} &=
x_\al x_{-\al}^n.v_\la \\
&=F_n(H,\hat h)x_{-\al}^{n-1}.v_\la\\
&=x_{-\al}^{n-1}F_n(H+n-1,\hat h).v_\la\\
&=x_{-\al}^{n-1}F_n(H+n-1,\hat \la).v_\la\\
&=0.
\end{align*}
Since $\hat\la\neq 0$ and \eqref{eq:alphaDalpha} states $x_\al x_\al= \frac{2}{H}hx_{2\alpha} = \frac{2}{H(H-1)}\hat hx_{2\al}$, the above also implies that $x_{2\al}.v_\la^{(n)}=0$. Consequently, we may apply \eqref{eq:z-on-singular} of the Harish-Chandra homomorphism:). 

Let $z$ be a (homogeneous) element of the ghost center and put $z_0(H,\hat h)=\varphi(z)$. Then
\begin{align}
z.v_\la^{(n)} &= z_0(H,\hat h).v_\la^{(n)}\nonumber\\
&= x_{-\al}^n z_0(H+n,\hat h).v_\la\nonumber\\
&= v_\la^{(n)} . z_0\big(H+n,\epsilon (H+n-1)(H-1)\big).\label{eq:z-on-sing1}
\end{align}
On the other hand, since $z$ is ghost central, by \eqref{eq:z-vs-HC} and that $n$ is odd,
\begin{equation}\label{eq:z-on-sing2}
z.v_\la^{(n)} = (-1)^{\{z\}}v_\la^{(n)}.z_0\big(H,\epsilon(H+n-1)(H-1)\big).
\end{equation}
Since $v_\la^{(n)}$ is a nonzero element of the Verma module, \eqref{eq:z-on-sing1} and \eqref{eq:z-on-sing2} imply \eqref{eq:Zsymmetry}.
\end{proof}

\begin{Lemma} \label{lem:ghost-center}
Put $H'=H-1$.
Let $G$ be the order $4$ subgroup of the $\C$-automorphism group of the field $\C(H,h)$ generated by two commuting order two elements $\sigma_+, \sigma_-$ given by
\begin{align*}
\sigma_\epsilon(h) &=\epsilon H',\\
\sigma_\epsilon(H')&=\epsilon h.
\end{align*}
for $\epsilon\in\{\pm\}$.
Let $\chi:G\to \C^\ast$ be the linear group character given by $\chi(\si_+)=\chi(\si_-)=-1$.
Let $\C(H,h)^G$ be the space of $G$-invariants and $\C(H,h)^G_\chi=\{f\in\C(H,h)\mid g(f)=\chi(g)f\;\forall g\in G\}$ be the space of relative invariants with respect to $\chi$.
Let $Z$ (respectively $\antiZ$) be the center (respectively anti-center) of $A$.
Then 
\begin{enumerate}[{\rm (i)}]
\item $\varphi(Z)\subseteq \C(H,h)^G \cap R[\hat h]=\C[(H')^2+h^{2}, H'h]$,
\item $\varphi(\antiZ) \subseteq \C(H,h)^G_\chi\cap R[\hat h]= \C[(H')^2+h^{2}, H'h] \big((H')^2-h^2\big)$.
\end{enumerate}
\end{Lemma}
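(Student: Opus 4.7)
The plan is to interpret \eqref{eq:Zsymmetry} as saying $\varphi(z)$, viewed inside $\C(H,h)$ via $\hat h = H'h$, is (relatively) $G$-invariant, and then to identify the corresponding invariant intersections with $R[\hat h]$ by combining a Zariski density argument with basic polynomial invariant theory.

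For the translation step, I would set $\tilde z(H,h) := z_0(H,(H-1)h) \in \C(H,h)$; this is well-defined since $\hat h = (H-1)h$ is $G$-invariant (one has $\sigma_\epsilon(H'h) = \epsilon h \cdot \epsilon H' = H'h$). The functional equation \eqref{eq:Zsymmetry} rewrites directly as, for every odd $n \ge 1$ and each $\epsilon \in \{\pm\}$,
\[
\tilde z(H+n,\epsilon H') = (-1)^{\{z\}}\tilde z(H,\epsilon(H'+n)).
\]
Using $\sigma_\epsilon(H) = \epsilon h + 1$ and $\sigma_\epsilon(h) = \epsilon H'$, the right-hand side equals $(-1)^{\{z\}}\sigma_\epsilon(\tilde z)(H+n,\epsilon H')$, so $\Phi_\epsilon := \tilde z - (-1)^{\{z\}}\sigma_\epsilon(\tilde z) \in \C(H,h)$ vanishes on the affine line $\{h = \epsilon(H-n-1)\}$ for every odd $n \ge 1$. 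Writing $\Phi_\epsilon = P/Q$ with coprime $P, Q \in \C[H,h]$, the numerator $P$ must vanish on infinitely many distinct lines; since the zero locus of a nonzero polynomial is a finite union of irreducible curves, $P \equiv 0$ and hence $\Phi_\epsilon \equiv 0$. Thus $\tilde z$ is a $\sigma_\epsilon$-eigenvector with eigenvalue $(-1)^{\{z\}}$ for both $\epsilon$, so $\tilde z \in \C(H,h)^G$ when $z \in Z$ and $\tilde z \in \C(H,h)^G_\chi$ when $z \in \antiZ$.

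For the identification step, I first note that since $(H-1)^{-1} \in R$ and $\hat h = (H-1)h$, we have $R[\hat h] = R[h] = \C[H,h]_D$, the localization of $\C[H,h]$ at the multiplicative set $D$ generated by $\{H-n \mid n \in \Z\}$. Suppose $f = g/d \in \C[H,h]_D$ with $g \in \C[H,h]$ and $d \in D$ satisfies $\sigma_+(f) = \pm f$. Clearing denominators yields $g(H,h)\,d(h+1) = \pm g(h+1,H-1)\,d(H)$ in $\C[H,h]$; since $d(H) \in \C[H]$ and $d(h+1) \in \C[h]$ are coprime in the UFD $\C[H,h]$, we deduce $d(H) \mid g(H,h)$, hence $f \in \C[H,h]$. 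Consequently both intersections reduce to the polynomial invariants $\C[H',h]^G$ and $\C[H',h]^G_\chi$, and standard invariant theory for $G = (\Z/2)^2$ acting on $\C H' \oplus \C h$ via the swap $\sigma_+$ and the double negation $\sigma_+\sigma_-$ (by direct computation or Molien's formula) yields $\C[H',h]^G = \C[(H')^2 + h^2,\, H'h]$ and $\C[H',h]^G_\chi = ((H')^2 - h^2)\cdot\C[(H')^2 + h^2,\, H'h]$. The reverse inclusions follow by verifying the generators lie in $R[\hat h]$: $(H')^2 + h^2 = (H-1)^2 + \hat h^2(H-1)^{-2}$, $H'h = \hat h$, and $(H')^2 - h^2 = (H-1)^2 - \hat h^2(H-1)^{-2}$. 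The main obstacle is this coprimality step in the UFD $\C[H,h]$, which is what prevents the localization in $H$ from producing any new $G$-invariants beyond the polynomial ones; the Zariski density argument and the invariant-theoretic bookkeeping are both routine once the identification of \eqref{eq:Zsymmetry} with the $G$-action has been set up correctly.
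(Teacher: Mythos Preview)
Your proof is correct and follows essentially the same strategy as the paper: translate the functional equation \eqref{eq:Zsymmetry} into (relative) $G$-invariance via an ``infinitely many specializations'' argument, then identify the invariants. The only real differences are cosmetic---you work in $(H,h)$-coordinates and phrase the vanishing as ``rational function zero on infinitely many lines,'' whereas the paper stays in $(H,\hat h)$-coordinates and uses ``polynomial in $\hat h$ with infinitely many roots''---and your coprimality argument in $\C[H,h]$ makes the intersection step $\C(H,h)^G\cap R[\hat h]=\C[(H')^2+h^2,H'h]$ more explicit than the paper, which simply asserts it.
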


\begin{proof}
Let $z$ be in the ghost center $\ghostZ(A)$ and $z_0(H,\hat h)=\varphi(z)$.
Fix $\epsilon\in\{\pm\}$ and note that \[\si_\epsilon(H)=1+\si_\epsilon(H')=1+\epsilon h=1+\frac{\epsilon\hat h}{H-1}\]
and
\[\si_\epsilon(\hat h)=\si_\epsilon(H'h)=(\epsilon h)(\epsilon H')=\hat h.\]
Thus we have
\begin{equation}\label{eq:HC-image-lem}
z_0(H,\hat h)-(-1)^{\{z\}}\si_\epsilon\big(z_0(H,\hat h)\big)=
z_0(H,\hat h)-(-1)^{\{z\}}z_0\big(1+
\frac{\epsilon\hat h}{H-1},\hat h\big).
\end{equation}
Substituting $\hat h=\epsilon(H+n-1)(H-1)$, where $n$ is an arbitrary positive odd integer, into \eqref{eq:HC-image-lem} we get
\begin{align*}
&z_0\big(H,\,\epsilon(H+n-1)(H-1)\big)-(-1)^{\{z\}}z_0\big(1+
\epsilon\frac{\epsilon(H+n-1)(H-1)}{H-1},\,\epsilon(H+n-1)(H-1)\big)\\
&=z_0\big(H,\,\epsilon(H+n-1)(H-1)\big)-(-1)^{\{z\}}z_0\big(H+n,\,\epsilon(H+n-1)(H-1)\big)=0
\end{align*}
by \eqref{eq:Zsymmetry}.
Thus, viewed as a polynomial in $\hat h$ with coefficients in $R$, the element $z_0(H,\hat h)-(-1)^{\{z\}}\si_\epsilon\big(z_0(H,\hat h)\big)$ has infinitely many zeros and is thus identically zero (since the coefficient ring $R$ has characteristic zero). This proves the inclusions in (i) and (ii). To prove the equalities, note that $\si_+\si_-(H')=-H'$, $\si_+\si_-(h)=-h$, and thus 
$\C(H',h)^G\subseteq \C\big((H')^{2},H'h,h^{2}\big)$; moreover, since $\chi(\si_+\si_-)=1$, we have the containment $\C(H',h)^G_\chi\subset \C((H')^{2},H'h,h^{2})$, too. Since $G$ is generated by $\si_+\si_-$ and $\si_+$ we now have
\[
\C(H,h)^G =\C\big((H')^2,H'h,h^{2}\big)^{\si_+}=\C\big((H')^{2} + h^{2},H'h\big).
\]
and
\[
\C(H,h)^G_\chi =\C\big((H')^{2},H'h,h^{2}\big)^{\si_+}_\chi\subset\C((H')^{2} + h^{2},H'h,(H')^{2}-h^{2}).
\]
With the reminder that $R[\hat h]=R[h]$, we take intersections with $R[\hat h]$ to yield 
$\C(H,h)^G \cap R[\hat h]=\C[(H')^{2} + h^{2}, H'h]$ and $\C(H,h)^G_\chi\cap R[\hat h]= \C[(H')^{2} + h^{2}, H'h] \big((H')^{2}-h^{2}\big)$. 
\end{proof}

\begin{Lemma}\label{lem:phi-of-C1C2Q2}
We have
\begin{align*}
\varphi\big(C^{(1)}\big) &= 2(H-1)h=2H'h, \\
\varphi\big(C^{(2)}\big) &= (H-1)^2+h^2=(H')^2+h^2,\\
\varphi\big(Q^{(2)}\big) &= (H-1)^2-h^2=(H')^2-h^2.
\end{align*}
\end{Lemma}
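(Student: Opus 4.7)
The plan is to apply the Harish-Chandra homomorphism $\varphi$ directly to the explicit expressions \eqref{eq:CCnegdiam}, \eqref{eq:CCposdiam}, and \eqref{eq:Qdiam}. First, each of $C^{(1)}$, $C^{(2)}$, $Q^{(2)}$ lies in $A^H$, being in the ghost center and hence commuting with the even element $H$, so $\varphi$ is defined on them.

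The key observation is that the mixed monomials $x_{-2\alpha} x_{2\alpha}$ and $x_{-\alpha} x_\alpha$ belong to the kernel $J = (Ax_\alpha + Ax_{2\alpha}) \cap A^H$ of $\varphi$. Indeed, they lie in $Ax_{2\alpha}$ and $Ax_\alpha$ respectively by inspection, and a short check using \eqref{eq:xf} shows they also lie in $A^H$ (their total $H$-grading is zero, so they commute with $H$). Since $J$ is a two-sided ideal of $A^H$, and in particular a left $R$-submodule, any $R$-linear combination of these two products again lies in $J$ and is therefore annihilated by $\varphi$.

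Applying this observation term by term: for $C^{(1)} = 2(H-1)h$ the expression is already in $R[h]$, so $\varphi(C^{(1)}) = 2(H-1)h = 2H'h$. For $C^{(2)}$, the two summands with rational coefficients in front of $x_{-2\alpha} x_{2\alpha}$ and $x_{-\alpha} x_\alpha$ are killed by $\varphi$, leaving $\varphi(C^{(2)}) = h^2 + (H-1)^2 = (H')^2 + h^2$. Similarly for $Q^{(2)}$, the analogous two summands vanish and we obtain $\varphi(Q^{(2)}) = -h^2 + (H-1)^2 = (H')^2 - h^2$.

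There is no substantive obstacle in this proof: once the mixed products are recognized as elements of $J$, everything follows immediately from the definition of $\varphi$ as the projection onto $R[h]$ along $J$. The only routine verification required is that $x_{-\alpha}x_\alpha$ and $x_{-2\alpha}x_{2\alpha}$ commute with $H$, which is transparent from the commutation relation \eqref{eq:xf}.
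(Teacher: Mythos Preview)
Your proof is correct and follows essentially the same approach as the paper: the paper's proof is a one-line observation that, by the definition of $\varphi$, the desired values are obtained by dropping the terms involving $x_{-2\al}x_{2\al}$ and $x_{-\al}x_\al$ from \eqref{eq:CCnegdiam}, \eqref{eq:CCposdiam}, \eqref{eq:Qdiam} and keeping the remaining $R[h]$-terms. You have simply spelled out in more detail why those mixed terms lie in $J=\ker\varphi$.
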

\begin{proof}
By the definition of $\varphi$, the values we seek are given by the last two terms (or the only one in the case of $C^{(1)}$) in each of the explicit formulas \eqref{eq:CCnegdiam},\eqref{eq:CCposdiam},\eqref{eq:Qdiam}.
\end{proof}

We are now ready to prove the first main theorem of the paper (Theorem \ref{thm:MainTheorem1}), which describes the ghost center of $A$.

\begin{proof}[Proof of Theorem \ref{thm:MainTheorem1}]
By Lemma \ref{lem:phi-of-C1C2Q2} and Lemma \ref{lem:ghost-center} the Harish-Chandra homomorphism restricts to an isomorphism of $\C$-algebras
\begin{equation}
\varphi\big|_{\ghostZ(A)}:\ghostZ(A)\to \C[x^2+y^2,\, 2xy,\, x^2-y^2] = \C[x^2+y^2,\, 2xy] \oplus
\C[x^2+y^2,\, 2xy](x^2-y^2)
\end{equation}
where $x=H'=H-1$ and $y=h$.
\end{proof}

\begin{Remark}
One can check that the subalgebra of $\C[x,y]$ generated by $x^2+y^2$, $2xy$, and $x^2-y^2$ is isomorphic to $\C[c_1,c_2,q]/\big(q^2-((c_2)^2-(c_1)^2)\big)$ via $2xy\mapsto c_1$, $x^2+y^2\mapsto c_2$, $x^2-y^2\mapsto q$.
\end{Remark}

\section{Finite-dimensional irreducible representations of the diagonal reduction algebra of \texorpdfstring{$\Fosp(1|2)$}{osp(1|2)}}\label{sec:FDreps}

\subsection{Shapovalov form}
In this section, let $\Theta$ denote the (non-super) algebra anti-automorphism of $A$ given by $\Theta( x_{\pm\alpha})= x_{\mp\alpha}$, $\Theta( x_{\pm 2\alpha})=-x_{\mp 2\alpha}$, $\Theta(h)=h$.

For $\la=\la(H)\in R$, let $M(\la)$ be the Verma module from Section \ref{ssec:Verma} and $\Theta\big(M(\la)\big)=({}_\la R)\otimes_{B_-}A$ be the ``opposite'' Verma module, where $B_-$ is the $R$-subring of $A$ generated by $\big\{h, x_{-\al}, x_{-2\al}\big\}$, and ${}_\la R$ is the $(R,B_-)$-bimodule $R$ with right action $1.x_{-\al}=1.x_{-2\al}=0$, $1.h = \la(H)1$. 
By the PBW Theorem for $A$ (Theorem \ref{thm:PBW}), 
\begin{equation}
\Theta\big(M(\la)\big)\otimes_A M(\la)= \left(({}_\la R)\otimes_{B_-} A\right) \otimes_{A} \left(A \otimes_{B} R_\la\right) = ({}_\la R)\otimes_{B_-} A \otimes_{B} R_\la \cong R.
\end{equation}

The \emph{Shapovalov form} on $M(\la)$ is the $\C$-bilinear and $R$-valued form given by the composition of maps
\begin{equation}\label{ShapovalovFormComposition}
\langle\cdot,\cdot\rangle_\la:M(\la)\otimes_\C M(\la) \overset{\Theta\otimes\Id}{\longrightarrow} \Theta\big(M(\la)\big)\otimes_\C M(\la)\longrightarrow 
\Theta\big(M(\la)\big)\otimes_A M(\la)\cong R.
\end{equation}

See \cite[Sections 3.1.4 and 3.1.5]{humphreysRepresentationsSemisimpleLie2008} for a universal construction of the Shapovalov form on the universal enveloping algebra of a semisimple Lie algebra and for a general discussion related to the the validity of the description given below:

\begin{Lemma} \label{lem:shapovalov}
The Shapovalov form satisfies the following properties, for all $u_1,u_2\in M(\la)$ and $f(H)\in R$:
\begin{enumerate}[{\rm (i)}]
\item \label{lem:shapovalovi} $\langle v_\la,v_\la\rangle = 1$;
\item \label{lem:shapovalovii} $\langle f(H).u_1,u_2\rangle= \langle  u_1,f(H).u_2\rangle$;
\item \label{lem:shapovaloviii} $\langle u_1.f(H),u_2\rangle = \langle u_1,u_2\rangle f(H) = \langle u_1,u_2.f(H)\rangle$;
\item \label{lem:shapovaloviv} $\langle x.u_1,u_2\rangle = 
\langle u_1,\Theta(x).u_2\rangle$ for all $x\in A$;
\item \label{lem:shapovalovv} $\langle u_1,u_2\rangle =\langle u_2,u_1\rangle$;
\item \label{lem:shapovalovvi} 
$\langle Rv_{\lambda,p,q},\, Rv_{\lambda,p',q'}\rangle=0$ when $p,p'\in \Z_{\geq 0}, q,q' \in \{0,1\}$, $(p,q)\neq (p',q')$.

\end{enumerate}
\end{Lemma}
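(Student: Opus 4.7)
The plan is to unwind the definition \eqref{ShapovalovFormComposition} in terms of the bimodule structures. First, define the $\C$-linear map $\Theta : M(\la) \to \Theta(M(\la))$ on decomposable tensors by $a \otimes_B r \mapsto r \otimes_{B_-} \Theta(a)$; this is well-defined because $\Theta(B) = B_-$ and $\Theta$ fixes $R[h]$ pointwise, and it satisfies the twist identity $\Theta(x.u) = \Theta(u).\Theta(x)$ for all $x \in A$ and $u \in M(\la)$. From here, properties (i)--(iv) are formal. For (i), $\Theta(v_\la) \otimes_A v_\la = 1_R \otimes_{B_-} 1_A \otimes_B 1_R$ maps to $1 \in R$ under the canonical isomorphism. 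For (iv), the twist identity yields $\langle x.u_1, u_2\rangle = (\Theta(u_1).\Theta(x)) \otimes_A u_2 = \Theta(u_1) \otimes_A (\Theta(x).u_2) = \langle u_1, \Theta(x).u_2\rangle$, the middle equality being the standard tensor relation over $A$. Property (ii) is the special case $x = f(H)$ of (iv), using $\Theta(f(H)) = f(H)$, while (iii) is $R$-bilinearity coming from the bimodule structures on both $M(\la)$ and $\Theta(M(\la))$.

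For (vi), I plan to use $H$-weight grading. Each $v_{\la, p, q}$ is an $H$-weight vector whose weight is determined by $2p + q$ (via the commutation relations $x_{k\al}f(H) = f(H+k)x_{k\al}$ of \eqref{eq:xf}), and both $M(\la)$ and $\Theta(M(\la))$ split as direct sums of $H$-weight spaces. The canonical isomorphism $({}_\la R) \otimes_{B_-} A \otimes_B R_\la \cong R$ annihilates elements of nonzero $H$-weight, so the pairing vanishes on pairs of weight vectors with differing weights. Since $(p,q) \mapsto 2p+q$ is injective for $p \in \Z_{\geq 0}$ and $q \in \{0,1\}$, this gives the desired orthogonality.

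The main obstacle is (v). Writing $u_i = y_i . v_\la$ with $y_i \in A_-$, property (iv) reduces the claim to the identity $\langle v_\la, \Theta(y_1) y_2 . v_\la\rangle = \langle v_\la, \Theta(y_2) y_1 . v_\la\rangle$. By (vi), both sides vanish unless $y_1$ and $y_2$ share an $H$-weight, in which case $\Theta(y_1) y_2$ and $\Theta(y_2) y_1$ both lie in the centralizer $A^H$. For any $z \in A^H$, combining the PBW decomposition with \eqref{eq:HcentralizerDecomp} and noting that $J$ annihilates $v_\la$ yields $\langle v_\la, z.v_\la\rangle = \varphi(z)|_{h = \la}$. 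The desired symmetry thus reduces to $\varphi(\Theta(y_1)y_2) = \varphi(\Theta(y_2)y_1)$. Using $\Theta^2 = \Id$, this further reduces to the single identity $\varphi \circ \Theta = \varphi$ on $A^H$, since $\varphi(\Theta(y_1)y_2) = \varphi(\Theta(\Theta(y_2)y_1)) = \varphi(\Theta(y_2)y_1)$. The identity $\varphi \circ \Theta = \varphi$ follows because $\Theta$ restricts to an algebra anti-automorphism of $A^H$ (it negates $H$-weights and hence preserves the weight-zero part), $\varphi$ is a $\C$-algebra homomorphism to the commutative ring $R[h]$, and $\Theta$ acts as the identity on $R[h]$. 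The key technical point will be verifying that $\Theta$ preserves $J$ so that it descends compatibly through $\varphi$; this follows from the two characterizations $J = (Ax_\al+Ax_{2\al}) \cap A^H = (x_{-\al}A + x_{-2\al}A) \cap A^H$ noted in the discussion preceding \eqref{eq:HcentralizerDecomp}, together with the fact that $\Theta$ swaps these left and right ideals.
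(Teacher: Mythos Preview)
Your proof is correct, and in fact it supplies substantially more than the paper does: the paper does not prove Lemma~\ref{lem:shapovalov} at all. It merely points to \cite[Sections 3.1.4 and 3.1.5]{humphreysRepresentationsSemisimpleLie2008} for the classical Shapovalov construction and remarks afterward that the form is ``contravariant \cite{khoroshkinContravariantFormReduction2018} and symmetric,'' treating the six properties as standard. So there is no ``paper's own proof'' to compare against beyond an appeal to the literature.

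Your argument for (i)--(iv) and (vi) is the expected unwinding of the definition; the $H$-weight argument for (vi) (combining (ii), (iii), and injectivity of $(p,q)\mapsto 2p+q$) is exactly right. Your treatment of (v) via the Harish-Chandra projection is a nice touch that goes beyond the usual textbook proof: reducing symmetry to $\varphi\circ\Theta=\varphi$ on $A^H$ is clean, and your justification that $\Theta(J)=J$ via the two descriptions $J=(Ax_\al+Ax_{2\al})\cap A^H=(x_{-\al}A+x_{-2\al}A)\cap A^H$ is the correct way to close the loop. One cosmetic point: when you write ``$u_i=y_i.v_\la$ with $y_i\in A_-$'' it is simpler (and sufficient) to take $y_i\in A$; the weight-orthogonality step (vi) then handles the reduction to equal-weight $y_i$, and you avoid having to first absorb right $R$-scalars into the $y_i$. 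Also, your sentence ``$\varphi$ is a $\C$-algebra homomorphism to the commutative ring $R[h]$'' is not really doing work in the argument---what matters is $\Theta|_{R[h]}=\Id$ together with $\Theta(J)\subset J$, which you do verify. None of this is a gap; the proof stands.
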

Lemma \ref{lem:shapovalov} means $\langle \cdot, \cdot \rangle_{\la}$ is contravariant \cite{khoroshkinContravariantFormReduction2018} and symmetric, in addition, the form yields orthogonal weight spaces of $M(\la)$---as much is suggested by the Shapovalov nomenclature. 
The definition of the radical of the Shapovalov form \eqref{ShapovalovFormComposition} as
\begin{equation}
\op{rad}\langle\cdot,\cdot\rangle_\la = \{v\in M(\la)\mid \langle v,u\rangle=0=\langle u,v\rangle, \forall u\in M(\la)\}
\end{equation}
only differs from the usual case \cite{humphreysRepresentationsSemisimpleLie2008} in that $\langle v, u \rangle = 0 = \langle u, v \rangle$ is viewed an element of $R$. Because \eqref{ShapovalovFormComposition} takes values in the PID $R$, which is not a field, some care is needed in proving the (non-)degeneracy of $\langle \cdot, \cdot \rangle_{\la}$ and in determining the maximal submodules of $M(\la)$. 
\begin{Lemma} \label{lem:shapovalov-more}
Recall that $\hat\la=(H-1)\la$ for $\la\in R$. The Shapovalov form satisfies:
\begin{enumerate}[{\rm (i)}]
\item $\langle x_{-\al}^m.v_\la, x_{-\al}^n.v_\la\rangle = \delta_{m,n}F_1(H,\hat\la)F_2(H+1,\hat \la)\cdots F_n(H+n-1,\hat\la)$.
\item When $\la=0$, $\langle x_{-2\al}^m.v_0, x_{-2\al}^n.v_0\rangle=\delta_{m,n}G_1(H)G_2(H+2)\cdots G_n(H+2n-2)$ where $G_i(H)\in R^\times$ (invertible elements) for all $i>0$.
\item $\langle\cdot,\cdot\rangle_\la$ is nondegenerate if and only if $\lambda=0$ or $F_n(H+n-1,\hat\la)\neq 0$ for all positive integers $n$.
\item If $\langle\cdot,\cdot\rangle_\la$ is degenerate, then
\begin{equation}
\op{rad}\langle\cdot,\cdot\rangle_\la = \sum_{m\ge n} Rx_{-\al}^mv_\la,
\end{equation}
where $n$ is the smallest positive integer such that $F_n(H+n-1,\hat\lambda)=0$. This $n$ is necessarily odd.
\item For every $\mu\in\C\setminus\Z$, define the space $N(\la,\mu) \coloneqq M(\la).(H-1-\mu)+\op{rad}\langle\cdot,\cdot\rangle_\la$, the enclosed $(H-1-\mu)$ being the principal ideal of $R$ generated by $H-1-\mu$. Then we have
\begin{equation}\label{eq:Nlambdamuset}
N(\la,\mu) =\{v\in M(\la)\mid \langle v,u\rangle\in (H-1-\mu)\,\forall u\in M(\la)\}.
\end{equation}
Furthermore, $N(\la,\mu)$ is a maximal $(A,R)$-subbimodule of $M(\la)$, and every maximal $(A,R)$-subbimodule of $M(\la)$ is equal to $N(\la,\mu)$ for some $\mu\in\C\setminus\Z$.
\end{enumerate}
\end{Lemma}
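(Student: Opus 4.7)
My plan is to treat each part using contravariance, symmetry, and bilinearity of the Shapovalov form (Lemma \ref{lem:shapovalov}) together with the reduction formulas of Lemma \ref{lem:tech}, and then bootstrap to the structural results in (iii)--(v).

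For (i), I would apply Lemma \ref{lem:shapovalov}(iv) iteratively to rewrite $\langle x_{-\al}^m.v_\la,\, x_{-\al}^n.v_\la\rangle$ as $\langle v_\la,\, x_\al^m x_{-\al}^n.v_\la\rangle$; the case $m\neq n$ vanishes by weight-space orthogonality (Lemma \ref{lem:shapovalov}(vi)). For $m = n$, iterating Lemma \ref{lem:tech}(ii) on $v_\la$ produces the telescoping product, using that $(Ax_\al + Ax_{2\al}).v_\la = 0$ by the highest-weight condition, that $\hat h.v_\la = v_\la\cdot\hat\la$ by centrality of $\hat h$, and the $H$-shift relation \eqref{eq:xf}. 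Part (ii) follows the same template via Lemma \ref{lem:tech}(i), with the simplification that $\hat\la = 0$ kills the $A\hat h$ contribution when evaluated at $v_0$; each factor $-kH^2(H-k+1)$ has only integer roots and so is a unit in $R$.

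For (iii) I would work in the PBW basis of Theorem \ref{thm:PBW}, in which weight-space orthogonality yields a diagonal Gram matrix. When $\la\in R^\times$, relation \eqref{eq:-alphaD-alpha} expresses $x_{-2\al}.v_\la$ as an $R^\times$-multiple of $x_{-\al}^2.v_\la$, reducing each diagonal entry to a product of the $F_k(H+k-1,\hat\la)$'s from (i); nondegeneracy over the domain $R$ is then equivalent to each such product being nonzero. When $\la = 0$, \eqref{eq:-alphaD-alpha} yields $x_{-\al}^2.v_0 = 0$, so the two $x_{-2\al}$-towers based at $v_0$ and $x_{-\al}.v_0$ decouple; by (ii), each diagonal entry is a product of units, so nondegeneracy holds. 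For (iv), (iii) forces $\la\neq 0$ and (i) identifies $\op{rad}\langle\cdot,\cdot\rangle_\la$ with the tail $\bigoplus_{m\ge n}R x_{-\al}^m.v_\la$ where $n$ is the smallest positive integer with $F_n(H+n-1,\hat\la)=0$. To see $n$ is odd, inspect \eqref{eq:Fn-formula}: for even $n$, $F_n(H+n-1,\hat\la)$ carries an overall factor $\hat h^2$, so its vanishing would force $\hat\la = 0$, a contradiction.

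Part (v) is the main obstacle and will demand more care. That $N(\la,\mu)$ is an $(A,R)$-subbimodule is immediate: the radical is $A$-invariant by contravariance, and $M(\la).(H-1-\mu)$ is $A$-invariant because left $A$-action commutes with right $R$-multiplication. For the set characterization \eqref{eq:Nlambdamuset}, I would diagonalize the form in the PBW basis, denote the diagonal entries by $d_i$, and compare the condition $c_id_i\in(H-1-\mu)$ component-wise with the explicit decomposition of $N(\la,\mu)$; maximality of the ideal $(H-1-\mu)\subset R$ (which holds because $\mu\notin\Z$ means $H-1-\mu$ is not inverted) makes this ring-theoretic comparison tractable. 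Maximality of $N(\la,\mu)$ then follows because the quotient $M(\la)/N(\la,\mu)$ inherits a form valued in $R/(H-1-\mu)\cong\C$ that is nondegenerate by construction, forcing this quotient to be a simple $A$-module (its residual $R$-action factoring through $\C$). For the converse, any maximal $(A,R)$-subbimodule $M'$ yields a simple quotient on which $R$ acts through some maximal ideal of $R$, necessarily of the form $(H-1-\mu)$ with $\mu\in\C\setminus\Z$; a direct comparison together with the maximality of $M'$ then forces $M' = N(\la,\mu)$.
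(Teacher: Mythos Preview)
Your treatment of (i)--(iv) is essentially the paper's own argument.

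Part (v) diverges from the paper, and your approach has a real gap. You claim the form induced on $M(\la)/N(\la,\mu)$ is ``nondegenerate by construction,'' but this can fail. If some diagonal Gram entry $d_j$ is nonzero in $R$ (so the basis vector $e_j\notin\op{rad}\langle\cdot,\cdot\rangle_\la$) yet lies in the maximal ideal $(H-1-\mu)$, then $\bar e_j$ is a nonzero vector in the quotient on which the induced $\C$-valued form vanishes identically. This situation does occur: for $\la\in\C$ with $\la^2=(\mu+k)^2$ and $k$ odd, the factor $F_k(H+k-1,\hat\la)$ is a nonzero element of $R$ that vanishes at $H=\mu+1$. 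The same obstruction blocks your component-wise verification of \eqref{eq:Nlambdamuset}: from $c_jd_j\in(H-1-\mu)$ with $d_j\in(H-1-\mu)\setminus\{0\}$ you cannot conclude $c_j\in(H-1-\mu)$, so primality of $(H-1-\mu)$ alone does not make the comparison ``tractable.'' The paper sidesteps this by reversing the logical order: only the easy containment $N(\la,\mu)\subset\{v:\langle v,u\rangle\in(H-1-\mu)\ \forall u\}$ is checked directly; since the right-hand set is proper (it omits $v_\la$), equality is then \emph{deduced from} maximality of $N(\la,\mu)$ rather than used to prove it. Maximality and the classification of maximal subbimodules are argued together via a coprimality-plus-containment strategy: distinct $N(\la,\mu)$ and $N(\la,\mu')$ sum to $M(\la)$, so it suffices that every maximal $(A,R)$-subbimodule $N$ lie inside some $N(\la,\mu)$; the paper obtains this by splitting $N$ according to whether its highest-weight component is zero (that piece lands in $\op{rad}$) or equals $v_\la.J$ for a nonzero ideal $J\subset R$ (that piece is placed in $M(\la).(H-1-\mu)$ after choosing $\mu$ with $J\subset(H-1-\mu)$ via the weak Nullstellensatz).
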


\begin{proof}
(i) By Lemma \ref{lem:shapovalov}(v), without loss of generality we may assume $n\le m$. We have

\begin{align*}
\langle x_{-\al}^m.v_\la, x_{-\al}^n.v_\la\rangle &= \langle x_{-\al}^{m-n}.v_\la,  x_\al^n x_{-\al}^n.v_\la\rangle\\
&\hspace{-0.35em}\overset{\eqref{eq:Fn-congruencebar}}{=}\langle x_{-\al}^{m-n}.v_\la, x_\al^{n-1} F_n(H,\hat h) x_{-\al}^{n-1}.v_\la\rangle\\
&=\langle x_{-\al}^{m-n}.v_\la, x_\al^{n-1}  x_{-\al}^{n-1}F_n(H+n-1,\hat h).v_\la\rangle =\cdots\\
&=\langle x_{-\al}^{m-n}.v_\la, F_1(H,\hat h)F_2(H+1,\hat h)\cdots F_n(H+n-1,\hat h).v_\la\rangle\\
&=\langle x_{-\al}^{m-n}.v_\la, v_\la .F_1(H,\hat \la)F_2(H+1,\hat \la)\cdots F_n(H+n-1,\hat \la)\rangle\\
&=\langle x_{-\al}^{m-n}.v_\la,v_\la\rangle F_1(H,\hat \la)F_2(H+1,\hat \la)\cdots F_n(H+n-1,\hat \la)\\
&=\delta_{m,n} F_1(H,\hat \la)F_2(H+1,\hat \la)\cdots F_n(H+n-1,\hat \la).
\end{align*}

(ii) Proved as in part (i) using Lemma \ref{lem:tech}(i).

(iii) When $\lambda = 0$, the Shapovalov form is nondegenerate by part (ii). 
For $\lambda \neq 0$, \eqref{eq:Mlambdanot0-basis} says $M(\lambda) = \underset{k \in \Z_{\geq 0}}{\bigoplus} R
x_{-\alpha}^{k}v_{\lambda}$, so (i) implies (iii).

(iv) If $\langle\cdot,\cdot\rangle_\la$ is degenerate, by part (iii) we have $\la\neq 0$ and $F_n(H+n-1,\hat\la)=0$ for some positive integer $n$, chosen minimal. 
By Lemma \ref{lem:tech}(ii), $F_{2k}(H+2k-1,\hat\la)$ is a product of nonzero rational functions hence is always nonzero. Therefore, $n$ has to be odd, and the claim follows from part (i).

(v) 
The equality \eqref{eq:Nlambdamuset} between these two sets is shown as follows. 
($\subset$): By Lemma \ref{lem:shapovalov}(iii),  
$\langle v.(H-1-\mu),\,u\rangle_\la = \langle v,\,u\rangle_\la (H-1-\mu)\in (H-1-\mu)$ for any $v,u\in M(\la)$. If $v\in\op{rad}\langle \cdot, \cdot \rangle_{\lambda}$ then $\langle v,u\rangle_\la = 0 \in (H-1-\mu)$ for all $u\in M(\la)$.

($\supset$): The right hand side does not contain $v_\la$, 
by Lemma \ref{lem:shapovalov}(i), since $1$ does not belong to the principal ideal $(H-1-\mu)$ of $R$. Thus the right hand side is a proper $(A,R)$-subbimodule of $M(\la)$.
Hence, equality holds once we show that $N(\la,\mu)$ is a maximal $(A,R)$-subbimodule of $M(\la)$.

It is true that $N(\la,\mu)+N(\la,\mu')=M(\la)$ for any distinct $\mu,\mu'\in\C\setminus\Z$ because the maximal ideals $(H-1-\mu)$ and $(H-1-\mu')$ of $R$ are coprime. Therefore it suffices to show that any maximal $(A,R)$-subbimodule is contained in (hence equal to) $N(\la,\mu)$ for some $\mu\in\C\setminus\Z$, as this automatically shows that each $N(\la,\mu)$ is maximal. (If some $N(\la,\mu)$ were not maximal, then it is contained in some maximal $(A,R)$-subbimodule hence contained in some $N(\la,\mu')$. This forces $\mu'=\mu$ since otherwise $M(\la)=N(\la,\mu)+N(\la,\mu')\subset N(\la,\mu')$ contradicting $N(\la,\mu')$ is proper.)

By Lemma \ref{lem:shapovalov}, the radical is an $(A,R)$-subbimodule of $M(\la)$. Let $N$ be any $(A,R)$-subbimodule of $M(\la)$. Since $M(\la)$ is a weight module with respect to $\op{ad}H$, so is $N$. Since $M(\la)_0=R v_\la=v_\la.R$, the $\la$-weight space of $N$ equals $N_\la=v_\la.J$ for some (possibly zero) proper ideal $J$ of $R$. 

Now let $N$ be a maximal $(A,R)$-subbimodule of $M(\la)$.
Let $S$ be the sum of all $(A,R)$-subbimodules of $N$ such that $S_\la=0$. Then $\langle v_\la, S\rangle_\la =0$ hence $S$ is contained in the radical of $\langle\cdot,\cdot\rangle_\la$ therefore $S$ is contained in $N(\la,\mu)$ for any $\mu$.

Let $T$ be the sum of all $(A,R)$-subbimodules of $N$ whose intersection with $v_\la.R$ is nonzero. 
As discussed above, $T_\la = v_\la.J$ for some nonzero proper ideal $J$ of $R$. We have $J\subset (H-1-\mu)$ for some $\mu\in\C\setminus\Z$, by the weak Nullstellensatz. Now $T\subset M(\la).(H-1-\mu) \subset N(\lambda, \mu)$. So $N=S+T\subset N(\lambda,\mu)$. 
\end{proof}

\subsection{Irreducible highest weight representations}

We put
\[L(\la,\mu) = M(\la)/N(\la,\mu),\qquad \la\in\C, \mu\in\C\setminus\Z.\]

Notice that we now only consider the case when $\la\in\C\subset R$ is a constant function of $H$. Since we specialize $H$ anyway to $\mu+1$, we do not need greater generality to describe finite-dimensional irreducible representations of $A$.

The following two results prove Theorem \ref{thm:MainTheorem2} with added content on infinite-dimensional irreducible $A$-representations.  
\begin{Lemma}\label{lem:action-of-ghost-center}
The action of the ghost center on $L(\la,\mu)$ is given by
\begin{equation}
C^{(1)} \mapsto 2\la\mu,\quad C^{(2)}\mapsto \mu^2+\lambda^2,\quad Q^{(2)}\mapsto (\mu^2-\lambda^2)(-1)^{|\cdot|},
\end{equation}
where $(-1)^{|\cdot|}\in\End_\C\big(L(\la,\mu)\big)$ denotes the parity sign function defined on homogeneous vectors by $v\mapsto (-1)^{|v|}$. 
\end{Lemma}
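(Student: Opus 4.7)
The plan is to reduce the computation on $L(\la,\mu)$ to the universal formula for the ghost center action on a Verma module, namely Lemma \ref{lem:AmoduleAction}(ii), and then specialize using the explicit Harish--Chandra values given in Lemma \ref{lem:phi-of-C1C2Q2}. Since $L(\la,\mu)=M(\la)/N(\la,\mu)$ and $N(\la,\mu)$ is an $(A,R)$-subbimodule, both the left $A$-action and the right $R$-action descend to the quotient. Moreover, by construction $M(\la).(H-1-\mu)\subset N(\la,\mu)$, so the right-action of $R$ on $L(\la,\mu)$ factors through the quotient $R/(H-1-\mu)\cong\C$, under which $H-1\mapsto\mu$. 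Since $\la\in\C$ is chosen to be a constant (independent of $H$), the element $\hat\la=(H-1)\la$ specializes to $\mu\la$ on $L(\la,\mu)$.

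First, I would apply Lemma \ref{lem:AmoduleAction}(ii) in $M(\la)$ to each generator $z\in\{C^{(1)},C^{(2)},Q^{(2)}\}$ of the ghost center, obtaining
\[
z.u = (-1)^{\{z\}|u|}\, u . z_0(H,\hat\la),
\qquad z_0(H,\hat h)=\varphi(z),
\]
for every homogeneous $u\in M(\la)$. Then I would pass to the quotient $L(\la,\mu)$, where $H$ acts on the right as $\mu+1$, so that $z_0(H,\hat\la)$ becomes a scalar in $\C$. The sign $(-1)^{\{z\}|u|}$ is $+1$ for the central elements $C^{(1)}$ and $C^{(2)}$ (ghost parity $\bar 0$) and $(-1)^{|u|}$ for the anti-central element $Q^{(2)}$ (ghost parity $\bar 1$).

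Finally, I would substitute the explicit Harish--Chandra values from Lemma \ref{lem:phi-of-C1C2Q2}. For $C^{(1)}$, writing $\varphi(C^{(1)})=2(H-1)h=2\hat h$ and substituting $\hat h\mapsto\hat\la=(H-1)\la$ gives $z_0(H,\hat\la)=2(H-1)\la$, which specializes to $2\mu\la$ on $L(\la,\mu)$. For $C^{(2)}$, writing $\varphi(C^{(2)})=(H-1)^2+h^2=(H-1)^2+\hat h^2(H-1)^{-2}$ and substituting yields $(H-1)^2+\la^2$, which specializes to $\mu^2+\la^2$. The same calculation for $Q^{(2)}=\varphi^{-1}((H-1)^2-h^2)$ yields $\mu^2-\la^2$, carrying the extra factor $(-1)^{|u|}$ due to its anti-central ghost parity. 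Collecting these three computations gives the stated formulas.

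There is no serious obstacle; the only subtlety is conceptual bookkeeping: one must be clear that $\hat h$ is an element of the ring $R[\hat h]=R[h]$ in which $\varphi(z)$ lives, that the substitution $\hat h\mapsto\hat\la$ in Lemma \ref{lem:AmoduleAction}(ii) arises from the right $R$-action through $R_\la$, and that the further specialization $H-1\mapsto\mu$ arises from killing the ideal $M(\la).(H-1-\mu)\subseteq N(\la,\mu)$ in the quotient $L(\la,\mu)$.
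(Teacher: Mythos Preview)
Your proof is correct and follows exactly the approach the paper uses: pass from $M(\la)$ to the quotient $L(\la,\mu)$ via Lemma~\ref{lem:AmoduleAction}(ii) and then read off the scalars from Lemma~\ref{lem:phi-of-C1C2Q2}. The paper states this in a single sentence; you have simply unpacked the specialization $H-1\mapsto\mu$ and the ghost-parity bookkeeping in full detail, which is helpful but not different in substance.
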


\begin{proof}
Since $L(\lambda, \mu)$ is a quotient of $M(\lambda, \mu)$, we may extend part (ii) of Lemma \ref{lem:AmoduleAction} and use Lemma \ref{lem:phi-of-C1C2Q2}.

\end{proof}

\begin{Theorem} \label{thm:L-lambda-mu}
The following statements hold:
\begin{enumerate}[{\rm (i)}]
\item $L(\la,\mu)$ is an irreducible representation of $A$, for any $(\la,\mu)\in\C\times(\C\setminus\Z)$.
\item $L(\la,\mu)$ is finite-dimensional if and only if
\begin{equation}\label{eq:la-mu-dim}
\la^2=(\mu+n)^2
\end{equation}
for some odd positive integer $n$. In this case, $n=\dim L(\la,\mu)$ and $\la \neq 0$.
\item Every finite-dimensional irreducible representation $V$ of $A$ is odd-dimensional and isomorphic to $L(\la,\mu)$ for a unique pair $(\la,\mu)\in\C\times(\C\setminus\Z)$ satisfying \eqref{eq:la-mu-dim}, where $n=\dim V$.
\end{enumerate}
\end{Theorem}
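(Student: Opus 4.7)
The plan is to prove the three parts in order, leveraging the weight-space structure of $M(\la)$ together with the properties of $N(\la,\mu)$ from Lemmas~\ref{lem:shapovalov} and~\ref{lem:shapovalov-more}.

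For part~(i), the key step is to show that every $A$-submodule of $M(\la)$ is automatically an $(A,R)$-subbimodule. The $H$-weight decomposition writes $M(\la)$ as a direct sum of the rank-one free right $R$-modules $M(\la)_m=R\cdot x_{-2\al}^p x_{-\al}^q v_\la$ (by PBW), and on each weight space the left action of $f(H)\in R$ coincides with right multiplication by $f(H+m)$. Since $H\mapsto H+m$ is an automorphism of $R$, closure under left $R$-action forces right $R$-bistability. Pulling back any $A$-submodule of $L(\la,\mu)$ to $M(\la)$ then yields an $(A,R)$-subbimodule containing $N(\la,\mu)$, which by Lemma~\ref{lem:shapovalov-more}(v) must equal $N(\la,\mu)$ or $M(\la)$.

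For part~(ii), I would split on $\la=0$ versus $\la\in\C^\times$. When $\la=0$, Lemma~\ref{lem:shapovalov-more}(iii) gives nondegeneracy of $\langle\cdot,\cdot\rangle_0$, hence $N(0,\mu)=M(0).(H-1-\mu)$ and $L(0,\mu)\cong M(0)\otimes_R R/(H-1-\mu)$ inherits the infinite PBW basis. When $\la\in\C^\times$, the simplified basis $\{x_{-\al}^k v_\la\}_{k\ge 0}$ from~\eqref{eq:Mlambdanot0-basis}, combined with the weight-space orthogonality of the Shapovalov form and the characterization~\eqref{eq:Nlambdamuset}, reduces $\dim L(\la,\mu)$ to the count of $k\ge 0$ such that $F_1(H,\hat\la)\cdots F_k(H+k-1,\hat\la)\notin(H-1-\mu)$. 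Since $(H-1-\mu)$ is prime in $R$, this in turn identifies $\dim L(\la,\mu)$ as the smallest $j\ge 1$ for which $F_j(H+j-1,\hat\la)$ vanishes at $H=\mu+1$. Using Lemma~\ref{lem:tech} and $\hat\la|_{H=\mu+1}=\mu\la$, I compute each such value up to a unit in $\C$ to be $j(2\mu+j)\la^2$ for even $j$ and $(\mu+j)^2-\la^2$ for odd $j$. For $\la\neq 0$ and $\mu\notin\Z$, the even case never vanishes while the odd case vanishes precisely when $\la^2=(\mu+j)^2$. Hence $L(\la,\mu)$ is finite-dimensional iff some positive odd $n$ satisfies $\la^2=(\mu+n)^2$, in which case the minimal such $n$ equals $\dim L(\la,\mu)$; moreover $\la=0$ in this relation would force $\mu=-n\in\Z$, so $\la\neq 0$.

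For part~(iii), given a finite-dimensional irreducible $V$, the plan is to produce a primitive vector and identify $V$ with $L(\la,\mu)$. Because every $(H-n)^{-1}\in R\subset A$ acts on $V$, all $H$-eigenvalues lie in $\C\setminus\Z$. From $[H,x_\al]=-x_\al$ and $[H,x_{2\al}]=-2x_{2\al}$, the $H$-eigenspace of minimal real part is killed by $x_\al$ and $x_{2\al}$; within it (preserved by $h$), pick a joint $(H,h)$-eigenvector $v$ with eigenvalues $(\mu+1,\la)\in(\C\setminus\Z)\times\C$. The assignment $v_\la\mapsto v$ extends to an $A$-linear surjection $M(\la)\twoheadrightarrow V$ whose kernel, by part~(i), is an $(A,R)$-subbimodule that is maximal by simplicity of $V$ and contains $M(\la).(H-1-\mu)$. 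Lemma~\ref{lem:shapovalov-more}(v) identifies the kernel as $N(\la,\mu')$ for some $\mu'\in\C\setminus\Z$; pairing $v_\la.(H-1-\mu)\in N(\la,\mu')$ against $v_\la$ via~\eqref{eq:Nlambdamuset} and Lemma~\ref{lem:shapovalov}(iii) yields $H-1-\mu\in(H-1-\mu')$, forcing $\mu=\mu'$, whence $V\cong L(\la,\mu)$. Uniqueness of $(\la,\mu)$ follows since the primitive line of $L(\la,\mu)$ is exactly $\C v_\la$: the identity $x_\al x_{-\al}^k v_\la=F_k(\mu+k,\mu\la)\,x_{-\al}^{k-1}v_\la$ has nonzero coefficient for $0<k<n$ by the minimality of $n$. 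The main obstacle is the weight-by-weight dimension analysis in~(ii), which requires the explicit formulas of Lemma~\ref{lem:tech} together with the primeness of $(H-1-\mu)\subset R$ to translate the vanishing of $F_j$ at $H=\mu+1$ into the clean condition $\la^2=(\mu+n)^2$.
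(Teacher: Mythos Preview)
Your argument is essentially correct and close in spirit to the paper's, but there are a few points worth flagging, and one place where your route genuinely diverges.

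\textbf{Part (i).} Your opening claim that \emph{every} $A$-submodule of $M(\la)$ is an $(A,R)$-subbimodule is stronger than what you actually prove or need: the weight-space argument ``left $f(H)=$ right $f(H+m)$'' only yields right-$R$-stability for submodules that are already $H$-graded, and you never say why an arbitrary $A$-submodule of $M(\la)$ should be graded (indeed $H$ does not act semisimply on $M(\la)$, since $R$ acts freely). What saves you is that you immediately pass to $L(\la,\mu)$, where $H$ acts with distinct scalar eigenvalues $\mu+1+m$ on the weight pieces; hence $A$-submodules of $L(\la,\mu)$ \emph{are} graded, and their pullbacks to $M(\la)$ (which contain the graded $N(\la,\mu)$) are too. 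You should state this explicitly. The paper sidesteps the gradedness issue altogether with a one-line trick: for $W\supsetneq N(\la,\mu)$ and $w\in W$, one has $w.f(H)-f(\mu+1)w = w.\big(f(H)-f(\mu+1)\big)\in M(\la).(H-1-\mu)\subset N(\la,\mu)\subset W$, so $w.f(H)\in W$.

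\textbf{Part (ii).} Your computation via the Shapovalov form and the characterization \eqref{eq:Nlambdamuset} is equivalent to the paper's, which instead works directly in $L(\la,\mu)$: it takes the minimal $n$ with $x_{-\al}^n\bar v_\la=0$, applies $x_\al$, and reads off $F_n(\mu+n,\mu\la)=0$. Both routes reduce to the same evaluation of $F_j$ at $H=\mu+1$, and your values $j(2\mu+j)\la^2$ (even $j$) and $(\mu+j)^2-\la^2$ (odd $j$) are correct up to units.

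\textbf{Part (iii).} Existence is handled the same way in both. For \emph{uniqueness} of $(\la,\mu)$ you take a different and more elementary route than the paper: you observe that the primitive line in $L(\la,\mu)$ is exactly $\C\bar v_\la$ (since $F_k(\mu+k,\mu\la)\neq 0$ for $0<k<n$), so any isomorphism must match highest-weight data, forcing $\mu=\mu'$ and $\la=\la'$. The paper instead invokes the ghost center: it compares the scalars by which $C^{(1)},C^{(2)},Q^{(2)}$ act (Lemma~\ref{lem:action-of-ghost-center}) and solves the resulting system together with $\la^2=(\mu+n)^2$. Your argument is self-contained and avoids the ghost-center machinery; the paper's ties the classification back to Theorem~\ref{thm:MainTheorem1}.
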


\begin{proof}
(i) We need to show that $N(\la,\mu)$ is a maximal $A$-submodule of $M(\la)$. Let $W$ be an $A$-submodule of $M(\la)$ properly containing $N(\la,\mu)$. It suffices to show that $W$ is a right $R$-submodule of $M(\la)$, because then $W=M(\la)$ by Lemma \ref{lem:shapovalov-more}(v). Let $f(H)\in R$ and $w\in W$. Then $w.f(H) - w.f(\mu+1) \in N(\la,\mu)$ because $f(H)-f(\mu+1)$ is divisible by $H-1-\mu$. But $w.f(\mu+1)=f(\mu+1)w\in W$ since $f(\mu+1)\in\C$. So $w.f(H)\in N(\la,\mu)+W=W$.

(ii) Suppose $L(\la,\mu)$ is finite-dimensional. Denote the image of $v_\la\in M(\la)$ in $L(\la,\mu)$ by $\bar v_\la$. Suppose $\la=0$. Then $\hat \la=(H-1)\la=0$. Thus $\hat h.\bar v_\la = 0$. Therefore, by Lemma \ref{lem:tech}(i), $\hat x_{-2\al}^n. \bar v_\la$ must be nonzero for all $n>0$ (since $\mu\notin\Z$). This contradicts that $L(\la,\mu)$ is finite-dimensional.

Thus we may assume $\la\neq 0$.
Leveraging the assumption that $L(\la,\mu)$ is finite-dimensional, it follows from $\{x_{-\alpha}^{k}.\bar{v}_{\lambda}\}_{k \in \Z_{\geq 0}} $ being a set of $H$-weight vectors of distinct weights that $x_{-\al}^n \bar v_\la =0$ for some, minimally chosen, non-negative integer $n$. Since $\la\neq 0$, the relations in $A$ imply that $\{x_{-\al}^i\bar v_\la\}_{i=0}^{n-1}$ is a basis for $L(\la,\mu)$ hence $n=\dim L(\la,\mu)$. 
By Lemma \ref{lem:tech}(ii) we have
$F_n(H,\hat h) x_{-\al}^{n-1}. \bar v_\la=0$.
Thus $x_{-\al}^{n-1}\bar v_\la.F_n(H+n-1,\hat \la) =0$.
As in the proof of part (i), we can reduce $H$ on the right modulo the ideal $(H-1-\mu)$ of $R$. That is, remembering that $\hat\la=(H-1)\la$, we get $x_{-\al}^{n-1}\bar v_\la.F_n(\mu+n,\mu \la)=0$, or, rewritten, $F_n(\mu+n,\mu\la) x_{-\al}^{n-1}. \bar v_\la=0$. Since $n$ was minimal, this forces $F_n(\mu+n,\mu\la)=0$.
By Lemma \ref{lem:tech}(ii), $n$ must be odd and $(\mu+n)^2\mu^2-(\mu\la)^2=0$. Since $\mu\notin\Z$, this implies $(\mu+n)^2=\la^2$. 

Conversely, if $(\mu+n)^2=\la^2$ for some odd positive integer $n$, then $\la\neq 0$ and we may retrace the above steps to conclude that $x_{-\al}^n \bar v_\la =0$ and $n$ is minimal with this property, which implies that $L(\la,\mu)$ is $n$-dimensional.

(iii) Since $V$ is finite-dimensional, the relations
$Hx_{k\al}=x_{k\al}(H-k)$, for $k=1,2$, and $x_{2\al}x_\al \in R^\times x_\al x_{2\al}$ imply that there exists a nonzero vector $v\in V$ such that
\begin{equation}\label{eq:v-condition}
(H-1).v=\mu v,\qquad x_\al. v=0,\qquad x_{2\al}.v=0
\end{equation}
for some $\mu\in\C$. Since $H-n$ is invertible in $R$ for all $n\in\Z$, it must act invertibly on $V$, and therefore $\mu\notin\Z$.
Since $h x_{k\al} \in R^\times x_{k\al} h$ and $h$ commutes with $H$, it follows that $h$ preserves the subspace of all $v\in V$ satisfying \eqref{eq:v-condition}. Thus there exists $v_0\in V$ such that
\[
h.v_0=\lambda v_0,\qquad (H-1).v_0=\mu v_0,\qquad x_\al. v_0=0,\qquad x_{2\al}.v_0=0,
\]
for some $(\la,\mu)\in\C\times(\C\setminus\Z)$.

By the PBW Theorem (Theorem \ref{thm:PBW}), the set $\big\{x_{-2\al}^k x_{-\al}^j. v_0\mid k\ge 0, j\in\{0,1\}\big\}$ spans $Av_0$ over $\C$ which equals to $V$ (by irreducibility of $V$ and that $Av_0$ is a nonzero subrepresentation).
We can turn $V$ into a right $R$-module by defining $v.(H-1)=\mu v$ for all $v\in V$.
By the universal property of $M(\la)$ coming from its definition, there exists a unique surjective map of $(A,R)$-bimodules $M(\la)\to V$ sending $v_\la$ to $v_0$. The kernel is a maximal $(A,R)$-subbimodule of $M(\la)$. By Lemma \ref{lem:shapovalov-more}(iv), $V\cong M(\la)/N(\la,\mu)=L(\la,\mu)$ for some $\mu\in\C\setminus\Z$.

It remains to prove uniqueness of $(\la,\mu)$. Suppose $L(\la,\mu)\cong L(\la',\mu')$ where $\la,\la'\in\C$ and $\mu,\mu'\in\C\setminus\Z$ satisfy
\begin{equation}\label{eq:pf-irreps-1}
\la^2 = (\mu+n)^2,\qquad (\la')^2=(\mu'+n)^2
\end{equation}
where $n=\dim L(\la,\mu)=\dim L(\la',\mu')$. Since the representations are isomorphic, the actions of $C^{(1)}, C^{(2)}, Q^{(2)}$ must yield the same scalars. Thus, by Lemma \ref{lem:action-of-ghost-center},
\begin{align}
2\la\mu &=2\la'\mu', \label{eq:pf-irreps-2}\\
\la^2+\mu^2 &= (\la')^2+(\mu')^2, \\
\la^2-\mu^2 &= (\la')^2-(\mu')^2.\label{eq:pf-irreps-4}
\end{align}
Expanding the squares and subtracting the two equations in \eqref{eq:pf-irreps-1} from each other using \eqref{eq:pf-irreps-4} we obtain $2(\mu-\mu')n=0$. Since $n$ is an odd integer, $\mu=\mu'$. Since $\mu$ is not an integer hence nonzero, \eqref{eq:pf-irreps-2} gives $\la=\la'$.
\end{proof}

\section{Tensor products}\label{sec:TensorProducts}

Let $V(\xi_i)$ be two irreducible highest weight representations of $\Fosp(1|2)$ of highest weights $\xi_i\in\C$, $i\in\{1,2\}$, such that $\xi_1+\xi_2\notin\Z$. Let $v_i$ denote the highest weight vector of $V(\xi_i)$. To establish the following examples in their clarity we return to using bars. Recall from Section \ref{sec:DRAosp} that $\tilde h = h\otimes 1 - 1\otimes h$ and $H = h \otimes 1 + 1 \otimes h$. Now $(H-1) v_1\otimes v_2 = (\xi_1+\xi_2-1) v_1\otimes v_2$. So \[\mu=\xi_1+\xi_2-1.\]
Set $V = V(\xi_{1}) \otimes V(\xi_{2})$. The action of $\bar h$ on the space $V^+$ of primitive vectors in $V$ is given by $\bar h.w = (P\tilde h+I).w$, for $w \in V^{+}$; here, $P=\sum_{n=0}^\infty \varphi_n(H) X_{-\al}^n X_\al^n$ is the extremal projector for the image of $\Fosp(1|2)$ in $\Fosp(1|2)\times\Fosp(1|2)$ under the diagonal embedding $\delta$.
\begin{Proposition}[{\cite[Proposition 3.1]{hartwigDiagonalReductionAlgebra2022a}}]
\label{prp:tildestuff}
	The algebra $Z\left(\Fosp(1|2) \times \Fosp(1|2) , \Fosp(1|2)\right)$ is generated as a $D^{-1}U(\Fh)$-ring by the following elements:
	\begin{align*}
		P\tilde x_{2\al} +I &= \tilde x_{2\al} + I \\
		P\tilde x_\al +I&= \tilde x_\al -2 \varphi_1(H) X_{-\al} \tilde x_{2\al}+ I\\
		P\tilde{h}+I &= \tilde h + \varphi_1(H) X_{-\al} \tilde x_\al-2 \varphi_2(H) X_{-\al}^2 \tilde x_{2\al}+I\\
		P\tilde x_{-\al} +I&=\tilde x_{-\al} + \varphi_1(H)X_{-\al}\tilde h+\varphi_2(H)X_{-\al}^2\tilde x_\al-2\varphi_3(H)X_{-\al}^3\tilde x_{2\al}+I\\
		P\tilde x_{-2\al} +I&= \tilde x_{-2\al}+\varphi_1(H)X_{-\al}\tilde x_{-\al}+\varphi_2(H)X_{-\al}^2\tilde h+\varphi_3(H)X_{-\al}^3\tilde x_\al -2\varphi_4(H)X_{-\al}^4\tilde x_{2\al}+I
	\end{align*}
\end{Proposition}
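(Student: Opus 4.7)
The plan rests on the defining property of the extremal projector $P$ for the diagonal copy $\Fg\subset\FG$: in a suitable completion one has $\FN_+\cdot P\subseteq \check I$, so for every $u\in U$ the coset $Pu+I$ automatically lies in $N_U(I)/I=Z$. Thus the task splits into (a) computing $P\tilde x_\beta \mod I$ explicitly for each of the five $\beta\in\{\pm 2\al,\pm\al,0\}$ (with $\beta=0$ meaning $\tilde h$), and (b) verifying these five cosets generate $Z$ as a $D^{-1}U(\Fh)$-ring.

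For (a), I would expand $P=\sum_{n\ge 0}\varphi_n(H)X_{-\al}^n X_\al^n$, with $\varphi_n(H)$ the unique rational coefficients forcing the projector property, and push each $X_\al$ past $\tilde x_\beta$ using iterated super-commutators. Because cross-tensor brackets of the form $[a\otimes 1,\,1\otimes b]$ vanish in $U(\FG)$, each bracket $[X_\al,\tilde x_\beta]$ collapses to the $\tilde{}$\,-image of the $\Fosp(1|2)$ bracket $[x_\al,x_\beta]$, yielding the elementary rules
\[
[X_\al,\tilde x_{2\al}]=0,\quad [X_\al,\tilde x_\al]\propto\tilde x_{2\al},\quad [X_\al,\tilde h]\propto\tilde x_\al,\quad [X_\al,\tilde x_{-\al}]\propto\tilde h,\quad [X_\al,\tilde x_{-2\al}]\propto\tilde x_{-\al}.
\]
Modulo $I$ any monomial ending in $X_\al$ or $X_{2\al}\in\FN_+$ is discarded, so only finitely many $n$ contribute; the cases $\beta=2\al,\al,0,-\al,-2\al$ terminate at $n=0,1,2,3,4$ respectively, and the $H$-shifts picked up as $X_\al^k$ passes through intermediate Cartan elements explain why each surviving summand carries a single factor $\varphi_n(H)$ to the left of $X_{-\al}^n$ in the displayed formulas. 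Carrying out the $n$-by-$n$ expansion gives precisely the stated expressions.

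For (b), I would invoke the Mickelsson/Zhelobenko identification $Z\cong P(U/I)$ together with the PBW theorem for $U$ to write any element of $Z$ as an $R$-polynomial in $X_{-\al},X_{-2\al}$ times an image of $\Fp$ under $P$; since $X_{-\al},X_{-2\al}$ are already absorbed into the $P$-prefactors, the five elements $P\tilde x_\beta+I$ suffice to generate $Z$ over $R$. The main obstacle will be the fourfold iterated super-commutator computation for $P\tilde x_{-2\al}$: one must simultaneously track the $H$-shifts that flow through the $\varphi_n(H)$ factors, control the signs arising from odd generators passing each other, and verify that the intermediate terms telescope so that exactly the displayed sum survives without any spurious extra summands.
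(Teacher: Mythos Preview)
This proposition is not proved in the present paper at all: it is quoted verbatim from the authors' earlier work \cite[Proposition 3.1]{hartwigDiagonalReductionAlgebra2022a} and used here only as input for Theorem \ref{thm:MainTheorem3}. So there is no in-paper proof to compare your sketch against.

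That said, your outline matches the standard argument one finds in the cited paper. A couple of remarks on your part (b): the sentence ``since $X_{-\al},X_{-2\al}$ are already absorbed into the $P$-prefactors, the five elements $P\tilde x_\beta+I$ suffice'' is not how generation is actually argued. The clean route is to use the isomorphism $Z\cong A=(U/\textup{\Romanbar{II}},\bDiamond)$ from Section \ref{sec:DRAosp}: by PBW for $\FG$ relative to the decomposition $\FG=\Fg\oplus\Fp$, the double coset space $U/\textup{\Romanbar{II}}$ is generated over $R$ by the cosets $\tilde x_\beta+\textup{\Romanbar{II}}$ and $\tilde h+\textup{\Romanbar{II}}$, and under the isomorphism these correspond exactly to $P\tilde x_\beta+I$ and $P\tilde h+I$ in $Z$. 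No separate argument about $X_{-\al},X_{-2\al}$ is needed. Also, in (a) your reduction ``modulo $I$ any monomial ending in $X_\al$ or $X_{2\al}$ is discarded'' is correct, but be careful that $X_\al$ is odd, so the Leibniz-type expansion $X_\al^n\tilde x_\beta=\sum_k(\cdots)$ picks up super-signs; in practice it is simpler to compute $P\tilde x_\beta$ term by term in $n$ and stop once $(\mathrm{ad}\,X_\al)^n\tilde x_\beta=0$, exactly as you describe.
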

By this proposition, and that $\tilde x_{\al}.v_1\otimes v_2=0$ in the special situation where $v_i$ are the highest weight vectors of the respective factors, we have
\begin{equation}\label{eq:barh-onv1v2}
\bar h.v_1\otimes v_2 = (P\tilde h+I).v_{1}\otimes v_{2} = \tilde h.v_1\otimes v_2= (\xi_1-\xi_2) v_1\otimes v_2.
\end{equation}
So \[\lambda=\xi_1-\xi_2.\]

\begin{Example}[$\xi_1=1/2$, $\xi_2=-1$]
Consider $V(1/2)\cong\C[x]$, $V(-1)\cong \C^{1|2}$. Here $\la=\frac{1}{2}-(-1)=\frac{3}{2}$ and $\mu=\frac{1}{2}+(-1)-1=\frac{-3}{2}$. The action of (S)Casimirs of the diagonal reduction algebra on the space $V^+$ of primitive vectors in $V=V(1/2)\otimes V(-1)$ is given by
\begin{align*}
C^{(1)}&\mapsto 2\lambda\mu=2\left(\frac{3}{2}\right)\left(-\frac{3}{2}\right)=-\frac{9}{2} \tag{see Example 4.2 in \cite{hartwigDiagonalReductionAlgebra2022a}}
\\
C^{(2)}&\mapsto \mu^2+\lambda^2 = \frac{9}{4}+\frac{9}{4}=\frac{9}{2}\\ 
Q^{(2)}&\mapsto (\mu^2-\lambda^2)(-1)^{|\cdot|}  = (\frac{9}{4}-\frac{9}{4})(-1)^{|\cdot|}=0 \tag{see Example 4.3 in \cite{hartwigDiagonalReductionAlgebra2022a}}
\end{align*}
From \eqref{eq:la-mu-dim}, we also obtain that $\dim V^{+} = -\mu\mp\la \in \{0,3\}$; necessarily, we take $+$ for $\mp$ and arrive at $\dim V^{+} = 3$ (see  \cite[Example 3.1]{hartwigDiagonalReductionAlgebra2022a} for explicit expressions of a basis of $V^{+}$).

Now $\bar{\mathcal{C}}_{-}$ acts by $\frac{1}{4} \la\mu = -\frac{9}{16}$ on $V^{+}$, and $C\otimes 1-1\otimes C$ acts by $-\frac{9}{16}$ on $V$;  likewise, $\bar{\mathcal{C}}_{+}$ acts by $\frac{1}{8} (\la^2+\mu^2) = \frac{9}{16}$ on $V^{+}$, and $C\otimes 1 + 1\otimes C$ acts by $\frac{9}{16}$ on $V$.
\end{Example}

\begin{Example}[$\xi_1=1/2$, $\xi_2=-\ell\in \Z_{<0}$]\label{ex:FiveThree}
We now consider $V=V(1/2)\otimes V(-\ell)$,
generalizing the previous example, with $V(\xi_1)\cong\C[x]$ and $V(-\ell)$ as the unique finite-dimensional irreducible representation of $\Fosp(1|2)$ of dimension $2\ell+1$.

Here $\la=\frac{1}{2}+\ell$ and $\mu=\frac{1}{2}-\ell-1.$
Again by \eqref{eq:la-mu-dim}, the space $V^+$ of primitive vectors in $\C[x]\otimes V(-\ell)$ has dimension
\begin{equation}\label{eq:primDimPolyTensor}
\dim V^{+} = -\mu\mp\la = -(\frac{1}{2}-\ell-1) \mp (\frac{1}{2}+\ell). 
\end{equation}
If $\mp=-$, then Equation \eqref{eq:primDimPolyTensor} is $0$, in contradiction with the existence of the primitive vector $v_{1} \otimes v_{2}$ formed from the nonzero $v_{1}, v_{2}$.

So we must take $+$ for $\mp$ in Equation \eqref{eq:primDimPolyTensor}, which gives
\[\dim V^{+} = -(\frac{1}{2}-\ell-1) + (\frac{1}{2}+\ell) =2\ell+1 = \dim V(-\ell).\]
\end{Example}

Recall that $V^{+}$ is an irreducible $A$-representation \cite[Proposition 2.2]{hartwigDiagonalReductionAlgebra2022a} upon which $(H-n)$, $n \in \Z$, acts invertibly. This fact plays a role in proving Theorem \ref{thm:MainTheorem3}. 
\begin{proof}[Proof of Theorem \ref{thm:MainTheorem3}] 
Let $v_\ell$ be a highest weight vector in $V(-\ell)$, characterized by $h.v_{\ell}=-\ell v_\ell$ and $x_\al.v_\ell=0$. Then $w=1\otimes v_\ell$ is a primitive vector in $V = \C[x]\otimes V(-\ell)$. We have from Example \ref{ex:FiveThree} that $(H-1).w = \mu w$ with $\mu=\frac{1}{2}-\ell-1$; and, 
$\bar h.w = \lambda w$, with $\lambda=\frac{1}{2}+\ell$.
Since $\lambda \neq 0$, we use the same reasoning of \eqref{eq:Mlambdanot0-basis} to determine that a basis for the space of primitive vectors $V^+$ is the set
$\{ \bar x_{-\al}^j.w\mid 0\le j\le 2\ell\}$.
As in the discussion in the beginning of this section, we use the characterization of $w \in V^{+}$ to write the action
$\bar x_{-\al}^j.w = (P\tilde x_{-\al}+I)^j.w=(P\tilde x_{-\al})^j.w$. By Proposition \ref{prp:tildestuff} we arrive at
\begin{equation}
\bar x_{-\al}^j.w = 
\big( \tilde x_{-\al} + \varphi_1(H)X_{-\al}\tilde h+\varphi_2(H)X_{-\al}^2\tilde x_\al-2\varphi_3(H)X_{-\al}^3\tilde x_{2\al}\big)^j.w,
\end{equation}
and after substitutions for $\varphi_{1}(H), \varphi_{2}(H), \varphi_{3}(H)$  made via \cite[Section 3.2]{hartwigDiagonalReductionAlgebra2022a}, the result is \eqref{eq:polytensordecomposition}:
\begin{equation*}
 \C[x]\otimes V(-\ell) = \bigoplus_{j=0}^{2\ell} U(\Fn_-).\big( \tilde x_{-\al} - \frac{1}{H-1}X_{-\al}\tilde h-\frac{1}{H-1}X_{-\al}^2\tilde x_\al-\frac{2}{(H-2)(H-1)}X_{-\al}^3\tilde x_{2\al}\big)^j.(1\otimes v_\ell).
\end{equation*}
\end{proof}

\renewcommand\appendix{\par
 \setcounter{section}{0}%
  \renewcommand{\thesection}{\Alph{section}}
}

\renewcommand{\theHsection}{A\arabic{section}}

\appendix

\section{Proof of technical lemma}\label{appendix:A}

In this appendix we prove Lemma \ref{lem:tech}.
To simplify notation, we one again omit bars over the variables. Thus we write $x_\be$ for the element $\bar x_\be=x_\be\otimes 1-1\otimes x_\be+\textup{\Romanbar{II}}$ in the diagonal reduction algebra $U/\textup{\Romanbar{II}}$.

\begin{Lemma}
$F_n(H,\hat h)$ satisfies the difference-recursion relation
\begin{equation}\label{eq:66}
F_{n+1}(H,\hat h) = \frac{-H}{H-1}F_n(H-1,\hat h) + \frac{-H(H-2)(H-3)^2}{H-1}\frac{1}{\hat h^2} F_n(H-1,\hat h)F_{n-1}(H-2,\hat h) - \frac{1}{H(H-1)^2}\hat h^2 + H
\end{equation}
with initial conditions 
\begin{equation}\label{eq:66-init}
F_0(H,\hat h)=0,\qquad F_1(H,\hat h)=H-\frac{1}{H(H-1)^2}\hat h^2.
\end{equation}
\end{Lemma}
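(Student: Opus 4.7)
The plan is to verify the initial conditions \eqref{eq:66-init} by direct computation and then establish the recursion \eqref{eq:66} by induction on $n$. For the base cases, $\bar x_\al=\bar x_\al \bar x_{-\al}^0\in A\bar x_\al$ forces $F_0=0$. For $F_1$, apply \eqref{eq:alphaD-alpha} to $\bar x_\al \bar x_{-\al}$: since $\bar x_{-\al}\bar x_\al\in A\bar x_\al$ and $\bar x_{-2\al}\bar x_{2\al}\in A\bar x_{2\al}$ both vanish modulo the left ideal $L:=A\bar x_\al+A\bar x_{2\al}$, only $-\tfrac{1}{H}\bar h^2+H$ survives; rewriting $\bar h=\hat h/(H-1)$ recovers $F_1(H,\hat h)=H-\hat h^2/[H(H-1)^2]$.

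For the inductive step with $n\ge 1$, I would start from $\bar x_\al \bar x_{-\al}^{n+1}=(\bar x_\al \bar x_{-\al})\bar x_{-\al}^n$ and apply \eqref{eq:alphaD-alpha}, producing four summands whose reductions modulo $L$ will correspond in order to the four terms of \eqref{eq:66}. Three are immediate. Using the inductive hypothesis together with the shift $\bar x_{-\al}f(H)=f(H-1)\bar x_{-\al}$ from \eqref{eq:xf} and centrality of $\hat h$, the summand $-\tfrac{H}{H-1}\bar x_{-\al}\bar x_\al \bar x_{-\al}^n$ reduces to $-\tfrac{H}{H-1}F_n(H-1,\hat h)\bar x_{-\al}^n$; the summand $-\tfrac{1}{H}\bar h^2\bar x_{-\al}^n$ becomes $-\tfrac{\hat h^2}{H(H-1)^2}\bar x_{-\al}^n$; and $H\bar x_{-\al}^n$ is already in the desired form.

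The hard part will be the remaining summand $\tfrac{4H}{(H-1)(H-2)}\bar x_{-2\al}\bar x_{2\al}\bar x_{-\al}^n$. I would work formally in the localization $A[\hat h^{-1}]$, which is legitimate because $\hat h$ is central and because $F_n F_{n-1}$ is always divisible by $\hat h^2$ (exactly one of $n,n-1$ is even, and for even indices $F_m$ is $\hat h^2$ times a rational function of $H$ by \eqref{eq:Fn-formula}), guaranteeing the final coefficient lies in $R[\hat h]$. First, using \eqref{eq:alphaDalpha} I would rewrite $\bar x_{2\al}=\tfrac{H(H-1)}{2\hat h}\bar x_\al^2$, and commute through $\bar x_{-2\al}$ via $\bar x_{-2\al}f(H)=f(H-2)\bar x_{-2\al}$ to obtain $\tfrac{(H-2)(H-3)}{2\hat h}\bar x_{-2\al}\bar x_\al^2 \bar x_{-\al}^n$. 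Two successive applications of the inductive hypothesis give $\bar x_\al^2 \bar x_{-\al}^n\equiv F_n(H+1,\hat h)F_{n-1}(H,\hat h)\bar x_{-\al}^{n-2}$ modulo $L$. Commuting these coefficients through $\bar x_{-2\al}$ produces $\tfrac{(H-2)(H-3)}{2\hat h}F_n(H-1,\hat h)F_{n-1}(H-2,\hat h)\bar x_{-2\al}\bar x_{-\al}^{n-2}$. Finally, combining \eqref{eq:-alphaD-alpha} with $\bar x_{-2\al}\bar h=\tfrac{\hat h}{H-3}\bar x_{-2\al}$ (which follows from \eqref{eq:hD-2alpha} and $\hat h=(H-1)\bar h$) yields $\bar x_{-\al}^2=-\tfrac{2\hat h}{(H-2)(H-3)}\bar x_{-2\al}$, whence $\bar x_{-2\al}\bar x_{-\al}^{n-2}=-\tfrac{(H-2)(H-3)}{2\hat h}\bar x_{-\al}^n$ in $A[\hat h^{-1}]$. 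Substituting yields exactly the missing term $-\tfrac{H(H-2)(H-3)^2}{(H-1)\hat h^2}F_n(H-1,\hat h)F_{n-1}(H-2,\hat h)\bar x_{-\al}^n$ of \eqref{eq:66}, and summing all four contributions completes the induction. The chief subtleties are the $H$-shift bookkeeping when moving coefficients past generators and the formal inversion of $\hat h$, both of which are handled by the centrality of $\hat h$ and the explicit $\hat h^2$-divisibility noted above.
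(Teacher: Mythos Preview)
Your overall strategy is essentially the paper's: start from \eqref{eq:alphaD-alpha} applied to $\bar x_\al\bar x_{-\al}^{n+1}$, and reduce the troublesome $\bar x_{-2\al}\bar x_{2\al}\bar x_{-\al}^n$ term by converting $\bar x_{2\al}$ to $\bar x_\al^2$ via \eqref{eq:alphaDalpha}, applying the inductive hypothesis twice, and then converting $\bar x_{-2\al}$ back to $\bar x_{-\al}^2$ via \eqref{eq:-alphaD-alpha}. The shift bookkeeping is all correct.

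There is, however, one genuine gap. To justify that your final coefficient lies in $R[\hat h]$ after working in $A[\hat h^{-1}]$, you invoke \eqref{eq:Fn-formula} to argue that $\hat h^2\mid F_nF_{n-1}$. But \eqref{eq:Fn-formula} is precisely the explicit formula that this recursion lemma is designed to establish (the appendix derives the closed form for $F_n$ from the recursion, not the other way around). So as written, the argument is circular. The paper avoids this by never inverting $\hat h$: instead of writing $\bar x_{2\al}=\tfrac{H(H-1)}{2\hat h}\bar x_\al^2$, it multiplies the whole expression for $\bar x_\al\bar x_{-\al}^{n+1}$ by $\tfrac{-4\hat h^2}{(H-2)^2(H-3)^2}$ and uses $\bar x_{-\al}^2=\tfrac{-2\hat h}{(H-2)(H-3)}\bar x_{-2\al}$ directly. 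Comparing coefficients of $\bar x_{-\al}^n$ via the PBW basis then \emph{forces} $\hat h^2\mid F_n(H-1,\hat h)F_{n-1}(H-2,\hat h)$, because every other term visibly carries a factor of $\hat h^2$. You can repair your argument the same way: run your computation as a genuine identity in $A$ after clearing the $\hat h^2$ denominator, then read off the divisibility from PBW rather than from the (not-yet-proved) closed form.
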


\begin{proof}
We prove this by induction on $n$. For $n=0$, the result holds. For $n=1$, we have by relation \eqref{eq:alphaD-alpha}
\begin{equation}
x_\al x_{-\al} = -\frac{H}{H-1} x_{-\al}x_\al + \frac{4H}{(H-1)(H-2)}x_{-2\al}x_{2\al} - \frac{1}{H(H-1)^2}\hat h^2+H,
\end{equation}
which is congruent to $H-\frac{1}{H(H-1)^2}\hat h^2$ modulo $Ax_\al+Ax_{2\al}$.
Suppose that \eqref{eq:66} holds for some arbitrary $n\ge 1$. Then we have:
\begin{align}
x_\al x_{-\al}^{n+1} &\overset{\eqref{eq:alphaD-alpha}}{\equiv}
\Big(-\frac{H}{H-1} x_{-\al}x_\al + \frac{4H}{(H-1)(H-2)}x_{-2\al}x_{2\al} - \frac{1}{H(H-1)^2}\hat h^2+H
\Big) x_{-\al}^n\nonumber \\
&\hspace{-0.60em}\overset{\text{ind. hyp.}}{\equiv}
-\frac{H}{H-1}x_{-\al} F_n(H,\hat h) x_{-\al}^{n-1} +
\frac{4H}{(H-1)(H-2)}x_{-2\al}x_{2\al} x_{-\al}^n + (-\frac{1}{H(H-1)^2}\hat h^2+H)x_{-\al}^n.
\label{eq:59}
\end{align}
By \eqref{eq:alphaDalpha},
\begin{equation}\label{eq:60}
x_\al^2 = \frac{2\hat h}{H(H-1)} x_{2\al}.
\end{equation}
Thus,
\begin{align}
\frac{2\hat h}{H(H-1)} x_{2\al} x_{-\al}^n &\overset{\eqref{eq:60}}{=} x_\al^2x_{-\al}^n \nonumber\\
&\overset{\eqref{eq:59}}{\equiv} x_\al F_n(H,\hat h) x_{-\al}^{n-1} \nonumber\\
&\overset{\eqref{eq:xf}}{=} F_n(H+1,\hat h)x_\al x_{-\al}^{n-1} \nonumber\\
&\hspace{-.60em}\overset{\text{ind. hyp.}}{\equiv} F_n(H+1,\hat h) F_{n-1}(H,\hat h)x_{-\al}^{n-2}.\label{eq:61}
\end{align}
Note that \eqref{eq:61} should be interpreted as zero when $n=1$.
Applying $\Theta$ to \eqref{eq:60} we get
\begin{equation}\label{eq:63}
x_{-\al}x_{-\al} = \frac{-2\hat h}{(H-2)(H-3)}x_{-2\al}.
\end{equation}
Combining these facts we have
\begin{align}
\frac{-4\hat h^2}{(H-2)^2(H-3)^2} x_\al x_{-\al}^{n+1}
&\hspace{-.35em}\overset{\eqref{eq:59},\eqref{eq:61}}{\equiv}
\frac{-4\hat h^2}{(H-2)^2(H-3)^2} \frac{-H}{H-1} F_n(H-1,\hat h) x_{-\al}^n \nonumber\\
&\qquad+
\frac{4H}{(H-1)(H-2)}\frac{-2\hat h}{(H-2)(H-3)}x_{-2\al}F_n(H+1,\hat h)F_{n-1}(H,\hat h) x_{-\al}^{n-2} \nonumber\\&\qquad+ \frac{-4\hat h^2}{(H-2)^2(H-3)^2} (-\frac{1}{H(H-1)^2}\hat h^2+H)x_{-\al}^n
\nonumber\\
&\hspace{.75em}\overset{\eqref{eq:63}}{=}
\Big[
\frac{-4\hat h^2}{(H-2)^2(H-3)^2} \frac{-H}{H-1} F_n(H-1,\hat h) \nonumber\\
&\qquad+
\frac{4H}{(H-1)(H-2)}F_n(H-1,\hat h)F_{n-1}(H-2,\hat h) \nonumber\\&\qquad+ \frac{-4\hat h^2}{(H-2)^2(H-3)^2} (-\frac{1}{H(H-1)^2}\hat h^2+H)\Big]x_{-\al}^n.\label{eq:64}
\end{align}
By the PBW theorem, \eqref{eq:64} implies that $\hat h^2$ divides $F_n(H-1,\hat h)F_{n-1}(H-2,\hat h)$. Using that $x_\al x_{-\al}^{n+1}\equiv F_{n+1}(H,\hat h)x_{-\al}^n$, \eqref{eq:64} implies \eqref{eq:66}.
\end{proof}

\begin{Lemma}
The degree of $F_n(H,\hat h)$ as a polynomial in $\hat h$ with coefficients in $R$, is at most two.
\end{Lemma}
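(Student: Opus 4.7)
The plan is to prove the degree bound by induction on $n$, leveraging the recursion \eqref{eq:66} established in the previous lemma.

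For the base cases $n=0, 1$, the initial conditions \eqref{eq:66-init} give $F_0(H,\hat h) = 0$ and $F_1(H,\hat h) = H - \hat h^2/(H(H-1)^2)$, both of which visibly have $\hat h$-degree at most $2$.

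For the inductive step, assuming $\deg_{\hat h} F_k(H,\hat h)\le 2$ for all $k\le n$, I would read off the degree of each of the four summands on the right-hand side of \eqref{eq:66}:
\begin{itemize}
\item The summand $\tfrac{-H}{H-1}F_n(H-1,\hat h)$ has $\hat h$-degree at most $2$ by the inductive hypothesis.
\item The summands $-\tfrac{1}{H(H-1)^2}\hat h^2$ and $H$ together have $\hat h$-degree at most $2$.
\item The summand $\tfrac{-H(H-2)(H-3)^2}{(H-1)\hat h^2} F_n(H-1,\hat h)F_{n-1}(H-2,\hat h)$ is the crux: although the product $F_n(H-1,\hat h)F_{n-1}(H-2,\hat h)$ has $\hat h$-degree at most $4$ by the inductive hypothesis, we must argue that $\hat h^2$ divides this product so that the division yields a polynomial of $\hat h$-degree at most $2$.
\end{itemize}

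The main (and only) obstacle is therefore the divisibility claim $\hat h^2 \mid F_n(H-1,\hat h)F_{n-1}(H-2,\hat h)$. Fortunately, this divisibility is already established in the proof of the preceding lemma: the line following \eqref{eq:64} records exactly that consequence, derived from the PBW theorem applied to the identity \eqref{eq:64} (since the right-hand side of that identity must lie in $R[\hat h]\cdot x_{-\al}^n$, no negative $\hat h$-powers can survive, forcing $\hat h^2$ to divide the bracketed expression and hence the claimed product). Invoking this fact, the third summand is a genuine polynomial in $\hat h$ of degree at most $2$, completing the induction.
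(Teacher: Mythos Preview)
Your proposal is correct and follows exactly the paper's approach: induction on $n$ using the recursion \eqref{eq:66}, with the base cases handled by \eqref{eq:66-init}. The paper's own proof is a two-line sketch of this same argument, and your more detailed write-up (in particular, isolating the divisibility $\hat h^2\mid F_n(H-1,\hat h)F_{n-1}(H-2,\hat h)$ established after \eqref{eq:64}) simply unpacks what the paper leaves implicit.
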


\begin{proof}
For $n=0,1$, the claim is true by \eqref{eq:66-init}. 
For $n>1$, it follows by induction on $n$, using the recursion relation \eqref{eq:66}.
\end{proof}

Write
\begin{equation}\label{eq:F2}
F_n(H,\hat h) = c_n^0(H)+c_n^1(H)\hat h+c_n^2(H)\hat h^2.
\end{equation}

\begin{Lemma}\label{lem:c1}
$c_n^1(H)=0$ for all $n\ge 0$.
\end{Lemma}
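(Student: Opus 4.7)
The plan is a straightforward double induction on $n$, exploiting the fact that the recursion \eqref{eq:66} preserves the property of being an even polynomial in $\hat h$.

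For the base cases $n=0$ and $n=1$, the initial conditions \eqref{eq:66-init} give $F_0(H,\hat h)=0$ and $F_1(H,\hat h)=H-\tfrac{1}{H(H-1)^2}\hat h^2$, both of which contain only the powers $\hat h^0$ and $\hat h^2$. Hence $c_0^1(H)=c_1^1(H)=0$.

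For the inductive step, suppose $c_{n-1}^1(H)=c_n^1(H)=0$ for some $n\ge 1$, so that both $F_{n-1}(H,\hat h)$ and $F_n(H,\hat h)$ are polynomials in $\hat h^2$ with coefficients in $R$. I inspect each summand on the right-hand side of \eqref{eq:66} separately: the summand $\tfrac{-H}{H-1}F_n(H-1,\hat h)$ is visibly a polynomial in $\hat h^2$; the product $F_n(H-1,\hat h)F_{n-1}(H-2,\hat h)$ is a polynomial in $\hat h^2$ since each factor is, and by the preceding lemma this product is divisible by $\hat h^2$, so $\tfrac{-H(H-2)(H-3)^2}{H-1}\tfrac{1}{\hat h^2}F_n(H-1,\hat h)F_{n-1}(H-2,\hat h)$ is again a polynomial in $\hat h^2$; and the remaining terms $-\tfrac{1}{H(H-1)^2}\hat h^2+H$ contain only $\hat h^0$ and $\hat h^2$. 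Adding these, $F_{n+1}(H,\hat h)$ is a polynomial in $\hat h^2$, which means $c_{n+1}^1(H)=0$. This closes the induction.

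There is no real obstacle here beyond bookkeeping; the content is simply that \eqref{eq:66} is parity-preserving in $\hat h$, and the initial data lies in the even part. The only subtle point to keep in mind is that the division by $\hat h^2$ in the middle term produces a genuine polynomial, which was already established, so no fractional powers of $\hat h$ are introduced into $F_{n+1}$.
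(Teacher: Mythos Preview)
Your proof is correct and follows essentially the same route as the paper's: both argue by induction that the recursion \eqref{eq:66} preserves parity in $\hat h$, so that $F_{n+1}$ lies in $R[\hat h^2]$ whenever $F_n$ and $F_{n-1}$ do. The paper carries this out by extracting the coefficient of $\hat h$ explicitly (obtaining a recursion for $c_{n+1}^1$ in terms of $c_n^1$ and $c_{n-1}^1$), whereas you phrase it more abstractly, but the content is identical. One minor remark: the divisibility of $F_n(H-1,\hat h)F_{n-1}(H-2,\hat h)$ by $\hat h^2$ is established inside the \emph{proof} of the recursion lemma rather than in the degree-bound lemma immediately preceding this one, so ``the preceding lemma'' is a slight mislabeling; but as you note, this divisibility is needed for \eqref{eq:66} to make sense at all, so there is no gap.
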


\begin{proof}
The statement is true for $n=0,1$ by \eqref{eq:66-init}. Identifying the coefficients of $\hat h$ in both sides of \eqref{eq:66} we obtain
\begin{equation}
c_{n+1}^1(H)=\frac{-H}{H-1}c_n^1(H-1)+\frac{-H(H-2)(H-3)^2}{H-1}\Big(c_n^1(H-1)c_{n-1}^2(H-2)+c_n^2(H-1)c_{n-1}^1(H-2)\Big).
\end{equation}
Thus the claim follows by induction on $n$.
\end{proof}

\begin{Lemma}\label{lem:c2}
For $n\ge 0$ we have
\begin{equation}\label{eq:c2}
c_n^2(H)=\begin{cases}
\displaystyle \frac{1}{H(H-1)^2}\Big(\frac{H^2}{(H-n)^2}-1\Big),&\text{$n$ even,}\\
\displaystyle \frac{-1}{H(H-1)^2},&\text{$n$ odd.}
\end{cases}
\end{equation}
\end{Lemma}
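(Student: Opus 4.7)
The plan is to prove the formula for $c_n^2(H)$ by strong induction on $n$, after first extracting from the recursion \eqref{eq:66} a closed-form recursion for the top coefficient $c_n^2$.

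First I would substitute the expansion \eqref{eq:F2}, which by Lemma \ref{lem:c1} simplifies to $F_n(H,\hat h)=c_n^0(H)+c_n^2(H)\hat h^2$, into the recursion \eqref{eq:66}. Computing
\[
F_n(H-1,\hat h)F_{n-1}(H-2,\hat h)=c_n^0(H-1)c_{n-1}^0(H-2)+\big(\text{lower}\big)\hat h^2+c_n^2(H-1)c_{n-1}^2(H-2)\hat h^4
\]
and dividing by $\hat h^2$, the $\hat h^2$-coefficient of this product comes solely from the $\hat h^4$ term, namely $c_n^2(H-1)c_{n-1}^2(H-2)$. Comparing $\hat h^2$ coefficients on both sides of \eqref{eq:66} yields the simpler recursion
\begin{equation}\label{eq:c2rec}
c_{n+1}^2(H)=\frac{-H}{H-1}c_n^2(H-1)-\frac{H(H-2)(H-3)^2}{H-1}c_n^2(H-1)c_{n-1}^2(H-2)-\frac{1}{H(H-1)^2}.
\end{equation}

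Next I verify the base cases $n=0,1$ directly from \eqref{eq:66-init}: $c_0^2(H)=0$ agrees with $\frac{1}{H(H-1)^2}\bigl(\frac{H^2}{H^2}-1\bigr)=0$, and $c_1^2(H)=-\frac{1}{H(H-1)^2}$ matches the odd case of \eqref{eq:c2}. Then I run strong induction on $n$, splitting into two cases by parity. If $n$ is odd, then $n-1$ is even and $n+1$ is even, so by the inductive hypothesis I substitute
\[
c_n^2(H-1)=\frac{-1}{(H-1)(H-2)^2},\qquad c_{n-1}^2(H-2)=\frac{1}{(H-2)(H-3)^2}\!\left(\frac{(H-2)^2}{(H-n-1)^2}-1\right)
\]
into \eqref{eq:c2rec} and simplify; the two middle terms produce $\frac{H}{(H-1)^2(H-n-1)^2}\cdot$(difference of squares) and, combined with $-\frac{1}{H(H-1)^2}$, collapse to $\frac{1}{H(H-1)^2}\bigl(\frac{H^2}{(H-n-1)^2}-1\bigr)$, the even case. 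If $n$ is even, $n-1$ is odd and $n+1$ is odd; the analogous substitution shows that the two contributions involving $\bigl(\frac{(H-1)^2}{(H-1-n)^2}-1\bigr)$ cancel, leaving only $-\frac{1}{H(H-1)^2}$, the odd case. Both computations are routine telescopings whose key feature is the factor $(H-2)^2(H-3)^2$ cancelling cleanly with the denominators carried by the inductive expressions for $c_n^2$ and $c_{n-1}^2$.

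There is no real obstacle, only bookkeeping: the slight subtlety is ensuring that the $\hat h^{-2}$ term of $\frac{1}{\hat h^2}F_n(H-1,\hat h)F_{n-1}(H-2,\hat h)$ vanishes so that \eqref{eq:c2rec} correctly captures the $\hat h^2$-coefficient of \eqref{eq:66}. This is already built in, since the previous lemma establishing $\deg_{\hat h}F_{n+1}\le 2$ forced $\hat h^2\mid F_n(H-1,\hat h)F_{n-1}(H-2,\hat h)$ via \eqref{eq:64}; one may also verify it a posteriori from the explicit closed forms, as $c_n^0(H)$ vanishes whenever $n$ is even (so in the product $c_n^0(H-1)c_{n-1}^0(H-2)$, one of the two factors is zero regardless of the parity of $n$).
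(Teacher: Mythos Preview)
Your proof is correct and follows essentially the same approach as the paper: extract the $\hat h^2$-coefficient from \eqref{eq:66} to obtain the recursion \eqref{eq:c2rec} (which is exactly the paper's \eqref{eq:c2pf}), then verify the closed form by induction split according to parity. The only organizational difference is that you run a single strong induction using both $c_n^2$ and $c_{n-1}^2$, whereas the paper first proves the odd case by an induction that needs only the previous odd value (the terms involving $c_{2k}^2(H-1)$ cancel, so the even value is never used), and then derives the even case by a telescoping argument; this is a cosmetic difference, not a substantive one.
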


\begin{proof}
Identifying the coefficients of $\hat h^2$ in both sides of \eqref{eq:66} we obtain
\begin{equation}\label{eq:c2pf}
c_{n+1}^2(H)=\frac{-H}{H-1}c_n^2(H-1)+\frac{-H(H-2)(H-3)^2}{H-1}c_n^2(H-1)c_{n-1}^2(H-2)-\frac{1}{H(H-1)^2}.
\end{equation}
Suppose $n$ is odd. By \eqref{eq:66-init} we have $c_1^2(H)=\frac{-H}{H(H-1)^2}$. Assume that \eqref{eq:c2} holds for $n=2k-1$ where $k>0$. Then, taking $n=2k$ in \eqref{eq:c2pf}, we get
\begin{align*}
c_{2k+1}^2(H)&=\frac{-H}{H-1}c_{2k}^2(H-1)+\frac{-H(H-2)(H-3)^2}{H-1}c_{2k}^2(H-1)\frac{-1}{(H-2)(H-3)^2}-\frac{1}{H(H-1)^2}\\
&=-\frac{1}{H(H-1)^2}.
\end{align*}
By induction, \eqref{eq:c2} holds for all odd $n$. Suppose $n$ is even. For $n=0$, we have $F_0(H,\hat 0)=0$ so $c_0^2(H)=0$, in agreement with \eqref{eq:c2}. Taking $n=2k+1$ in \eqref{eq:c2pf} and using that $c_{2k+1}^2(H)=\frac{-1}{H(H-1)^2}$ we get
\[
c_{2k+2}^2(H)=\frac{-H}{H-1}\frac{-1}{(H-1)(H-2)^2}+\frac{-H(H-2)(H-3)^2}{H-1}\frac{-1}{(H-1)(H-2)^2}c_{2k}^2(H-2)-\frac{1}{H(H-1)^2}
\]
Multiplying both sides by $H(H-1)^2$ and putting $\hat c_n^2(H)=H(H-1)^2c_n^2(H)$ for all $n$ we get
\[
\hat c_{2k+2}^2(H) = \frac{H^2}{(H-2)^2}+\frac{H^2}{(H-2)^2} \hat c_{2k}^2(H-2)-1
\]
or, equivalently,
\[
\hat c_{2k+2}^2(H)+1 = \frac{H^2}{(H-2)^2}\big(\hat c_{2k}^2(H-2)+1\big).
\]
Iterating this formula and using that $c_0^2(H)=0$, we obtain that for all $k\ge 0$,
\[
\hat c_{2k}^2(H)+1=\frac{H^2}{(H-2k)^2}.
\]
Thus for all $k\ge 0$,
\[
c_{2k}^2(H) = \frac{1}{H(H-1)^2}\Big(\frac{H^2}{(H-2k)^2}-1\Big),
\]
proving \eqref{eq:c2} for even $n$.
\end{proof}

It remains to determine the constant term $c_n^0(H)$ of $F_n(H,\hat h)$.

\begin{Lemma}\label{lem:c0}
\begin{equation}\label{eq:c0}
c_n^0(H)=
\begin{cases}
\displaystyle 0,& \text{$n$ even,}\\
\displaystyle H\frac{(H-n)^2}{(H-1)^2},&\text{$n$ odd.}
\end{cases}
\end{equation}
\end{Lemma}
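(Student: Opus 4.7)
The plan is to proceed by induction on $n$, extracting the constant-in-$\hat h$ coefficient of the difference recursion \eqref{eq:66}, and using the closed formulas already established for $c_n^1(H)=0$ (Lemma \ref{lem:c1}) and $c_n^2(H)$ (Lemma \ref{lem:c2}). The base cases $n=0,1$ are immediate from the initial conditions \eqref{eq:66-init}: $F_0=0$ gives $c_0^0(H)=0$ (the even case), and $F_1(H,\hat h)=H-\frac{1}{H(H-1)^2}\hat h^2$ gives $c_1^0(H)=H=H(H-1)^2/(H-1)^2$ (the odd case with $n=1$).

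For the inductive step, the main mechanical task is to isolate the constant term of the right-hand side of \eqref{eq:66} as a polynomial in $\hat h$. Since $c_n^1\equiv 0$, one has
\[F_n(H-1,\hat h)F_{n-1}(H-2,\hat h) = c_n^0(H-1)c_{n-1}^0(H-2) + \bigl[c_n^0(H-1)c_{n-1}^2(H-2)+c_n^2(H-1)c_{n-1}^0(H-2)\bigr]\hat h^2 + c_n^2(H-1)c_{n-1}^2(H-2)\hat h^4,\]
and by the inductive hypothesis one of $n-1,n$ is even, so $c_n^0(H-1)c_{n-1}^0(H-2)=0$. This both confirms the $\hat h^2$-divisibility invoked after \eqref{eq:64} and makes the constant-in-$\hat h$ part of \eqref{eq:66} read
\[c_{n+1}^0(H) = \tfrac{-H}{H-1}c_n^0(H-1) + \tfrac{-H(H-2)(H-3)^2}{H-1}\bigl[c_n^0(H-1)c_{n-1}^2(H-2)+c_n^2(H-1)c_{n-1}^0(H-2)\bigr] + H.\]

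I would then split into cases by parity of $n$. If $n$ is odd (so $n+1$ is even and we want $c_{n+1}^0(H)=0$), the hypothesis gives $c_{n-1}^0(H-2)=0$ and $c_n^0(H-1)=(H-1)(H-1-n)^2/(H-2)^2$, while $c_{n-1}^2(H-2)$ is supplied by \eqref{eq:c2}; substitution and cancellation should collapse the expression to $-H+H=0$. If $n$ is even (so $n+1$ is odd and we want $c_{n+1}^0(H)=H(H-n-1)^2/(H-1)^2$), then $c_n^0(H-1)=0$ and the only surviving contribution is the one from $c_n^2(H-1)c_{n-1}^0(H-2)$, which after plugging in the formulas should simplify directly to the claimed rational function plus the $+H$ term.

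The only real obstacle is algebraic bookkeeping: tracking the shifts $H\to H-1,\,H-2$ and cancellations among the factors $(H-3)^2,(H-2)^2,(H-1)^2$ appearing in both $c_n^2$ and $c_n^0$. However, in both parity cases the simplification is driven by the single identity $\bigl(\tfrac{H^2}{(H-k)^2}-1\bigr)(H-k)^2=H^2-(H-k)^2$ (applied with appropriate shifts of $H$), which causes the apparent poles in $c_n^2(H-1)$ to cancel against the $(H-n-1)^2$ numerator coming from $c_{n-1}^0(H-2)$ (and symmetrically in the odd case). Once this is observed, the recursion reduces to a one-line verification in each case.
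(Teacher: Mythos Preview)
Your proposal is correct and follows essentially the same route as the paper: both extract the constant-in-$\hat h$ term of the recursion \eqref{eq:66} to obtain the same formula \eqref{eq:c0-pf} and then split by parity. The only organizational difference is that the paper first derives a step-2 recursion for $c_{2k+1}^0$ in terms of $c_{2k-1}^0$ (independently of the even case) and solves it via the normalization $\tilde c_n^0=\frac{(H-1)^2}{H}c_n^0$, whereas your simultaneous induction substitutes both parity hypotheses directly---this is arguably tidier, since your cancellation $(\frac{(H-2)^2}{(H-n-1)^2}-1)(H-n-1)^2=(H-2)^2-(H-n-1)^2$ does in one line what the paper's auxiliary sequence accomplishes.
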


\begin{proof}
Taking the constant term in both sides of \eqref{eq:66} we get
\begin{equation}\label{eq:c0-pf}
c_{n+1}^0(H)=\frac{-H}{H-1}c_n^0(H-1)+\frac{-H(H-2)(H-3)^2}{H-1}\big(c_n^0(H-1)c_{n-1}^2(H-2)+c_n^2(H-1)c_{n-1}^0(H-2)\big)+H.
\end{equation}
Suppose first that $n=2k$. Then, using \eqref{eq:c2}, we get
\begin{align*}
c_{2k+1}^0(H)&=
\frac{-H}{H-1}c_{2k}^0(H-1)+\frac{-H(H-2)(H-3)^2}{H-1}\Big[c_{2k}^0(H-1)\frac{-1}{(H-2)(H-3)^2}\\ &\quad+\frac{1}{(H-1)(H-2)^2}\Big(\frac{(H-1)^2}{(H-1-2k)^2}-1\Big)c_{2k-1}^0(H-2)\Big]+H\\
&=\frac{-H(H-3)^2}{(H-1)^2(H-2)}\Big(\frac{(H-1)^2}{(H-1-2k)^2}-1\Big)c_{2k-1}^0(H-2)+H
\end{align*}
which, after multiplying by $(H-1)^2/H$ and putting $\tilde c_n^0(H)=\frac{(H-1)^2}{H}c_n^0(H)$ for all $n$, can be written
\begin{equation}\label{eq:c0-odd}
\tilde c_{2k+1}^0(H) = \Big(1-\frac{(H-1)^2}{\big(H-(2k+1)\big)^2}\Big)\tilde c_{2k-1}^0(H-2)+(H-1)^2.
\end{equation}
We use this to show by induction on $k$ that
\begin{equation}\label{eq:tilde-c0}
\tilde c_{2k+1}^0(H)=\big(H-(2k+1)\big)^2.
\end{equation}
Indeed, by \eqref{eq:66-init}, $c_1^0(H)=H$ so that $\tilde c_1^0(H)=(H-1)^2$, which proves \eqref{eq:tilde-c0} for $k=0$. For $k>0$,
\begin{align*}
\tilde c_{2k+1}^0(H)&\overset{\eqref{eq:c0-odd}}{=} \Big(1-\frac{(H-1)^2}{\big(H-(2k+1)\big)^2}\Big)\tilde c_{2k-1}^0(H-2)+(H-1)^2\\
&\hspace{-.25em}\overset{\text{ind.hyp.}}{=}\Big(1-\frac{(H-1)^2}{\big(H-(2k+1)\big)^2}\Big)\big(H-2-(2k-1)\big)^2 + (H-1)^2 \\
&\hspace{0.74em}= \big(H-(2k+1)\big)^2.
\end{align*}
Consequently, for all odd $n$,
\[
c_n^0(H)=\frac{H}{(H-1)^2}\tilde c_n^0(H)=H\frac{(H-n)^2}{(H-1)^2}
\]
as claimed.

That $c_{2k}^0(H)=0$ for all $k\ge 0$ follows from the fact that $\hat h^2$ divides $F_{2k+1}(H-1,\hat h)F_{2k}(H-2,\hat h)$ for all $k\ge 0$. Alternatively, take $n=2k+1$ in \eqref{eq:c0-pf} and use that $c_{2k+1}(H)=H\frac{(H-(2k+1))^2}{(H-1)^2}$ to get
\begin{align*}
c_{2k+2}^0(H)&=\frac{-H}{H-1}c_{2k+1}^0(H-1)+\frac{-H(H-2)(H-3)^2}{H-1}\Big[c_{2k+1}^0(H-1)c_{2k}^2(H-2)\\ &\quad+c_{2k+1}^2(H-1)c_{2k}^0(H-2)\Big]+H.
\end{align*}
After substituting \eqref{eq:c0} for $n=2k+1$ and \eqref{eq:c2} for $n=2k$ and $n=2k+1$, respectively, the previous equation simplifies to
\begin{equation}
c_{2k+2}^0(H) = \frac{H(H-3)^2}{(H-1)^2(H-2)}c_{2k}^0(H-2),
\end{equation}
which, together with the fact that $c_0^0(H)=0$, inductively shows that $c_{2k}^0(H)=0$ for all $k\ge 0$.
\end{proof}

Substituting \eqref{eq:F2} into \eqref{eq:Fn-congruencebar} using Lemmas \ref{lem:c1},\ref{lem:c2},\ref{lem:c0}, we obtain the proof of Lemma \ref{lem:tech}.

\printbibliography
\end{document}